\providecommand{\U}[1]{\protect\rule{.1in}{.1in}}
\newtheorem{theorem}{Theorem}
\theoremstyle{plain}
\newtheorem{corollary}{Corollary}
\newtheorem{definition}{Definition}
\newtheorem{example}{Example}
\newtheorem{lemma}{Lemma}
\newtheorem{problem}{Problem}
\newtheorem{remark}{Remark}
\numberwithin{equation}{section}
\begin{document}
\title[ ]{Choquet operators associated to vector capacities }
\author{Sorin G. Gal}
\address{Department of Mathematics and Computer Science\\
University of Oradea\\
University\ Street No. 1, Oradea, 410087, Romania}
\email{galsorin23@gmail.com}
\author{Constantin P. Niculescu}
\address{Department of Mathematics, University of Craiova\\
Craiova 200585, Romania}
\email{constantin.p.niculescu@gmail.com}
\thanks{Appears in J. Math. Anal. Appl. \textbf{500} (2021), Issue 2, Article No.
125153. DOI: 10.1016/j.jmaa.2021.125153.}
\date{April 12, 2021}
\subjclass[2000]{47H07, 28B15, 28C15, 46B42, 46G10,}
\keywords{Choquet integral, Choquet-Bochner integral, comonotonic additivity, monotonic
operator, vector capacity}

\begin{abstract}
The integral representation of Choquet operators defined on a space $C(X)$ is
established by using the Choquet-Bochner integral of a real-valued function
with respect to a vector capacity.

\end{abstract}
\maketitle

\section{Introduction}

Choquet's theory of integrability (as described by Denneberg \cite{Denn} and
Wang and Klir \cite{WK}) leads to a new class of nonlinear operators called in
\cite{GN2020b} \emph{Choquet operators} because they are defined by a mix of
conditions representative for Choquet's integral. Its technical definition is
detailed as follows.

Given a Hausdorff topological space $X,$ we will denote by $\mathcal{F}(X)$
the vector lattice of all real-valued functions defined on $X$ endowed with
the pointwise ordering. Two important vector sublattices of it are
\[
C(X)=\left\{  f\in\mathcal{F}(X):\text{ }f\text{ continuous}\right\}
\]
and
\[
C_{b}(X)=\left\{  f\in\mathcal{F}(X):\text{ }f\text{ continuous and
bounded}\right\}  .
\]
With respect to the sup norm, $C_{b}(X)$ becomes a Banach lattice. See the
next section for details concerning the ordered Banch spaces.

As is well known, all norms on the $N$-dimensional real vector space
$\mathbb{R}^{N}$ are equivalent. See Bhatia \cite{Bhatia2009}, Theorem 13, p.
16. When endowed with the sup norm and the coordinate wise ordering,
$\mathbb{R}^{N}$ can be identified (algebraically, isometrically and in order)
with the space $C\left(  \left\{  1,...,N\right\}  \right)  $, where $\left\{
1,...,N\right\}  $ carries the discrete topology.

Suppose that $X$ and $Y$ are two Hausdorff topological spaces and $E$ and $F$
are respectively ordered vector subspaces of $\mathcal{F}(X)$ and
$\mathcal{F}(Y).$ An operator $T:E\rightarrow F$ is said to be a \emph{Choquet
operator }(respectively a\emph{ Choquet functional when }$F=\mathbb{R}$) if it
satisfies the following three conditions:

\begin{enumerate}
\item[(Ch1)] (\emph{Sublinearity}) $T$ is subadditive and positively
homogeneous, that is,%
\[
T(f+g)\leq T(f)+T(g)\quad\text{and}\quad T(af)=aT(f)
\]
for all $f,g$ in $E$ and $a\geq0;$

\item[(Ch2)] (\emph{Comonotonic additivity}) $T(f+g)=T(f)+T(g)$ whenever the
functions $f,g\in E$ are comonotone in the sense that
\[
(f(s)-f(t))\cdot(g(s)-g(t))\geq0\text{ for all }s,t\in X;
\]

\item[(Ch3)] (\emph{Monotonicity}) $f\leq g$ in $E$ implies $T(f)\leq T(g).$
\end{enumerate}

The linear Choquet operators acting on ordered Banach spaces are nothing but
the linear and positive operators acting on these spaces; see Corollary 1.
While they are omnipresent in the various fields of mathematics, the nonlinear
Choquet operators are less visible, their study beginning with the\ seminal
papers of Schmeidler \cite{Schm86}, \cite{Schm89} in the 80's. An important
step ahead was done by the contributions of Zhou \cite{Zhou}, Marinacci and
Montrucchio \cite{MM2004} and Cerreia-Vioglio, Maccheroni, Marinacci and
Montrucchio \cite{CMMM2012}, \cite{CMMM2015}, which led to the study of
vector-valued Choquet operators in their own. Notice that other important
contributions concerning this topic were obtained by Mesiar \cite{Mes_1},
Mesiar-Li-Pap \cite{MP}, Ouyang-Li-Mesiar \cite{Mes_2} and Pap \cite{Pap}. See
also Gal-Niculescu \cite{GN2020a} and \cite{GN2020b}.

Interestingly, the condition of comonotonic additivity (the substitute for
additivity) lies at the core of many results concerning the real analysis.
Indeed, its meaning in the context of real numbers, can be easily understood
by identifying each real number $x$ with the linear function $\alpha_{x}(t)=x
t$, $t\in\mathbb{R}.$ As a consequence, two real numbers $x$ and $y$ are
comonotone if and only if the functions $\alpha_{x}$ and $\alpha_{y}$ are
comonotone, equivalently, if either both numbers are nonnegative or both are
nonpositive. This yields the simplest example of Choquet functional from
$\mathbb{R}$ into itself which is not linear, the function $x\rightarrow
x^{+}.$ Let $C_{0}([-1, 1]) = \{f \in C([- 1, 1]) : f(0) = 0\}$. A large
family of nonlinear Choquet operators from the Banach lattice $C_{0}([-1, 1])$
to an arbitrary ordered Banach space $E$ is that of operators of the form
\[
T_{\varphi,U}(f)=U\left(  \int_{-1}^{1}f^{+}(tx)\varphi(x)\mathrm{d}x \right)
,
\]
where $\varphi\in C([-1,1])$ is any nonnegative function and $U:C_{0}%
([-1,1])\rightarrow E$ is any monotone linear operator.

Based on previous work done by Zhou \cite{Zhou}, Cerreia-Vioglio, Maccheroni,
Marinacci and Montrucchio \cite{CMMM2012}, proved that a larger class of
functionals defined on a space $C(X)$ (where $X$ is a Hausdorff compact space)
admit a Choquet analogue of the Riesz representation theorem. The aim of our
paper is to further extend their results to the case of operators by
developing a Choquet-Bochner theory of integration relative to monotone set
functions taking values in ordered Banach spaces.

Section 2 is devoted to a quick review of some basic facts from the theory of
ordered Banach spaces. While the particular case of Banach lattices is nicely
covered by a series of textbooks such as those by Meyer-Nieberg \cite{MN} and
Schaefer \cite{Sch1974}, the general theory of ordered Banach spaces is still
waiting to become the subject of an authoritative book.

In Section 3 we develop the theory of Choquet-Bochner integral associated to a
vector capacity (that is, to a monotone set function $\boldsymbol{\mu}$ taking
values in an ordered Banach space such that $\boldsymbol{\mu}(\emptyset)=0).$
As is shown in Theorem 1, this integral has all nice features of the Choquet
integral: monotonicity, positive homogeneity and comonotonic additivity. The
transfer of properties from vector capacities to their integrals also works in
a number of important cases such as the upper/lower continuity and
submodularity. See Theorem \ref{thmtransfer}. In the case of submodular vector
capacities with values in a Banach lattice, the integral analogue of the
modulus inequality also holds. See Theorem \ref{subadthm}.

Section 4 deals with the integral representation of the Choquet operators
defined on spaces $C(X)$ ($X$ being compact and Hausdorff) and taking values
in a Banach lattice with order continuous norm. The main result, Theorem
\ref{thmEWgen}, shows that each such operator is the Choquet-Bochner integral
associated to a suitable upper continuous vector capacity. In Section 5, this
representation is generalized to the framework of comonotonic additive
operators with bounded variation. See Theorem \ref{thmfinal}. The basic
ingredient is Lemma \ref{lemtech}, which shows that every comonotonic additive
operator with bounded variation can be written as the difference of two
positively homogeneous, translation invariant and monotone operators.

The paper ends with a short list of open problems.

\section{Preliminaries on ordered Banach spaces}

An \emph{ordered vector space} is a real vector space $E$ endowed with an
order relation $\leq$ such that the following two conditions are verified:%
\begin{align*}
x  &  \leq y\text{ implies }x+z\leq y+z\text{ for all }x,y,z\in E;\text{
and}\\
x  &  \leq y\text{ implies }\lambda x\leq\lambda y\text{ for }x,y\in E\text{
and }\lambda\in\mathbb{R}_{+}=[0,\infty).
\end{align*}

In this case the set $E_{+}=\left\{  x\in E:x\geq0\right\}  $ is a convex
cone, called the \emph{positive cone}. A real Banach space endowed with an
order relation that makes it an ordered vector space is called an
\emph{ordered Banach space} if the norm is monotone on the positive cone, that
is,%
\[
0\leq x\leq y\text{ implies }\left\Vert x\right\Vert \leq\left\Vert
y\right\Vert .
\]
Note that in this paper we will consider only ordered Banch spaces $E$ whose
positive cones are closed (in the norm topology), \emph{proper} $(-E_{+}\cap
E_{+}=\left\{  0\right\}  )$ and \emph{generating} $(E=E_{+}-E_{+})$.

A convenient way to emphasize the properties of ordered Banach spaces is that
described by Davies in \cite{Davies1968}. According to Davies, a real Banach
space $E$ endowed with a closed and generating cone $E_{+}$ such that%
\[
\left\Vert x\right\Vert =\inf\left\{  \left\Vert y\right\Vert :y\in E,\text{
}-y\leq x\leq y\right\}  \text{\quad for all }x\in E,
\]
is called a \emph{regularly ordered Banach space}%
\index{space!regularly ordered}%
.\emph{ }Examples are the Banach lattices and some other spaces such as
$\operatorname*{Sym}(n,\mathbb{R)}$, the ordered Banach space of all $n\times
n$-dimensional symmetric matrices with real coefficients. The norm of a
symmetric matrix $A$ is defined by the formula%
\[
\left\Vert A\right\Vert =\sup_{\left\Vert x\right\Vert \leq1}\left\vert
\langle Ax,x\rangle\right\vert ,
\]
and the positive cone $\operatorname*{Sym}^{+}(n,\mathbb{R})$ of
$\operatorname*{Sym}(n,\mathbb{R})$ consists of all symmetric matrices $A$
such that $\langle Ax,x\rangle\geq0$ for all $x.$

\begin{lemma}
\label{Lem0}Every ordered Banach space can be renormed by an equivalent norm
to become a regularly ordered Banach space.
\end{lemma}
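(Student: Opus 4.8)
The plan is to construct the desired norm explicitly by the Davies regularization and then check that it obeys the self-consistency formula defining regular ordering. For $x\in E$ I would set
\[
\left\Vert x\right\Vert _{0}=\inf\left\{  \left\Vert y\right\Vert :y\in
E,\ -y\leq x\leq y\right\}  .
\]
First I would record two elementary facts: the feasible set is nonempty (since $E_{+}$ is generating, write $x=u-v$ with $u,v\in E_{+}$; then $y=u+v$ satisfies $-y\leq x\leq y$), and every feasible $y$ automatically lies in $E_{+}$ (because $-y\leq y$ forces $2y\geq0$). From here the seminorm axioms are routine: positive homogeneity for $\lambda>0$ follows by scaling the constraint, the symmetry $\left\Vert -x\right\Vert _{0}=\left\Vert x\right\Vert _{0}$ follows because $-y\leq x\leq y$ is equivalent to $-y\leq -x\leq y$, and subadditivity follows by adding constraints $-y_{1}\leq x_{1}\leq y_{1}$ and $-y_{2}\leq x_{2}\leq y_{2}$ and using the triangle inequality for $\left\Vert \cdot\right\Vert $.

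Next I would establish equivalence of $\left\Vert \cdot\right\Vert _{0}$ with the original norm, which also yields definiteness. For the lower bound, if $-y\leq x\leq y$ then $0\leq y+x\leq 2y$ and $0\leq y-x\leq 2y$, so monotonicity of $\left\Vert \cdot\right\Vert $ on $E_{+}$ gives $\left\Vert y+x\right\Vert \leq 2\left\Vert y\right\Vert $ and $\left\Vert y-x\right\Vert \leq 2\left\Vert y\right\Vert $; then $2\left\Vert x\right\Vert =\left\Vert (y+x)-(y-x)\right\Vert \leq 4\left\Vert y\right\Vert$, whence $\left\Vert x\right\Vert \leq 2\left\Vert x\right\Vert _{0}$. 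In particular $\left\Vert x\right\Vert _{0}=0$ forces $x=0$, so $\left\Vert \cdot\right\Vert _{0}$ is a genuine norm. The upper bound is where the real work sits: I need a constant $K$ with $\left\Vert x\right\Vert _{0}\leq K\left\Vert x\right\Vert$. This is exactly the point where I expect the main obstacle, since it cannot be derived from monotonicity of the norm alone. I would invoke the classical theorem of Andô (see also Klee) that a closed generating cone in a Banach space is non-flat: there is a constant $K>0$ such that every $x\in E$ admits a decomposition $x=u-v$ with $u,v\in E_{+}$ and $\left\Vert u\right\Vert +\left\Vert v\right\Vert \leq K\left\Vert x\right\Vert$. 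Taking $y=u+v$ then gives $-y\leq x\leq y$ with $\left\Vert y\right\Vert \leq K\left\Vert x\right\Vert$, hence $\left\Vert x\right\Vert _{0}\leq K\left\Vert x\right\Vert$. (Here the completeness of $E$ and the closedness of $E_{+}$ enter through the underlying open-mapping argument.)

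Finally I would verify the Davies identity $\left\Vert x\right\Vert _{0}=\inf\left\{  \left\Vert z\right\Vert _{0}:-z\leq x\leq z\right\}$. The key observation is that $\left\Vert \cdot\right\Vert _{0}$ is monotone with respect to order intervals: if $-y\leq x\leq y$ and $-z\leq y\leq z$, then $x\leq y\leq z$ and, from $x\geq -y\geq -z$, also $x\geq -z$; thus every $z$ feasible for $y$ is feasible for $x$, so $\left\Vert x\right\Vert _{0}\leq\left\Vert y\right\Vert _{0}$. Taking the infimum over all such $y$ gives the inequality $\left\Vert x\right\Vert _{0}\leq\inf\{\left\Vert z\right\Vert _{0}:-z\leq x\leq z\}$. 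For the reverse inequality, given $\varepsilon>0$ choose $y$ with $-y\leq x\leq y$ and $\left\Vert y\right\Vert \leq\left\Vert x\right\Vert _{0}+\varepsilon$; since $y\in E_{+}$, the choice $z=y$ is feasible in the definition of $\left\Vert y\right\Vert _{0}$, so $\left\Vert y\right\Vert _{0}\leq\left\Vert y\right\Vert \leq\left\Vert x\right\Vert _{0}+\varepsilon$, and therefore $\inf\{\left\Vert z\right\Vert _{0}:-z\leq x\leq z\}\leq\left\Vert x\right\Vert _{0}+\varepsilon$. Letting $\varepsilon\to0$ completes the proof that $\left(E,\left\Vert \cdot\right\Vert _{0}\right)$ is regularly ordered.
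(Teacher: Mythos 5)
Your proof is correct, and it is worth noting that the paper itself offers no argument for this lemma at all --- it simply defers to Namioka's memoir --- so your write-up supplies precisely the details the paper outsources. Your route is the natural one (and presumably the one behind the citation): define the Davies regularization $\left\Vert x\right\Vert _{0}=\inf\left\{ \left\Vert y\right\Vert :-y\leq x\leq y\right\}$, check it is a norm equivalent to the original, and then verify that it satisfies its own defining identity. You correctly isolate the one genuinely non-elementary ingredient: the upper bound $\left\Vert x\right\Vert _{0}\leq K\left\Vert x\right\Vert$ cannot come from monotonicity of the norm and is exactly And\^{o}'s non-flatness theorem for closed generating cones, which is where completeness, closedness of $E_{+}$ and the Baire category/open mapping machinery enter; the lower bound uses only monotonicity of the original norm on $E_{+}$, and the self-consistency identity is order-theoretic bookkeeping, all of which you execute correctly. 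One remark on logical architecture: the paper deduces its Lemma \ref{lem1} $(a)$ (the bounded decomposition $x=u-v$ with $\left\Vert u\right\Vert ,\left\Vert v\right\Vert \leq C\left\Vert x\right\Vert$) as a consequence of Lemma \ref{Lem0}, whereas you run the implication in the opposite direction, taking that decomposition (in And\^{o}'s form) as the input that proves Lemma \ref{Lem0}. This is not circular, since And\^{o}'s theorem has an independent proof, but your argument makes explicit that the renorming result and the bounded decomposition property are essentially equivalent. Two trivial points you leave implicit and could state for completeness: $(E,\left\Vert \cdot\right\Vert _{0})$ is complete and $E_{+}$ remains closed because the two norms are equivalent, and monotonicity of $\left\Vert \cdot\right\Vert _{0}$ on $E_{+}$ (required for the renormed space to be an ordered Banach space in the paper's sense) follows at once from your observation that $-y\leq x\leq y$ implies $\left\Vert x\right\Vert _{0}\leq\left\Vert y\right\Vert _{0}$.
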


For details, see Namioka \cite{Nam}. Some other useful properties of ordered
Banach spaces are listed below.

\begin{lemma}
\label{lem1}Suppose that $E$ is a regularly ordered Banach space. Then:

$(a)$ There exists a constant $C>0$ such that every element $x\in E$ admits a
decomposition of the form $x=u-v$ where $u,v\in E_{+}$ and $\left\Vert
u\right\Vert ,\left\Vert v\right\Vert \leq C\left\Vert x\right\Vert .$

$(b)$ The dual space of $E,$ $E^{\ast},$ when endowed with the dual cone
\[
E_{+}^{\ast}=\left\{  x^{\ast}\in E^{\ast}:x^{\ast}(x)\geq0\text{ for all
}x\in E_{+}\right\}
\]
is a regularly ordered Banach space.

$(c)\ x\leq y$ in $E$ is equivalent to $x^{\ast}(x)\leq x^{\ast}(y)$ for all
$x^{\ast}\in E_{+}^{\ast}.$

$(d)$ $\left\Vert x\right\Vert =\sup\left\{  x^{\ast}(x):x^{\ast}\in
E_{+}^{\ast},\text{ }\left\Vert x^{\ast}\right\Vert \leq1\right\}  $ for all
$x\in E_{+}.$

$(e)$ If $(x_{n})_{n}$ is a decreasing sequence of positive elements of $E$
which converges weakly to $0,$ then $\left\Vert x_{n}\right\Vert
\rightarrow0.$
\end{lemma}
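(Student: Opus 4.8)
The plan is to treat the five assertions in an order reflecting their dependencies: I would obtain (a), (c) and (d) directly from the regular-norm identity together with the Hahn--Banach theorem, and then use these to settle the genuinely harder duality statement (b) and finally the convergence property (e). For (a) I would start from the defining identity $\|x\|=\inf\{\|y\|:-y\le x\le y\}$ and, for $x\neq 0$, pick $y$ with $-y\le x\le y$ and $\|y\|\le 2\|x\|$. Setting $u=(y+x)/2$ and $v=(y-x)/2$ gives $x=u-v$ with $u,v\in E_{+}$ (since $y\pm x\ge 0$) and $0\le u,v\le y$, so monotonicity of the norm yields $\|u\|,\|v\|\le\|y\|\le 2\|x\|$; thus $C=2$ works (in fact any $C>1$).

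For (d), fixing $x\in E_{+}$ I would introduce the sublinear functional $p(z)=\inf\{\|u\|:u\in E_{+},\ z\le u\}$, which is finite because $E$ is generating. Using the regular norm one checks $p(z)\le\|z\|$ for every $z$ (take $u\ge 0$ with $-u\le z\le u$), while monotonicity of the norm forces $\|u\|\ge\|x\|$ whenever $u\ge x\ge 0$, so $p(x)=\|x\|$. Defining $f(\lambda x)=\lambda\|x\|$ on $\mathbb{R}x$ gives $f\le p$ there, so Hahn--Banach produces an extension $x^{\ast}\le p$; then $x^{\ast}(x)=\|x\|$, $\|x^{\ast}\|\le 1$ (as $x^{\ast}\le p\le\|\cdot\|$) and $x^{\ast}\ge 0$ (because $p$ vanishes on $-E_{+}$), proving (d) with the supremum attained. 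Assertion (c) is the bipolar statement $E_{+}=\{z:x^{\ast}(z)\ge 0\ \forall x^{\ast}\in E_{+}^{\ast}\}$: the forward implication is immediate, and the converse follows by separating a point $z\notin E_{+}$ from the closed convex cone $E_{+}$ by a functional, which is automatically positive and detects $z$.

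The main obstacle is (b). That $E_{+}^{\ast}$ is weak-$\ast$ closed (hence norm closed) and proper is routine. Given $-y^{\ast}\le x^{\ast}\le y^{\ast}$, applying $\pm x^{\ast}\le y^{\ast}$ to $u\pm z\ge 0$ yields $|x^{\ast}(z)|\le y^{\ast}(u)$ whenever $-u\le z\le u$, whence $\|x^{\ast}\|\le\|y^{\ast}\|$. For the reverse inequality and the existence of a dominating positive functional I would consider, for fixed $x^{\ast}\in E^{\ast}$, the functional $q(u)=\sup\{x^{\ast}(z):-u\le z\le u\}$ on $E_{+}$; it is positively homogeneous and superadditive with $0\le q(u)\le\|x^{\ast}\|\,\|u\|$, while $z\mapsto\|x^{\ast}\|\,\|z\|$ is sublinear and dominates $q$ on the cone (here one leans on (a), that $E_{+}$ is generating). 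A sandwich (Mazur--Orlicz / Kantorovich) theorem then interposes a linear functional $y^{\ast}$ with $q(u)\le y^{\ast}(u)$ on $E_{+}$ and $|y^{\ast}(z)|\le\|x^{\ast}\|\,\|z\|$ throughout, so that $y^{\ast}\in E_{+}^{\ast}$, $-y^{\ast}\le x^{\ast}\le y^{\ast}$ and $\|y^{\ast}\|\le\|x^{\ast}\|$. This simultaneously shows that $E_{+}^{\ast}$ is generating and that the infimum defining the regular norm on $E^{\ast}$ is attained and equals $\|x^{\ast}\|$.

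Finally, for (e): since $(x_{n})_{n}$ is decreasing with $x_{n}\ge 0$, monotonicity of the norm makes $\|x_{n}\|$ decreasing to some $L\ge 0$. Choosing by (d) functionals $x_{n}^{\ast}\in E_{+}^{\ast}$ with $\|x_{n}^{\ast}\|\le 1$ and $x_{n}^{\ast}(x_{n})=\|x_{n}\|$, and passing to a weak-$\ast$ cluster point $x^{\ast}$ (Banach--Alaoglu, with $x^{\ast}\in E_{+}^{\ast}$ since the dual cone is weak-$\ast$ closed), I would observe that $x_{n}^{\ast}(x_{m})\ge x_{n}^{\ast}(x_{n})\ge L$ for $n\ge m$, so $x^{\ast}(x_{m})\ge L$ for every $m$; but $x^{\ast}(x_{m})\to 0$ by the weak convergence $x_{m}\to 0$, forcing $L=0$ and hence $\|x_{n}\|\to 0$.
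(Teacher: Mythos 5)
Your proof is correct, and the comparison here is somewhat one-sided: the paper supplies essentially no arguments for this lemma, delegating every part to the literature --- $(a)$ to the renorming Lemma \ref{Lem0}, $(b)$ to Davies' Lemma 2.4, $(c)$ and $(d)$ to Hahn--Banach separation results in cited textbooks, and $(e)$ to a textbook version of the generalized Dini lemma. Your treatment of $(a)$, $(c)$, $(d)$ matches the spirit of those citations: $(a)$ is read off the regular-norm identity plus monotonicity of the norm (with $C=2$, indeed any $C>1$), $(c)$ is separation of a point from the closed cone $E_{+}$, and $(d)$ is Hahn--Banach extension of $\lambda x\mapsto\lambda\Vert x\Vert$ under the sublinear gauge $p(z)=\inf\{\Vert u\Vert:u\in E_{+},\ z\le u\}$. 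Where you add genuine content is $(b)$ and $(e)$. For $(b)$, your Mazur--Orlicz sandwich interposing a positive $y^{\ast}$ between the superlinear $q(u)=\sup\{x^{\ast}(z):-u\le z\le u\}$ on the cone and the sublinear $\Vert x^{\ast}\Vert\,\Vert\cdot\Vert$ is a complete substitute for Davies' lemma, and it even yields the dual regular norm with the infimum attained. For $(e)$, your argument --- norming functionals $x_{n}^{\ast}$ from $(d)$, a weak-$\ast$ cluster point $x^{\ast}\in E_{+}^{\ast}$ via Alaoglu, and the tension between $x^{\ast}(x_{m})\ge L$ (from $x_{n}^{\ast}(x_{m})\ge x_{n}^{\ast}(x_{n})\ge L$ for $n\ge m$) and $x^{\ast}(x_{m})\to0$ --- is a correct direct proof of a fact the paper only cites. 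One small attribution to fix: in $(b)$, the estimate $q(u)\le\Vert x^{\ast}\Vert\,\Vert u\Vert$ rests on the regular-norm identity in $E$ (namely $-u\le z\le u$ implies $\Vert z\Vert\le\Vert u\Vert$), not on $E_{+}$ being generating; the generating property is what gives properness of $E_{+}^{\ast}$. In short, your route buys a self-contained, reference-free proof, while the paper's buys brevity at the price of sending the reader to four external sources.
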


The assertion $(e)$ is a generalization of Dini's lemma in real analysis; see
\cite{CN2014}, p. 173.

\begin{proof}
The assertion $(a)$ follows immediately from Lemma \ref{Lem0}. For $(b)$, see
Davies \cite{Davies1968}, Lemma 2.4. The assertion $(c)$ is an easy
consequence of the Hahn-Banach separation theorem; see \cite{NP2018}, Theorem
2.5.3, p. 100.

The assertion $(d)$ is also a consequence of the Hahn-Banach separation
theorem; see \cite{SW1999}, Theorem 4.3, p. 223.
\end{proof}

\begin{corollary}
\label{cor1}Every ordered Banach space $E$ can be embedded into a space
$C(X)$, where $X$ is a suitable compact space.
\end{corollary}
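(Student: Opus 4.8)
The plan is to realize $E$ as a subspace of the continuous functions on the positive part of the dual ball, via the canonical evaluation map. By Lemma \ref{Lem0} I may replace the norm of $E$ by an equivalent one, and thus assume from the outset that $E$ is regularly ordered; since the notion of embedding at stake (a linear homeomorphism onto its image that is bipositive) is insensitive to passing to an equivalent norm, this entails no loss of generality. I would then take $X$ to be the positive part of the closed unit ball of the dual,
\[
X=\left\{  x^{\ast}\in E_{+}^{\ast}:\left\Vert x^{\ast}\right\Vert \leq1\right\}  ,
\]
equipped with the weak-$\ast$ topology. This $X$ is weak-$\ast$ compact: the closed unit ball of $E^{\ast}$ is weak-$\ast$ compact by the Banach--Alaoglu theorem, while $E_{+}^{\ast}=\bigcap_{x\in E_{+}}\{x^{\ast}:x^{\ast}(x)\geq0\}$ is weak-$\ast$ closed, being an intersection of weak-$\ast$ closed half-spaces; hence $X$ is a closed subset of a compact set, and it is Hausdorff. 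I then define $\Phi\colon E\rightarrow C(X)$ by $\Phi(x)(x^{\ast})=x^{\ast}(x)$; each $\Phi(x)$ is weak-$\ast$ continuous by the very definition of the weak-$\ast$ topology, so $\Phi(x)\in C(X)$, and $\Phi$ is clearly linear.

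Next I would verify the two defining features of the embedding. The order-embedding property (bipositivity) is immediate from Lemma \ref{lem1}$(c)$: since scaling a functional by a positive constant keeps it in $E_{+}^{\ast}$, testing against the unit ball $X$ is the same as testing against all of $E_{+}^{\ast}$, so $\Phi(x)\geq0$ in $C(X)$ holds exactly when $x\geq0$ in $E$. Boundedness is trivial, because $\left\Vert \Phi(x)\right\Vert _{\infty}=\sup_{x^{\ast}\in X}\left\vert x^{\ast}(x)\right\vert \leq\left\Vert x\right\Vert $.

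The crux is the reverse inequality, that $\Phi$ is bounded below, which is what upgrades a bounded bipositive map to a topological embedding; here I would pass to the dual side of the structure theory. By Lemma \ref{lem1}$(b)$ the space $E^{\ast}$ is again regularly ordered, so Lemma \ref{lem1}$(a)$ applies to it and yields a constant $C>0$ with the following use: given $x\in E$, choose by Hahn--Banach a functional $x^{\ast}$ with $\left\Vert x^{\ast}\right\Vert =1$ and $x^{\ast}(x)=\left\Vert x\right\Vert $, and decompose it as $x^{\ast}=p-q$ with $p,q\in E_{+}^{\ast}$ and $\left\Vert p\right\Vert ,\left\Vert q\right\Vert \leq C$. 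Since $p/\left\Vert p\right\Vert $ and $q/\left\Vert q\right\Vert $ lie in $X$ (the case of a vanishing factor being trivial), one has $\left\vert p(x)\right\vert \leq\left\Vert p\right\Vert \left\Vert \Phi(x)\right\Vert _{\infty}\leq C\left\Vert \Phi(x)\right\Vert _{\infty}$ and likewise for $q$, whence
\[
\left\Vert x\right\Vert =x^{\ast}(x)=p(x)-q(x)\leq2C\left\Vert \Phi(x)\right\Vert _{\infty}.
\]
Thus $\tfrac{1}{2C}\left\Vert x\right\Vert \leq\left\Vert \Phi(x)\right\Vert _{\infty}\leq\left\Vert x\right\Vert $, so $\Phi$ is an isomorphism onto its image (which is automatically closed, $E$ being complete) and is bipositive, which is exactly the asserted embedding.

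The single point demanding care is this lower estimate: it is where the regularity of the order, transported to the dual through Lemma \ref{lem1}$(a)$--$(b)$, is genuinely used. Without it the positive functionals could fail to norm $E$, and evaluation on $X$ would recover only a bipositive bounded map rather than a topological embedding. I would therefore expect the main obstacle to be phrased not in the construction of $\Phi$ but in justifying that the decomposition of the norming functional $x^{\ast}$ can be carried out with uniform control $\left\Vert p\right\Vert ,\left\Vert q\right\Vert \leq C\left\Vert x^{\ast}\right\Vert $, which is precisely the content borrowed from Lemma \ref{lem1}$(a)$ applied to $E^{\ast}$.
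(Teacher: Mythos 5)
Your proof is correct, and your construction is exactly the paper's: the same compact set $X=\{x^{\ast}\in E_{+}^{\ast}:\|x^{\ast}\|\leq1\}$ with the weak-$\ast$ topology and the same evaluation map $\Phi(x)(x^{\ast})=x^{\ast}(x)$; you are in fact more careful than the paper in two respects, namely invoking Lemma \ref{Lem0} explicitly to reduce to the regularly ordered case and verifying that $E_{+}^{\ast}$ is weak-$\ast$ closed before applying Alaoglu. Where you genuinely diverge is the key lower estimate on $\|\Phi(x)\|_{\infty}$. The paper gets it by citing Lemma \ref{lem1}$(d)$, the duality formula $\|x\|=\sup\{x^{\ast}(x):x^{\ast}\in E_{+}^{\ast},\ \|x^{\ast}\|\leq1\}$, which together with $(c)$ yields that $\Phi$ is an algebraic, \emph{isometric} and order embedding. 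You instead take a Hahn--Banach norming functional $x^{\ast}$ and split it inside the dual cone, $x^{\ast}=p-q$ with $\|p\|,\|q\|\leq C$, using Lemma \ref{lem1}$(a)$ applied to $E^{\ast}$ (legitimized by Lemma \ref{lem1}$(b)$), which gives $\tfrac{1}{2C}\|x\|\leq\|\Phi(x)\|_{\infty}\leq\|x\|$. This buys you an argument that never needs the sharp duality formula $(d)$ --- only the qualitative decomposition property of the dual cone --- at the price of obtaining an isomorphic rather than isometric embedding, with the constant $2C$ intervening. Since the corollary only asserts that $E$ ``can be embedded,'' both arguments prove the statement; the paper's route delivers the stronger isometric conclusion essentially for free once $(d)$ is granted, while yours isolates more transparently where the regularity of the order (transported to the dual) is actually used.
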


\begin{proof}
According to the Alaoglu theorem, the set $X=\left\{  x^{\ast}\in E_{+}^{\ast
}:\left\Vert x^{\ast}\right\Vert \leq1\right\}  $ is compact relative to the
$w^{\ast}$ topology. Taking into account the assertions $(c)$ and $(d)$ of
Lemma \ref{lem1} one can easily conclude that $E$ embeds into $C(X)$
(algebraically, isometrically and in order) via the map
\[
\Phi:E\rightarrow C(X),\text{\quad}\left(  \Phi(x)\right)  (x^{\ast})=x^{\ast
}(x).
\]

\end{proof}

The following important result is due to V. Klee \cite{Klee}. A simple proof
of it is available in \cite{NO2020} and \cite{NO2020bis}.

\begin{lemma}
\label{lem1*}Every positive linear operator $T:E\rightarrow F$ acting on
ordered Banach spaces is continuous.
\end{lemma}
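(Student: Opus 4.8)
The plan is to argue by contradiction by constructing a ``gliding hump'' out of positive elements, exploiting the monotonicity of the norm on the positive cone together with the closedness of the cone.

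First I would reduce to the case where $E$ is regularly ordered. By Lemma \ref{Lem0} we may replace the norm of $E$ by an equivalent regular norm, and since continuity of a linear operator is unaffected by passing to an equivalent norm, this reduction is harmless. Under this assumption Lemma \ref{lem1}$(a)$ supplies a constant $C>0$ such that every $x\in E$ splits as $x=u-v$ with $u,v\in E_{+}$ and $\left\Vert u\right\Vert ,\left\Vert v\right\Vert \leq C\left\Vert x\right\Vert $. This is the quantitative tool that lets me transfer a hypothetical blow-up of $T$ on all of $E$ to a blow-up along positive vectors alone.

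Suppose, for contradiction, that $T$ is unbounded. Then there is a sequence $(x_{n})$ with $\left\Vert x_{n}\right\Vert \leq 1$ and $\left\Vert Tx_{n}\right\Vert \geq 4^{n}$. Decomposing $x_{n}=u_{n}-v_{n}$ as above and using $\left\Vert Tu_{n}\right\Vert +\left\Vert Tv_{n}\right\Vert \geq \left\Vert Tx_{n}\right\Vert $, one of the two positive parts, call it $w_{n}\in E_{+}$, satisfies $\left\Vert w_{n}\right\Vert \leq C$ and $\left\Vert Tw_{n}\right\Vert \geq 4^{n}/2$. Rescaling, set $z_{n}=2^{-n}w_{n}\in E_{+}$; then $\left\Vert z_{n}\right\Vert \leq C\,2^{-n}$, so the series $\sum_{n}z_{n}$ converges in the Banach space $E$ to some element $z$, while $\left\Vert Tz_{n}\right\Vert =2^{-n}\left\Vert Tw_{n}\right\Vert \geq 2^{n-1}\rightarrow\infty$.

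The final step uses the order structure twice. Because each $z_{k}\geq 0$ and the positive cone of $E$ is closed, every tail sum is positive, so $z-z_{n}=\sum_{k\neq n}z_{k}\geq 0$, that is, $z\geq z_{n}$ for every $n$. Positivity and linearity of $T$ then give $0\leq Tz_{n}\leq Tz$ in $F$, and since the norm of the ordered Banach space $F$ is monotone on $F_{+}$ we conclude $\left\Vert Tz_{n}\right\Vert \leq\left\Vert Tz\right\Vert $ for all $n$, contradicting $\left\Vert Tz_{n}\right\Vert \rightarrow\infty$. Hence $T$ is continuous. I expect the main obstacle to be organizing the contradiction so that the constructed series genuinely converges in $E$ while its images still escape to infinity: the decomposition of Lemma \ref{lem1}$(a)$ is precisely what makes the passage from arbitrary vectors to positive vectors quantitatively controlled, and the closedness of the cone is what legitimizes dominating every $z_{n}$ by the single limit $z$.
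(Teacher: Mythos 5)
Your proof is correct, and there is nothing in the paper to compare it against step by step: the paper gives no proof of this lemma, attributing the result to V. Klee \cite{Klee} and deferring a "simple proof" to \cite{NO2020}. Your gliding-hump argument is the standard self-contained route, and it fits the paper's conventions exactly. The reduction via Lemma \ref{Lem0} and the quantitative decomposition of Lemma \ref{lem1} $(a)$ rely precisely on the paper's blanket assumption that positive cones are closed and generating; the tail-sum step $z-z_{n}=\sum_{k\neq n}z_{k}\geq 0$ is legitimate because it exhibits $z-z_{n}$ as a norm limit of finite sums of positive elements and $E_{+}$ is closed (you note this, and it is exactly where that hypothesis is used). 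The final step is where your argument goes beyond Klee's original functional case: for an operator target one genuinely needs the norm of $F$ to be monotone on $F_{+}$ (equivalently, normality of the cone of $F$), and this is available here only because the paper builds monotonicity of the norm into its very definition of an ordered Banach space. One cosmetic remark: since continuity of $T$ is equivalent to boundedness and is unaffected by equivalent renormings of $E$, your opening reduction is harmless, but you should be explicit that the renorming does not alter the order (it does not, so $E_{+}$ stays closed and generating). Altogether your argument is a valid replacement for the external citation, using only results already stated in the paper.
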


Sometimes, spaces with a richer structure are necessary.

A vector lattice is any ordered vector space $E$ such that $\sup\{x,y\}$ and
$\inf\{x,y\}$ exist for all $x,y\in E.$ In this case for each $x\in E$ we can
define $x^{+}=\sup\left\{  x,0\right\}  $ (the positive part of $x$),
$x^{-}=\sup\left\{  -x,0\right\}  $ (the negative part of $x$) and $\left\vert
x\right\vert =\sup\left\{  -x,x\right\}  $ (the modulus of $x$). $\ $We have
$x=x^{+}-x^{-}$ and $\left\vert x\right\vert =x^{+}+x^{-}.$ A vector lattice
endowed with a norm $\left\Vert \cdot\right\Vert $ such that%
\[
\left\vert x\right\vert \leq\left\vert y\right\vert \text{ implies }%
x\leq\left\Vert y\right\Vert
\]
is called a normed vector lattice; it is called a Banach lattice when in
addition it is metric complete.

Examples of Banach lattices are numerous: the discrete spaces $\mathbb{R}%
^{n},$ $c_{0},$ $c$ and $\ell^{p}$ for $1\leq p\leq\infty$ (endowed with the
coordinate-wise order), and the function spaces $C(K)$ (for $K$ a compact
Hausdorff space) and $L^{p}(\mu)$ with $1\leq p\leq\infty$ (endowed with
pointwise order). Of a special interest are the Banach lattices with
\emph{order continuous norm}, that is, the Banach lattices for which every
monotone and order bounded sequence is convergent in the norm topology. So are
$\mathbb{R}^{n},$ $c_{0}$ and $L^{p}(\mu)$ for $1\leq p<\infty$.

\begin{lemma}
\label{lem2}Every monotone and order bounded sequence of elements in a Banach
lattice $E$ with order continuous norm admits a supremum and an infimum and
all closed order intervals in $E$ are weakly compact.
\end{lemma}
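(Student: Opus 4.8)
The statement splits into two essentially independent assertions, and the first one is almost immediate from the definition of order continuity. \emph{First I would settle the existence of suprema and infima.} Let $(x_n)_n$ be an increasing, order bounded sequence. By the very definition of order continuous norm it converges in norm to some $x\in E$. For a fixed index $m$ and every $n\geq m$ we have $x_n-x_m\in E_{+}$; letting $n\to\infty$ and using that the positive cone of a Banach lattice is norm closed, we obtain $x-x_m\in E_{+}$, so $x$ is an upper bound of the sequence. If $y$ is any other upper bound, then $y-x_n\in E_{+}$ for all $n$, and the same closedness argument yields $y-x\in E_{+}$; hence $x=\sup_n x_n$. The case of a decreasing sequence is symmetric and produces the infimum.

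\emph{For the weak compactness of the order intervals I would pass to the bidual.} Since translations are weak homeomorphisms and $[a,b]=a+[0,u]$ with $u=b-a\geq0$, it suffices to treat an interval of the form $[0,u]$. By monotonicity of the norm this set is norm bounded (by $\left\Vert u\right\Vert $), and by Lemma \ref{lem1}$(c)$ it is the intersection of the weakly closed slabs $\{x:0\leq x^{\ast}(x)\leq x^{\ast}(u)\}$, $x^{\ast}\in E_{+}^{\ast}$, hence convex and weakly closed. I would then embed $E$ canonically, isometrically and in order into its bidual $E^{\ast\ast}$, which is a Dedekind complete Banach lattice. In $E^{\ast\ast}$ the interval $J=\{F\in E^{\ast\ast}:0\leq F\leq u\}$ is again bounded and $\sigma(E^{\ast\ast},E^{\ast})$-closed, so by the Alaoglu theorem it is $w^{\ast}$-compact.

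\emph{The heart of the proof, and the only place where order continuity is really needed, is the claim that $J$ does not grow when one passes from $E$ to $E^{\ast\ast}$}, that is, that $E$ is an ideal in $E^{\ast\ast}$, so that $J$ coincides with the canonical image of $[0,u]$. Granting this, $[0,u]$ is $w^{\ast}$-compact in $E^{\ast\ast}$; since on the image of $E$ the topology $\sigma(E^{\ast\ast},E^{\ast})$ restricts to the weak topology $\sigma(E,E^{\ast})$, the interval $[0,u]$ is weakly compact, as desired.

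To establish the claim for a given $F$ with $0\leq F\leq u$ I would consider the upward directed set $D=\{x\in E_{+}:x\leq F\}$, which is bounded above by $u$, and show that $\sup D$ exists in $E$ and that its image equals $F$; the order continuity of the norm, concretely the Dini-type property recorded in Lemma \ref{lem1}$(e)$, is what forces the positive element $F-\sup D$ to vanish, preventing $F$ from carrying a singular part lying outside $E$. This is the step I expect to be the main obstacle, since controlling a possibly uncountable directed set with the merely sequential form of order continuity is delicate. An alternative that sidesteps the bidual is to prove weak \emph{sequential} compactness directly and invoke the Eberlein--\v{S}mulian theorem, extracting from a sequence in $[0,u]$ a weakly convergent subsequence built out of the iterated suprema $\sup_{m\geq n}x_m$ furnished by the first part of the lemma; but then the subsequence extraction becomes the corresponding difficulty.
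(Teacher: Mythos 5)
Your first paragraph is correct and complete: since the paper defines order continuity of the norm sequentially (every monotone order bounded sequence converges in norm), an increasing order bounded sequence converges by hypothesis, and norm closedness of $E_{+}$ makes the limit an upper bound and in fact the least one; the decreasing case gives the infimum. Note, however, that the paper itself offers no proof of Lemma \ref{lem2} at all: it is quoted from Meyer-Nieberg \cite{MN}, Theorem 2.4.2, p.~86, and that theorem is precisely the statement that, for a Banach lattice, the sequential convergence condition, the property of being an ideal in the bidual, and the weak compactness of order intervals are all equivalent.

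This is why the second half of your proposal has a genuine gap, located exactly where you flag it, and flagging an obstacle is not the same as overcoming it. Your reductions are sound: $[0,u]$ is convex, norm bounded and weakly closed (via Lemma \ref{lem1}$(c)$); the interval $J=\{F\in E^{\ast\ast}:0\leq F\leq u\}$ is $w^{\ast}$-compact by Alaoglu; and $\sigma(E^{\ast\ast},E^{\ast})$ restricts on $E$ to the weak topology. So everything hinges on the claim that $J$ coincides with the canonical image of $[0,u]$, i.e.\ that $E$ is an ideal in $E^{\ast\ast}$ --- and that claim \emph{is} the nontrivial content of the cited theorem. Your sketch of it does not go through as stated: the set $D=\{x\in E_{+}:x\leq F\}$ is an upward directed set, not a sequence, so the first part of the lemma (and the paper's sequential hypothesis) does not furnish $\sup D$; and Lemma \ref{lem1}$(e)$ concerns decreasing sequences \emph{in} $E$ that converge weakly to $0$, whereas the element $F-\sup D$ you need to annihilate lives in $E^{\ast\ast}$ and no weak null sequence is in sight. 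The standard way to bridge the sequential hypothesis to directed sets is a disjointification argument: one first shows that the hypothesis forces every order bounded disjoint sequence to be norm null, and then derives the ideal property from that. Some idea of this kind is indispensable and is absent from your proposal; as it stands you have proved the easy half of the lemma and reformulated the hard half. The alternative Eberlein--\v{S}mulian route you mention at the end has the same status, since the subsequence extraction there requires comparable work.
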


For details, see Meyer-Nieberg \cite{MN}, Theorem 2.4.2, p. 86.

\section{The Choquet-Bochner Integral}

This section is devoted to the extension of Choquet's theory of integrability
to the framework with respect to a monotone set function with values in the
positive cone of a regularly ordered Banach space $E$. This draws a parallel
to the real-valued case already treated in full details by Denneberg
\cite{Denn} and Wang and Klir \cite{WK}.

Given a nonempty set $X,$ by a \emph{lattice} of subsets of $X$ we mean any
collection $\Sigma$ of subsets that contains $\emptyset$ and $X$ and is closed
under finite intersections and unions. A lattice $\Sigma$ is an \emph{algebra}
if in addition it is closed under complementation. An algebra which is closed
under countable unions and intersections is called a $\sigma$-algebra.

Of a special interest is the case where $X$ is a compact Hausdorff space and
$\Sigma$ is either the lattice $\Sigma_{up}^{+}(X),$ of all \emph{upper
contour} \emph{closed sets} $S=\left\{  x\in X:f(x)\geq t\right\}  ,$ or the
lattice $\Sigma_{up}^{-}(X)$ of all \emph{upper contour open sets} $S=\left\{
x\in X:f(x)>t\right\}  ,$ associated to pairs $f\in C(X)$ and $t\in
\mathbb{R}.$

When $X$ is a compact metrizable space, $\Sigma_{up}^{+}(X)$ coincides with
the lattice of all closed subsets of $X$ (and $\Sigma_{up}^{-}(X)$ coincides
with the lattice of all open subsets of $X)$.

In what follows $\Sigma$ denotes a lattice of subsets of an abstract set $X$
and $E$ is a regularly ordered Banach space.

\begin{definition}
\label{defcap}A set function $\boldsymbol{\mu}:\Sigma\rightarrow E_{+}$ is
called a vector capacity if it verifies the following two conditions:

$(C1)$ $\boldsymbol{\mu}(\emptyset)=0;$ and

$(C2)~\boldsymbol{\mu}(A)\leq\boldsymbol{\mu}(B)$ for all $A,B\in\Sigma$ with
$A\subset B$.
\end{definition}

Notice that any vector capacity $\boldsymbol{\mu}$ is positive and takes
values in the order interval $[0,$ $\boldsymbol{\mu}(X)].$

An important class of vector capacities is that of \emph{additive}
(respectively $\sigma$-\emph{additive) vector measures with positive values},
that is, of capacities $\boldsymbol{\mu}:\Sigma\rightarrow E_{+}$ with the
property
\[
\boldsymbol{\mu}\left(
{\displaystyle\bigcup\nolimits_{n}}
A_{n}\right)  =%
{\displaystyle\sum\nolimits_{n=1}^{\infty}}
\boldsymbol{\mu}(A_{n}),
\]
for every finite (respectively infinite) sequence $A_{1},A_{2},A_{3},...$ of
disjoint sets belonging to $\Sigma$ such that $\cup_{n}A_{n}\in\Sigma.$

Some other classes of capacities exhibiting various extensions of the property
of additivity are listed below.

A vector capacity $\boldsymbol{\mu}:\Sigma\rightarrow E_{+}$ is called
\emph{submodular} if%
\begin{equation}
\boldsymbol{\mu}(A\cup B)+\boldsymbol{\mu}(A\cap B)\leq\boldsymbol{\mu
}(A)+\boldsymbol{\mu}(B)\text{\quad for all }A,B\in\Sigma\label{submod}%
\end{equation}
and it is called \emph{supermodular} when the inequality (\ref{submod}) works
in the reversed way. Every additive measure taking values in $E_{+}$ is both
submodular and supermodular.

A vector capacity $\boldsymbol{\mu}:\Sigma\rightarrow E_{+}$ is called
\emph{lower continuous\ }(or continuous by ascending sequences)\emph{ }if%
\[
\lim_{n\rightarrow\infty}\boldsymbol{\mu}(A_{n})=\boldsymbol{\mu}(%
{\displaystyle\bigcup\nolimits_{n=1}^{\infty}}
A_{n})
\]
for every nondecreasing sequence $(A_{n})_{n}$ of sets in $\Sigma$ such that
$\cup_{n=1}^{\infty}A_{n}\in\Sigma;$ $\boldsymbol{\mu}$ is called \emph{upper
continuous\ }(or continuous by descending sequences) if
\[
\lim_{n\rightarrow\infty}\boldsymbol{\mu}(A_{n})=\boldsymbol{\mu}\left(
\cap_{n=1}^{\infty}A_{n}\right)
\]
for every nonincreasing sequence $(A_{n})_{n}$ of sets in $\Sigma$ such that
$\cap_{n=1}^{\infty}A_{n}\in\Sigma.$ If $\boldsymbol{\mu}$ is an additive
capacity defined on a $\sigma$-algebra, then its upper/lower continuity is
equivalent to the property of $\sigma$-additivity.

When $\Sigma$ is an algebra of subsets of $X,$ then to each vector capacity
$\boldsymbol{\mu}$ defined on $\Sigma$, one can attach a new vector capacity
$\overline{\boldsymbol{\mu}}$, the \emph{dual} of $\boldsymbol{\mu},$ which is
defined by the formula
\[
\overline{\boldsymbol{\mu}}(A)=\boldsymbol{\mu}(X)-\boldsymbol{\mu}(X\setminus
A).
\]

Notice that $\overline{\left(  \overline{\boldsymbol{\mu}}\right)
}=\boldsymbol{\mu}.$

The dual of a submodular (supermodular) capacity is a supermodular
(submodular) capacity. Also, dual of a lower continuous (upper continuous)
capacity is an upper continuous (lower continuous) capacity.

\begin{example}
There are several standard procedures to attach to a $\sigma$-additive vector
measure $\boldsymbol{\mu}:\Sigma\rightarrow E_{+}$ certain not necessarily
additive capacities. So is the case of \emph{distorted measures,}
$\nu(A)=T(\boldsymbol{\mu}(A)),$ obtained from $\boldsymbol{\mu}$ by applying
to it a continuous nondecreasing distortion $T:[0,\boldsymbol{\mu
}(X)]\rightarrow\lbrack0,\boldsymbol{\mu}(X)].$ The vector capacities $\nu$ so
obtained are both upper and lower continuous.

For example, this is the case when $E=\operatorname*{Sym}(n,\mathbb{R})$,
$\boldsymbol{\mu}(X)=\mathrm{I}$ $($the identity of $\mathbb{R}^{n}),$ and
$T:[0,\mathrm{I}]\rightarrow\lbrack0,\mathrm{I}]$ is the distortion defined by
the formula $T(A)=A^{2}.$
\end{example}

Taking into account the assertions $(c)$ and $(d)$ of Lemma \ref{lem1}, some
aspects (but not all) of the theory of vector capacities are straightforward
consequences of the theory of $\mathbb{R}_{+}$-valued capacities.

\begin{lemma}
\label{lemweak}A set function $\boldsymbol{\mu}:\Sigma\rightarrow E_{+}$ is a
submodular \emph{(}supermodular, lower continuous, upper continuous\emph{)}
vector capacity if and only if $x^{\ast}\circ\boldsymbol{\mu}$ is a submodular
\emph{(}supermodular, lower continuous, upper continuous\emph{)}
$\mathbb{R}_{+}$-valued capacity whenever $x^{\ast}\in E_{+}^{\ast}.$

When $E=\mathbb{R}^{n},$ this assertion can be formulated via the components
$\mu_{k}=\operatorname*{pr}_{k}\circ\boldsymbol{\mu}$ of $\mathbf{\mu}.$
\end{lemma}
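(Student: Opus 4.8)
The plan is to reduce every assertion about the vector capacity $\boldsymbol{\mu}$ to the corresponding scalar assertion about $x^{\ast}\circ\boldsymbol{\mu}$ by invoking parts $(c)$, $(d)$ and $(e)$ of Lemma \ref{lem1}, which together express that the positive dual cone $E_{+}^{\ast}$ is rich enough to detect the order, the norm (on $E_{+}$) and monotone weak-to-norm convergence. Throughout, note that $x^{\ast}\circ\boldsymbol{\mu}$ automatically takes values in $\mathbb{R}_{+}$ because $\boldsymbol{\mu}(A)\in E_{+}$ and $x^{\ast}\in E_{+}^{\ast}$.

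First I would dispose of the purely order-theoretic properties, where each $x^{\ast}\in E_{+}^{\ast}$ is used only through its linearity and the criterion $(c)$ (that is, $u\leq v$ in $E$ iff $x^{\ast}(u)\leq x^{\ast}(v)$ for all $x^{\ast}\in E_{+}^{\ast}$). Applying $x^{\ast}$ to the two sides of the submodularity inequality (\ref{submod}) produces the scalar quantities
\[
x^{\ast}\bigl(\boldsymbol{\mu}(A\cup B)\bigr)+x^{\ast}\bigl(\boldsymbol{\mu}(A\cap B)\bigr)\quad\text{and}\quad x^{\ast}\bigl(\boldsymbol{\mu}(A)\bigr)+x^{\ast}\bigl(\boldsymbol{\mu}(B)\bigr),
\]
so by $(c)$ the inequality (\ref{submod}) for $\boldsymbol{\mu}$ is equivalent to its scalar version for $x^{\ast}\circ\boldsymbol{\mu}$ holding for every $x^{\ast}$; the supermodular case is identical with the inequalities reversed, and the same argument applied to $A\subset B$ settles the monotonicity axiom $(C2)$. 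The axiom $(C1)$ is handled by $(d)$: since $\boldsymbol{\mu}(\emptyset)\geq 0$, the condition $x^{\ast}(\boldsymbol{\mu}(\emptyset))=0$ for all $x^{\ast}\in E_{+}^{\ast}$ forces $\left\Vert\boldsymbol{\mu}(\emptyset)\right\Vert=0$, hence $\boldsymbol{\mu}(\emptyset)=0$.

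The genuinely nontrivial point is the continuity. The forward implications are immediate, since each $x^{\ast}$ is norm-continuous: if $\boldsymbol{\mu}(A_{n})\to\boldsymbol{\mu}(\cup_{n}A_{n})$ in norm along an ascending sequence, then $x^{\ast}(\boldsymbol{\mu}(A_{n}))\to x^{\ast}(\boldsymbol{\mu}(\cup_{n}A_{n}))$, so $x^{\ast}\circ\boldsymbol{\mu}$ is lower continuous, and similarly for upper continuity. For the converse I would assume $x^{\ast}\circ\boldsymbol{\mu}$ is lower continuous for every $x^{\ast}\in E_{+}^{\ast}$ and take an ascending sequence $(A_{n})_{n}$ in $\Sigma$ with $\cup_{n}A_{n}\in\Sigma$. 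By monotonicity $\boldsymbol{\mu}(A_{n})$ increases and is dominated by $\boldsymbol{\mu}(\cup_{n}A_{n})$, so
\[
y_{n}=\boldsymbol{\mu}\Bigl(\bigcup\nolimits_{k}A_{k}\Bigr)-\boldsymbol{\mu}(A_{n})
\]
is a decreasing sequence of positive elements with $x^{\ast}(y_{n})\to 0$ for every $x^{\ast}\in E_{+}^{\ast}$, i.e. $y_{n}\to 0$ weakly. This is precisely the situation of the Dini-type assertion $(e)$ of Lemma \ref{lem1}, which yields $\left\Vert y_{n}\right\Vert\to 0$, that is, norm convergence $\boldsymbol{\mu}(A_{n})\to\boldsymbol{\mu}(\cup_{n}A_{n})$. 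The upper continuous case is symmetric, with $y_{n}$ replaced by $z_{n}=\boldsymbol{\mu}(A_{n})-\boldsymbol{\mu}(\cap_{k}A_{k})$ along a descending sequence. I expect this invocation of $(e)$ to be the main obstacle, as it is the only step where the weak convergence of the scalarizations must be upgraded to genuine norm convergence in $E$.

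Finally, for $E=\mathbb{R}^{n}$ I would use that, under the coordinate-wise order, the dual cone $E_{+}^{\ast}$ consists exactly of the functionals $x^{\ast}=\sum_{k=1}^{n}c_{k}\operatorname{pr}_{k}$ with all $c_{k}\geq 0$, whence $x^{\ast}\circ\boldsymbol{\mu}=\sum_{k=1}^{n}c_{k}\mu_{k}$. Each of the four properties is stable under nonnegative linear combinations of scalar capacities, so requiring it for every $x^{\ast}\in E_{+}^{\ast}$ is equivalent to requiring it for each component $\mu_{k}=\operatorname{pr}_{k}\circ\boldsymbol{\mu}$; the particular choice $x^{\ast}=\operatorname{pr}_{k}$ recovers the individual components and closes the equivalence.
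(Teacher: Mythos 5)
Your proposal is correct, and it follows the scalarization strategy the paper itself intends: the paper gives no written proof of this lemma, merely prefacing it with the remark that, taking into account assertions $(c)$ and $(d)$ of Lemma \ref{lem1}, such facts are straightforward consequences of the scalar theory. Your treatment of the order-theoretic clauses (the capacity axioms, submodularity, supermodularity) via Lemma \ref{lem1}$(c)$ is exactly that intended argument. Where you go beyond the paper's pointer is the converse of the continuity clauses: as you rightly observe, passing from lower (upper) continuity of every $x^{\ast}\circ\boldsymbol{\mu}$ back to norm continuity of $\boldsymbol{\mu}$ is not a consequence of $(c)$ and $(d)$ alone, but requires the Dini-type assertion $(e)$ applied to the decreasing positive sequence $y_{n}=\boldsymbol{\mu}\left(\cup_{k}A_{k}\right)-\boldsymbol{\mu}(A_{n})$. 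One small step you should make explicit: to conclude that $y_{n}\rightarrow 0$ weakly, as $(e)$ demands, you need $x^{\ast}(y_{n})\rightarrow 0$ for all $x^{\ast}\in E^{\ast}$, not only for $x^{\ast}\in E_{+}^{\ast}$; this follows because $E_{+}^{\ast}$ generates $E^{\ast}$ (Lemma \ref{lem1}$(b)$, since the dual of a regularly ordered Banach space is again regularly ordered), so every functional is a difference of two positive ones. With that remark inserted your argument is complete, and it documents that the continuity clauses of the lemma actually rest on parts $(b)$ and $(e)$ of Lemma \ref{lem1}, not just on the parts $(c)$ and $(d)$ cited by the paper; your reduction of the $E=\mathbb{R}^{n}$ case to the components via nonnegative combinations of the projections is likewise correct.
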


In what follows the term of (upper) \emph{measurable} \emph{function} refers
to any function $f:X\rightarrow\mathbb{R}$ whose all upper contour sets
$\left\{  x\in X:f(x)\geq t\right\}  $ belong to $\Sigma$. When $\Sigma$ is a
$\sigma$-algebra, this notion of measurability is equivalent to the Borel measurability.

We will denote by $B(\Sigma)$ the set of all bounded measurable functions
$f:X\rightarrow\mathbb{R}.$ In general $B(\Sigma)$ is not a vector space
(unless the case when $\Sigma$ is a $\sigma$-algebra). However, even when
$\Sigma$ is only an algebra, the set $B(\Sigma)$ plays some nice properties of
stability: if $f,g\in B(\Sigma)$ and $\alpha,\beta\in\mathbb{R},$ then
\[
\inf\{f,g\},\text{ }\sup\left\{  f,g\right\}  \text{ and }\alpha+\beta f
\]
also belong to $B(\Sigma).$ See \cite{MM2004}, Proposition 15.

Given a capacity $\mu:\Sigma\rightarrow\mathbb{R}_{+},$ the \emph{Choquet
integral} of a measurable function $f:X\rightarrow\mathbb{R}$ on a set
$A\in\Sigma$ is defined as the sum of two Riemann improper integrals,
\begin{align}
(\operatorname*{C})\int_{A}f\mathrm{d}\mu &  =\int_{0}^{+\infty}\mu\left(
\{x\in A:f(x)\geq t\}\right)  \mathrm{d}t\label{Chint}\\
&  +\int_{-\infty}^{0}\left[  \mu\left(  \{x\in A:f(x)\geq t\}\right)
-\mu(A)\right]  \mathrm{d}t.\nonumber
\end{align}
Accordingly, $f$ is said to be \emph{Choquet integrable} on $A$ if both
integrals above are finite. See the seminal paper of Choquet \cite{Ch1954}.

If $f\geq0$, then the last integral in the formula (\ref{Chint}) is 0. When
$\Sigma$ is a $\sigma$-algebra, the inequality sign $\geq$ in the above two
integrands can be replaced by $>;$ see \cite{WK}, Theorem 11.1,\emph{ }p. 226.

Every bounded measurable function is Choquet integrable. The Choquet integral
coincides with the Lebesgue integral when the underlying set function $\mu$ is
a $\sigma$-additive measure defined on a $\sigma$-algebra.

The theory of Choquet integral is available from numerous sources including
the books of Denneberg \cite{Denn}, and Wang and Klir \cite{WK}.

The concept of integrability of a measurable function with respect to a vector
capacity $\boldsymbol{\mu}:\Sigma\rightarrow E_{+}$ can be introduced by a
fusion between the Choquet integral and the Bochner theory of integration of
vector-valued functions.

Recall that a function $\psi:\mathbb{R}\rightarrow E$ is \emph{Bochner
integrable} with respect to the Lebesgue measure on $\mathbb{R}$ if there
exists a sequence of step functions $\psi_{n}:\mathbb{R}\rightarrow E$ such
that%
\[
\lim_{n\rightarrow\infty}\psi_{m}(t)=\psi(t)\text{ almost everywhere and }%
\int_{\mathbb{R}}\left\Vert \psi-\psi_{n}\right\Vert \mathrm{d}t\rightarrow0.
\]
In this case, the (Bochner) integral of $\psi$ is defined by
\[
\int_{\mathbb{R}}\psi\mathrm{d}t=\lim_{n\rightarrow\infty}\int_{\mathbb{R}%
}\psi_{n}\mathrm{d}t.
\]
Notice that if $T:E\rightarrow F$ is a bounded linear operator, then
\begin{equation}
T\left(  \int_{\mathbb{R}}\psi\mathrm{d}t\right)  =\int_{\mathbb{R}}T\circ
\psi\mathrm{d}t. \label{CB_opercommuting}%
\end{equation}

Details about Bochner integral are available in the books of Diestel and Uhl
\cite{DU1977} and Dinculeanu \cite{Din}.

\begin{definition}
\label{defIntB}A measurable function $f:X\rightarrow\mathbb{R}$ is called
Choquet-Bochner integrable with respect to the vector capacity
$\boldsymbol{\mu}:\Sigma\rightarrow E_{+}$ on the set $A\in\Sigma$ if for
every $A\in\Sigma$ the functions $t\rightarrow\boldsymbol{\mu}\left(  \{x\in
A:f(x)\geq t\}\right)  $ and $t\rightarrow\boldsymbol{\mu}\left(  \{x\in
A:f(x)\geq t\}\right)  -\boldsymbol{\mu}(A)$ are Bochner integrable
respectively on $[0,\infty)$ and $(-\infty,0].$ Under these circumstances, the
Choquet-Bochner integral over $A$ is defined by the formula%
\begin{align*}
(\operatorname*{CB})\int_{A}f\mathrm{d}\boldsymbol{\mu}  &  =\int_{0}%
^{+\infty}\boldsymbol{\mu}\left(  \{x\in A:f(x)\geq t\}\right)  \mathrm{d}t\\
+  &  \int_{-\infty}^{0}\left[  \boldsymbol{\mu}\left(  \{x\in A:f(x)\geq
t\}\right)  -\mu(A)\right]  \mathrm{d}t.
\end{align*}

\end{definition}

According to the formula (\ref{CB_opercommuting}), if $f$ is Choquet-Bochner
integrable, then
\begin{equation}
x^{\ast}\left(  (\operatorname*{CB})\int_{A}f\mathrm{d}\boldsymbol{\mu
}\right)  =(\operatorname*{C})\int_{A}f\mathrm{d}(x^{\ast}\circ\mu),
\label{functcomm}%
\end{equation}
for every positive linear functional $x^{\ast}\in E^{\ast}.$

A large class of Choquet-Bochner integrable is indicated below.

\begin{lemma}
\label{lem6}If $f:X\rightarrow\mathbb{R}$ is a bounded measurable function,
then it is Choquet-Bochner integrable on every set $A\in\Sigma$.
\end{lemma}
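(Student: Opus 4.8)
The plan is to reduce the vector-valued statement to the already well-understood scalar case via the functional representation furnished by Lemma \ref{lem1} and the commutation relation (\ref{functcomm}). First I would recall that since $f$ is bounded and measurable, for each fixed $A \in \Sigma$ the two functions
\[
\phi(t) = \boldsymbol{\mu}\left( \{x \in A : f(x) \geq t\} \right), \qquad t \in [0,\infty),
\]
and
\[
\psi(t) = \boldsymbol{\mu}\left( \{x \in A : f(x) \geq t\} \right) - \boldsymbol{\mu}(A), \qquad t \in (-\infty,0],
\]
take values in the order interval $[0,\boldsymbol{\mu}(X)]$ and are compactly supported: if $|f| \leq M$ on $A$, then the upper contour set equals $A$ for $t \leq -M$ and equals $\emptyset$ for $t > M$, so $\phi$ vanishes off $[0,M]$ and $\psi$ vanishes off $[-M,0]$. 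Thus it suffices to establish Bochner integrability of each on a compact interval.

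Next I would verify that $\phi$ and $\psi$ are strongly measurable and essentially bounded. By condition $(C2)$ the map $t \mapsto \{x \in A : f(x) \geq t\}$ is nonincreasing in $t$, hence $\phi$ is a \emph{monotone} (nonincreasing) function into the positive cone $E_+$. Monotonicity gives measurability of the scalar functions $t \mapsto x^{\ast}(\phi(t))$ for every $x^{\ast} \in E^{\ast}_+$, and since by Lemma \ref{lem1}$(b)$ the cone $E^{\ast}_+$ is generating, this extends to all $x^{\ast} \in E^{\ast}$; thus $\phi$ is weakly measurable. Boundedness in norm follows from $0 \leq \phi(t) \leq \boldsymbol{\mu}(X)$ together with the monotonicity of the norm on $E_+$, so $\|\phi(t)\| \leq \|\boldsymbol{\mu}(X)\|$. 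The same reasoning applies to $\psi$.

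The main obstacle is strong measurability, since Bochner integrability (via the Pettis measurability theorem) requires $\phi$ to be almost everywhere the limit of countably valued functions, i.e.\ almost separably valued. Here I would exploit monotonicity decisively: a nonincreasing function $\phi:[0,M] \to E_+$ has at most countably many points of discontinuity in any reasonable sense, but more usefully its range is order-bounded and can be approximated by simple functions built from a countable dense partition of the parameter interval. Concretely, for each $n$ partition $[0,M]$ into $2^n$ equal subintervals and define the step function $\phi_n(t) = \phi(k M/2^n)$ on the $k$-th subinterval; by monotonicity $\phi_n(t) \to \phi(t)$ at every point of continuity of $\phi$, and the range of $\phi$ lies in the closed separable subspace generated by the countable set $\{\phi(qM) : q \in \mathbb{Q} \cap [0,1]\}$ together with the right/left limits, establishing almost separable valuedness. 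Combined with weak measurability, Pettis's theorem yields strong measurability, and the uniform norm bound gives $\int \|\phi\|\,\mathrm{d}t \leq M\,\|\boldsymbol{\mu}(X)\| < \infty$, so $\phi$ is Bochner integrable; the identical argument for $\psi$ completes the proof.
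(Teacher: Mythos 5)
Your strategy is at bottom the same as the paper's: reduce to a compact parameter interval, exploit the monotonicity of the contour function, and approximate it by step functions on a uniform grid. The paper argues directly that $\int\Vert\varphi-\varphi_n\Vert\,\mathrm{d}t\to 0$, whereas you route the argument through the Pettis measurability theorem; the genuine gap lies in your central claim that a nonincreasing function $\phi:[0,M]\rightarrow E_{+}$ has at most countably many discontinuities and (almost) separable range. That is a fact about \emph{real-valued} monotone functions; it fails for monotone functions into a general ordered Banach space, and with it your appeal to Pettis collapses. Concretely, let $E=\ell^{\infty}$, let $(q_{n})_{n}$ enumerate $\mathbb{Q}\cap\lbrack0,1]$, and define the vector capacity $\boldsymbol{\mu}(A)=\chi_{\{n:q_{n}\in A\}}$ on the Borel subsets of $X=[0,1]$; take $f(x)=x$ and $A=X$. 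Then $\phi(t)=\boldsymbol{\mu}\left(  \{x:x\geq t\}\right)  =\chi_{\{n:q_{n}\geq t\}}$ satisfies $\Vert\phi(t)-\phi(s)\Vert_{\infty}=1$ whenever $t\neq s$ in $[0,1]$, so $\phi$ is nowhere continuous and its restriction to any set of full Lebesgue measure has nonseparable range. Hence $\phi$ is not almost separably valued, not strongly measurable, and not Bochner integrable; indeed no sequence of step functions can converge to it almost everywhere.

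In fairness, the same example defeats the paper's own proof (the assertion there that $\varphi$ is the \emph{uniform} limit of the grid step functions, and the estimate $\frac{2M}{n}\boldsymbol{\mu}(X)$, both fail for this $\phi$), so you have run into a real restriction on the lemma's generality rather than a flaw peculiar to your argument. Both proofs become correct under the hypothesis in force where the lemma is actually used (Theorems \ref{thmEWgen} and \ref{thmfinal}), namely that $E$ is a Banach lattice with order continuous norm: there every monotone order bounded sequence converges in norm, so the one-sided limits $\phi(t^{-}),\phi(t^{+})$ exist, and for each $\varepsilon>0$ only finitely many jumps can have norm $\geq\varepsilon$ (otherwise the partial sums of countably many such jumps would form an increasing, order bounded, non-Cauchy sequence). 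Thus $\phi$ has countably many discontinuities, your $\phi_{n}$ converge to $\phi$ almost everywhere, the essential range is separable, Pettis applies, and scalar dominated convergence yields $\int\Vert\phi-\phi_{n}\Vert\,\mathrm{d}t\to0$. If you add order continuity of the norm (or separability of $E$) to the hypotheses, your proof closes; for a general regularly ordered $E$, the measurability step as you state it is false.
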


\begin{proof}
Suppose that $f$ takes values in the interval $[0,M].$ Then the function
$\varphi(t)=\boldsymbol{\mu}\left(  \{x\in A:f(x)\geq t\}\right)  $ is
positive and nonincreasing on the interval $[0,M]$ and null outside this
interval. As a consequence, the function $\varphi$ is the uniform limit of the
sequence of step functions defined as follows:%
\begin{align*}
\varphi_{n}(t)  &  =0\text{ if }t\notin\lbrack0,M]\\
\varphi_{n}(t)  &  =\varphi(t)\text{ if }t=M
\end{align*}
and
\[
\varphi_{n}(t)=\varphi(\frac{k}{n}M)
\]
if $t\in\lbrack\frac{k}{n}M,\frac{k+1}{n}M)$ and $k=0,...,n-1.$ A simple
computation shows that%
\begin{align*}
\int_{0}^{+\infty}\left\Vert \varphi(t)-\varphi_{n}(t)\right\Vert \mathrm{d}t
&  =\int_{0}^{M}\left\Vert \varphi(t)-\varphi_{n}(t)\right\Vert \mathrm{d}t\\
&  \leq%
{\displaystyle\sum\nolimits_{l=0}^{n-1}}
\int_{t_{k}}^{t_{k+1}}\left\Vert \varphi(t)-\varphi_{n}(t)\right\Vert
\mathrm{d}t\\
&  \leq\frac{2M}{n}\boldsymbol{\mu}(X)\rightarrow0
\end{align*}
as $n\rightarrow\infty,$ which means the Bochner integrability of the function
$\varphi.$ See \cite{DU1977}, p. 44.

The other cases, when $f$ takes values in the interval $[M,0]$ or in an
interval $[m,M]$ with $m<0<M,$ can be treated in a similar way.
\end{proof}

The next lemma collects a number of simple (but important) properties of the
Choquet-Bochner integral.

\begin{lemma}
\label{lemgenprop}Suppose that $E$ is an ordered Banach space,
$\boldsymbol{\mu}:\Sigma\rightarrow E_{+}$ is a vector capacity and
$A\in\Sigma$.

$(a)$ If $f$ and $g$ are Choquet-Bochner integrable functions, then%
\begin{gather*}
f\geq0\text{ implies }(\operatorname*{CB})\int_{A}f\mathrm{d}\boldsymbol{\mu
}\geq0\text{ \quad\emph{(}positivity\emph{)}}\\
f\leq g\text{ implies }\left(  \operatorname*{CB}\right)  \int_{A}%
f\mathrm{d}\boldsymbol{\mu}\leq\left(  \operatorname*{CB}\right)  \int
_{A}g\mathrm{d}\boldsymbol{\mu}\text{ \quad\emph{(}monotonicity\emph{)}}\\
\left(  \operatorname*{CB}\right)  \int_{A}af\mathrm{d}\boldsymbol{\mu}%
=a\cdot\left(  \operatorname*{CB}\right)  \int_{A}f\mathrm{d}\boldsymbol{\mu
}\text{\quad for all }a\geq0\text{ \quad\emph{(}positive\emph{ }%
homogeneity\emph{)}}\\
\left(  \operatorname*{CB}\right)  \int_{A}1\cdot\mathrm{d}\boldsymbol{\mu
}=\boldsymbol{\mu}(A)\text{\quad\emph{(}calibration\emph{).}}%
\end{gather*}
$(b)$ In general, the Choquet-Bochner integral is not additive but if $f$ and
$g$ are Choquet-Bochner integrable functions and also comonotonic in the sense
of Dellacherie \emph{\cite{Del1970}) (}that is, $(f(\omega)-f(\omega^{\prime
}))\cdot(g(\omega)-g(\omega^{\prime}))\geq0$, for all $\omega,\omega^{\prime
}\in X$\emph{), }then
\[
\left(  \operatorname*{CB}\right)  \int_{A}(f+g)\mathrm{d}\boldsymbol{\mu
}=\left(  \operatorname*{CB}\right)  \int_{A}f\mathrm{d}\boldsymbol{\mu
}+\left(  \operatorname*{CB}\right)  \int_{A}g\mathrm{d}\boldsymbol{\mu}.
\]
In particular, the Choquet-Bochner integral is translation invariant,%
\[
\left(  \operatorname*{CB}\right)  \int_{A}(f+c)\mathrm{d}\boldsymbol{\mu
}=\left(  \operatorname*{CB}\right)  \int_{A}f\mathrm{d}\boldsymbol{\mu
}+c\boldsymbol{\mu}(A),
\]
for all Choquet-Bochner integrable functions $f$, all sets $A\in\Sigma$ and
all numbers $c\in\mathbb{R}$.
\end{lemma}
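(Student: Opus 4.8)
The plan is to reduce every assertion to the already-known scalar Choquet integral by means of the commutation formula (\ref{functcomm}) combined with the order-duality of Lemma \ref{lem1}. By Lemma \ref{Lem0} we may assume without loss of generality that $E$ is regularly ordered (renorming by an equivalent norm changes neither the order nor the class of Bochner integrable functions), so that parts $(c)$ and $(d)$ of Lemma \ref{lem1} are at our disposal. In particular, two vectors $u,v\in E$ coincide as soon as $x^{\ast}(u)=x^{\ast}(v)$ for every $x^{\ast}\in E_{+}^{\ast}$, and $u\leq v$ as soon as $x^{\ast}(u)\leq x^{\ast}(v)$ for every such $x^{\ast}$. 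This is the lifting mechanism that will turn each scalar (in)equality into the corresponding vector (in)equality.

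First I would dispose of calibration and, more generally, of the value on constant functions by direct computation. For the constant function $c\in\mathbb{R}$ the upper contour set $\{x\in A: c\geq t\}$ equals $A$ for $t\leq c$ and $\emptyset$ for $t>c$; splitting the defining integral of Definition \ref{defIntB} according to the sign of $c$ and using $\boldsymbol{\mu}(\emptyset)=0$ yields $(\operatorname*{CB})\int_{A}c\,\mathrm{d}\boldsymbol{\mu}=c\,\boldsymbol{\mu}(A)$ for every real $c$. This contains calibration as the case $c=1$ and the identity $(\operatorname*{CB})\int_{A}0\,\mathrm{d}\boldsymbol{\mu}=0$ as the case $c=0$.

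Next, for positivity, monotonicity, positive homogeneity and comonotonic additivity I would fix an arbitrary $x^{\ast}\in E_{+}^{\ast}$ and apply (\ref{functcomm}) to rewrite each side as an ordinary Choquet integral against the $\mathbb{R}_{+}$-valued capacity $x^{\ast}\circ\boldsymbol{\mu}$. The classical Choquet integral possesses all four properties: positivity and monotonicity come from the monotonicity of $x^{\ast}\circ\boldsymbol{\mu}$ on level sets, positive homogeneity from the change of variable $t\mapsto t/a$ (with the case $a=0$ trivial), and — the one genuinely nontrivial input — comonotonic additivity is the theorem of Dellacherie, valid because $f$ and $g$ comonotone forces the family of their upper contour sets to be totally ordered by inclusion (see \cite{Denn}, \cite{WK}). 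Each resulting scalar statement holds for every $x^{\ast}\in E_{+}^{\ast}$, and Lemma \ref{lem1}$(c)$ then lifts it to $E$. Translation invariance finally follows by combining comonotonic additivity — a constant is comonotone with every $f$, since $(c-c)(f(s)-f(t))=0$ — with the constant evaluation of the previous paragraph.

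The main difficulty here is bookkeeping rather than depth: in the comonotone case one must first make sure that $f+g$ is itself Choquet-Bochner integrable, so that the left-hand side is defined before the reduction can be invoked. This is precisely where the chain structure of the upper contour sets of comonotone functions is used a second time, guaranteeing that $\{f+g\geq t\}$ is a level set of $f$ or of $g$ (hence in $\Sigma$) and that $t\mapsto\boldsymbol{\mu}(\{f+g\geq t\})$ is Bochner integrable; for bounded $f,g$ this integrability is immediate from Lemma \ref{lem6}. Once existence of the integrals is secured, the functional-reduction argument closes every case uniformly.
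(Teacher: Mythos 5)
Your proposal is correct and follows essentially the same route as the paper's own proof, which reduces both assertions to the scalar case by applying an arbitrary $x^{\ast}\in E_{+}^{\ast}$ via formula (\ref{functcomm}), invoking the known properties of the real-valued Choquet integral (Proposition 5.1 in \cite{Denn}), and lifting back with Lemma \ref{lem1} $(c)$. The extra bookkeeping you supply (the direct computation for constants and the verification, via the chain structure of upper contour sets, that $f+g$ is itself measurable and Choquet--Bochner integrable in the comonotone case) is sound and in fact fills in details the paper silently delegates to the citation.
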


\begin{proof}
According to Lemma \ref{lem1} $(c)$, and formula (\ref{CB_opercommuting}),
applied in the case of an arbitrary functional $x^{\ast}\in E_{+}^{\ast},$ the
proof of both assertions $(a)$ and $(b)$ reduce to the case of capacities with
values in $\mathbb{R}_{+},$ already covered by Proposition 5.1 in \cite{Denn},
pp. 64-65.
\end{proof}

\begin{corollary}
The equality%
\[
\left(  \operatorname*{CB}\right)  \int_{A}(\alpha f+c)\mathrm{d}%
\boldsymbol{\mu}=\alpha\left(  \left(  \operatorname*{CB}\right)  \int
_{A}f\mathrm{d}\boldsymbol{\mu}\right)  +c\cdot\boldsymbol{\mu}(A),
\]
holds for all Choquet-Bochner integrable functions $f$, all sets $A\in\Sigma$
and all numbers $\alpha\in\mathbb{R}_{+}$ and $c\in\mathbb{R}$.
\end{corollary}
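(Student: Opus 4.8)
The plan is to obtain this identity by simply chaining together the two relevant properties already established in Lemma~\ref{lemgenprop}, namely positive homogeneity (listed in part $(a)$) and translation invariance (the displayed consequence of comonotonic additivity in part $(b)$), applied in the correct order to the affine transformation $f\mapsto\alpha f+c$. Since $\alpha\geq0$ and $c\in\mathbb{R}$ are arbitrary, neither property alone covers the statement, but together they do.

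First I would note that the left-hand side is meaningful: because $f$ is Choquet-Bochner integrable and $\alpha\geq0$, the function $\alpha f$ is again Choquet-Bochner integrable (its upper contour sets are merely a rescaling in $t$ of those of $f$, so positive homogeneity in Lemma~\ref{lemgenprop}$(a)$ applies), and translation invariance in Lemma~\ref{lemgenprop}$(b)$ is stated for every Choquet-Bochner integrable function, so it applies to $g=\alpha f$. Applying that translation invariance with the constant $c$ gives
\[
\left(\operatorname*{CB}\right)\int_{A}(\alpha f+c)\,\mathrm{d}\boldsymbol{\mu}
=\left(\operatorname*{CB}\right)\int_{A}\alpha f\,\mathrm{d}\boldsymbol{\mu}
+c\,\boldsymbol{\mu}(A).
\]
Next I would invoke positive homogeneity, which is valid precisely because $\alpha\geq0$, to rewrite the first term on the right as $\alpha\left(\left(\operatorname*{CB}\right)\int_{A}f\,\mathrm{d}\boldsymbol{\mu}\right)$. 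Substituting this back yields exactly the asserted equality.

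There is no genuine obstacle here: the corollary is a bookkeeping consequence of the preceding lemma. The only point that deserves a word of justification is the restriction $\alpha\geq0$, which is exactly the hypothesis under which positive homogeneity holds; for negative scalars the identity would fail, since the Choquet-Bochner integral is only positively homogeneous and not homogeneous. I would therefore keep the argument to the two-line chain above rather than reprove anything about Bochner integrability, relying entirely on Lemma~\ref{lemgenprop}.
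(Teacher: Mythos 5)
Your proof is correct and follows exactly the route the paper intends: the corollary is stated without proof as an immediate consequence of Lemma~\ref{lemgenprop}, obtained by applying translation invariance (part~$(b)$) to $\alpha f$ and then positive homogeneity (part~$(a)$), which is precisely your two-step chain. Your added remark on why $\alpha f$ remains Choquet-Bochner integrable (rescaling of the upper contour sets in $t$) is a sensible, harmless supplement to what the paper leaves implicit.
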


The next result describes how the special properties of a vector capacity
transfer to the Choquet-Bochner integral.

\begin{theorem}
\label{thmtransfer}$(a)$ If $\boldsymbol{\mu}$ is an upper continuous
capacity, then the Choquet-Bochner integral is an upper continuous operator,
that is,%
\[
\lim_{n\rightarrow\infty}\left\Vert \left(  \operatorname*{CB}\right)
\int_{A}f_{n}\mathrm{d}\boldsymbol{\mu}-\left(  \operatorname*{CB}\right)
\int_{A}f\mathrm{d}\boldsymbol{\mu}\right\Vert =0,
\]
whenever $(f_{n})_{n}$ is a nonincreasing sequence of Choquet-Bochner
integrable functions that converges pointwise to the Choquet-Bochner
integrable function $f$ and $A\in\Sigma$.

$(b)$ If $\boldsymbol{\mu}$ is a lower continuous capacity, then the
Choquet-Bochner integral is lower continuous in the sense that%
\[
\lim_{n\rightarrow\infty}\left\Vert \left(  \operatorname*{CB}\right)
\int_{A}f_{n}\mathrm{d}\boldsymbol{\mu}-\left(  \operatorname*{CB}\right)
\int_{A}f\mathrm{d}\boldsymbol{\mu}\right\Vert =0
\]
whenever $(f_{n})_{n}$ is a nondecreasing sequence of Choquet-Bochner
integrable functions that converges pointwise to the Choquet-Bochner
integrable function $f$ and $A\in\Sigma$.

$(c)$ If $\Sigma$ is an algebra and $\boldsymbol{\mu}:\Sigma\rightarrow E_{+}$
is a submodular capacity, then the Choquet-Bochner integral is a submodular
operator in the sense that
\[
\left(  \operatorname*{CB}\right)  \int_{A}\sup\left\{  f,g\right\}
\mathrm{d}\boldsymbol{\mu}+\left(  \operatorname*{CB}\right)  \int_{A}%
\inf\{f,g\}\mathrm{d}\boldsymbol{\mu}\leq\left(  \operatorname*{CB}\right)
\int_{A}f\mathrm{d}\mu+(\operatorname*{CB})\int_{A}g\mathrm{d}\boldsymbol{\mu}%
\]
whenever $f$ and $g$ are Choquet-Bochner integrable and $A\in\Sigma$ .
\end{theorem}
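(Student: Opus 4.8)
The plan is to reduce all three assertions to the already-known scalar theory of the Choquet integral by composing with positive functionals, exploiting the commutation formula (\ref{functcomm}) together with Lemma \ref{lemweak}. It is cleanest to dispose first of part $(c)$, which is a pure order inequality. Fix an arbitrary $x^{\ast}\in E_{+}^{\ast}$. By (\ref{functcomm}), applying $x^{\ast}$ to each of the four Choquet-Bochner integrals in the submodularity inequality produces the corresponding scalar Choquet integrals with respect to $x^{\ast}\circ\boldsymbol{\mu}$. By Lemma \ref{lemweak} this scalar capacity is submodular on the algebra $\Sigma$, and submodularity of the scalar Choquet integral with respect to a submodular capacity is classical (Denneberg \cite{Denn}). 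Hence the inequality holds after applying any $x^{\ast}\in E_{+}^{\ast}$, and Lemma \ref{lem1}$(c)$ then promotes it to the order inequality in $E$.

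For parts $(a)$ and $(b)$ the difficulty is that (\ref{functcomm}) controls the integrals only weakly, while the conclusion is about norm convergence. First I would record, using the monotonicity and positive homogeneity from Lemma \ref{lemgenprop}$(a)$, that the sequence $I_{n}=(\operatorname*{CB})\int_{A}f_{n}\mathrm{d}\boldsymbol{\mu}$ is monotone in $E$ and order-bounded between $I=(\operatorname*{CB})\int_{A}f\mathrm{d}\boldsymbol{\mu}$ and $I_{1}$. In case $(a)$ the $f_{n}$ decrease, so $I_{n}-I\geq0$ is a decreasing sequence of positive elements; in case $(b)$ the $f_{n}$ increase, so $I-I_{n}\geq0$ is decreasing and positive. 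In either case the target is to show this decreasing positive sequence tends to $0$ in norm.

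Next I would establish weak convergence to $0$. For any $x^{\ast}\in E_{+}^{\ast}$, formula (\ref{functcomm}) gives $x^{\ast}(I_{n})=(\operatorname*{C})\int_{A}f_{n}\mathrm{d}(x^{\ast}\circ\boldsymbol{\mu})$, and Lemma \ref{lemweak} guarantees that $x^{\ast}\circ\boldsymbol{\mu}$ inherits the upper (respectively lower) continuity of $\boldsymbol{\mu}$. The scalar monotone convergence theorems for the Choquet integral then yield $x^{\ast}(I_{n})\rightarrow x^{\ast}(I)$. Since $E$ is regularly ordered, its dual cone is generating by Lemma \ref{lem1}$(b)$, so $E^{\ast}=E_{+}^{\ast}-E_{+}^{\ast}$ and the convergence extends from positive functionals to all of $E^{\ast}$; that is, $I_{n}-I\rightarrow0$ in case $(a)$, and $I-I_{n}\rightarrow0$ in case $(b)$, weakly.

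The main obstacle, and the point where the structure of $E$ is genuinely used, is the final upgrade from weak to norm convergence. Having arranged that the relevant sequence is decreasing, positive and weakly null, I would invoke the Dini-type Lemma \ref{lem1}$(e)$ to conclude that its norm tends to $0$, which is exactly the asserted upper (respectively lower) continuity of the Choquet-Bochner integral. I expect the only remaining care to be in checking that the scalar convergence theorems apply under the present hypotheses, namely that the improper Riemann integrals defining the Choquet integrals stay finite and pass to the monotone limit.
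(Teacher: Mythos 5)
Your proposal is correct and follows essentially the same route as the paper: reduce to scalar Choquet integrals via positive functionals $x^{\ast}\in E_{+}^{\ast}$ (using the commutation formula and Lemma \ref{lemweak}), invoke the classical scalar results (monotone convergence for $(a)$--$(b)$, submodularity for $(c)$), and then pass back to $E$ via Lemma \ref{lem1}$(c)$ for the order inequality and the Dini-type Lemma \ref{lem1}$(e)$ to upgrade the weak convergence of the decreasing positive sequences to norm convergence. Your explicit remark that weak convergence against $E_{+}^{\ast}$ extends to all of $E^{\ast}$ because the dual cone is generating is a small point the paper leaves implicit, but otherwise the two arguments coincide.
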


\begin{proof}
$(a)$ Since $\mathbf{\mu}$ is an upper continuous capacity and $(f_{n})_{n}$
is a nonincreasing sequence of measurable functions that converges pointwise
to the measurable function $f$ it follows that
\[
\boldsymbol{\mu}\left(  \{x\in A:f_{n}(x)\geq t\}\right)  \searrow
\boldsymbol{\mu}\left(  \{x\in A:f(x)\geq t\}\right)
\]
in the norm topology. Taking into account the property of monotonicity of the
Choquet-Bochner integral (already noticed in Lemma \ref{lemgenprop} $(a))$ we
have%
\[
(\operatorname*{CB})\int_{A}f_{n}\mathrm{d}\boldsymbol{\mu}\geq
(\operatorname*{CB})\int_{A}f_{n}\mathrm{d}\boldsymbol{\mu}\geq\mathbf{\cdots
}\geq(\operatorname*{CB})\int_{A}f\mathrm{d}\boldsymbol{\mu}\mathbf{,}%
\]

so by Bepo Levi's monotone convergence theorem from the theory of Lebesgue
integral (see \cite{Din}, Theorem 2, p. 133) it follows that
\[
x^{\ast}\left(  (\operatorname*{CB})\int_{A}f_{n}\mathrm{d}\boldsymbol{\mu
}\right)  =(\operatorname*{C})\int_{A}f_{n}\mathrm{d}\mu^{\ast}\rightarrow
(\operatorname*{C})\int_{A}f\mathrm{d}\mu^{\ast}=x^{\ast}\left(
(\operatorname*{CB})\int_{A}f\mathrm{d}\boldsymbol{\mu}\right)
\]
for all $x^{\ast}\in E_{+}^{\ast}$. The conclusion of the assertion $(c)$ is
now a direct consequence of the generalized Dini's lemma (see Lemma \ref{lem1}
$(e)$).

$(b)$ The argument is similar to that used to prove the assertion $(a)$.

$(c)$ Since $\Sigma$ is an algebra, both functions $\inf\{f,g\}$ and
$\sup\left\{  f,g\right\}  $ are measurable. The fact that $\mathbf{\mu}$ is
submodular implies
\begin{multline*}
\boldsymbol{\mu}\left(  \left\{  x:\sup\{f,g\}(x)\geq t\right\}  \right)
+\boldsymbol{\mu}\left(  \left\{  x:\inf\{f,g\}(x)\geq t\right\}  \right) \\
=\boldsymbol{\mu}\left(  \left\{  x:f(x)\geq t\right\}  \cup\left\{
x:g(x)\geq t\right\}  \right)  +\boldsymbol{\mu}\left(  \left\{  x:f(x)\geq
t\right\}  \cap\left\{  x:g(x)\geq t\right\}  \right) \\
\leq\boldsymbol{\mu}\left(  \left\{  x:f(x)\geq t\right\}  \right)
+\boldsymbol{\mu}\left(  \left\{  x:g(x)\geq t\right\}  \right)  ,
\end{multline*}
and the same works when $\boldsymbol{\mu}$ is replaced by $\mu^{\ast}=x^{\ast
}\circ\boldsymbol{\mu},$ where $x^{\ast}\in E_{+}^{\ast}$ is arbitrarily
fixed. Integrating side by side the last inequality it follows that%
\[
\left(  \operatorname*{C}\right)  \int_{A}\sup\left\{  f,g\right\}
\mathrm{d}\mu^{\ast}+\left(  \operatorname*{C}\right)  \int_{A}\inf
\{f,g\}\mathrm{d}\mu^{\ast}\leq\left(  \operatorname*{C}\right)  \int
_{A}f\mathrm{d}\mu^{\ast}+(\operatorname*{C})\int_{A}g\mathrm{d}\mu^{\ast},
\]
which yields (via Lemma \ref{lem1} $(c))$ the submodularity of the
Choquet-Bochner integral.
\end{proof}

The property of subadditivity of the Choquet-Bochner integral makes the
objective of the following result:

\begin{theorem}
\label{subadthm}\emph{(The Subadditivity Theorem) }If $\boldsymbol{\mu}$ is a
submodular capacity, then the associated Choquet-Bochner integral is
subadditive, that is,%
\[
\left(  \operatorname*{CB}\right)  \int_{A}(f+g)\mathrm{d}\boldsymbol{\mu}%
\leq\left(  \operatorname*{CB}\right)  \int_{A}f\mathrm{d}\boldsymbol{\mu
}+\left(  \operatorname*{CB}\right)  \int_{A}g\mathrm{d}\boldsymbol{\mu}%
\]
whenever $f$, $g$ and $f+g$ are Choquet-Bochner integrable functions and
$A\in\Sigma$.

In addition, when $E$ is a Banach lattice and $f$, $g,$ $f-g$ and $g-f$ are
Choquet-Bochner integrable functions, then the following integral analog of
the modulus inequality holds true:
\[
\left\vert (\operatorname*{CB})\int_{A}f\mathrm{d}\boldsymbol{\mu
}-(\operatorname*{CB})\int_{A}g\mathrm{d}\boldsymbol{\mu}\right\vert
\leq(\operatorname*{CB})\int_{A}|f-g|\mathrm{d}\mu
\]
for all $A\in\Sigma$. In particular,
\[
\left\vert (\operatorname*{CB})\int_{A}f\mathrm{d}\boldsymbol{\mu}\right\vert
\leq(\operatorname*{CB}\int_{A}|f|\mathrm{d}\boldsymbol{\mu}%
\]
whenever $f$ and $-f$ are Choquet-Bochner integrable functions and $A\in
\Sigma.$
\end{theorem}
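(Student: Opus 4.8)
The plan is to reduce both assertions to the corresponding facts about the scalar Choquet integral by testing against the positive functionals $x^{\ast}\in E_{+}^{\ast}$, exactly as was done in the proof of Theorem \ref{thmtransfer}. For the subadditivity inequality, fix $x^{\ast}\in E_{+}^{\ast}$ and set $\mu^{\ast}=x^{\ast}\circ\boldsymbol{\mu}$. By Lemma \ref{lemweak} the scalar capacity $\mu^{\ast}$ is submodular, so the classical theorem of Denneberg \cite{Denn} that the Choquet integral with respect to a submodular capacity is subadditive applies to it. Using the commutation formula (\ref{functcomm}) I obtain
\[
x^{\ast}\!\left((\operatorname*{CB})\int_{A}(f+g)\mathrm{d}\boldsymbol{\mu}\right)=(\operatorname*{C})\int_{A}(f+g)\mathrm{d}\mu^{\ast}\leq(\operatorname*{C})\int_{A}f\mathrm{d}\mu^{\ast}+(\operatorname*{C})\int_{A}g\mathrm{d}\mu^{\ast},
\]
and the right-hand side equals $x^{\ast}\!\left((\operatorname*{CB})\int_{A}f\mathrm{d}\boldsymbol{\mu}\right)+x^{\ast}\!\left((\operatorname*{CB})\int_{A}g\mathrm{d}\boldsymbol{\mu}\right)$ again by (\ref{functcomm}). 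Since $x^{\ast}\in E_{+}^{\ast}$ is arbitrary, assertion $(c)$ of Lemma \ref{lem1} delivers the vector inequality.

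Next, the modulus inequality, where $E$ is now a Banach lattice. I would first record that $|f-g|$ is Choquet-Bochner integrable. Indeed, for $t\geq0$ one has $\{x\in A:|f-g|(x)\geq t\}=\{f-g\geq t\}\cup\{g-f\geq t\}$, so submodularity of $\boldsymbol{\mu}$ gives the order bound $0\leq\boldsymbol{\mu}(\{|f-g|\geq t\})\leq\boldsymbol{\mu}(\{f-g\geq t\})+\boldsymbol{\mu}(\{g-f\geq t\})$; the right-hand side is Bochner integrable on $[0,\infty)$ by the hypotheses on $f-g$ and $g-f$, while strong measurability of $t\mapsto\boldsymbol{\mu}(\{|f-g|\geq t\})$ follows by truncating $|f-g|$ at level $n$ and invoking Lemma \ref{lem6}. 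Hence $u=(\operatorname*{CB})\int_{A}f\mathrm{d}\boldsymbol{\mu}$, $v=(\operatorname*{CB})\int_{A}g\mathrm{d}\boldsymbol{\mu}$ and $w=(\operatorname*{CB})\int_{A}|f-g|\mathrm{d}\boldsymbol{\mu}$ are all well defined. To prove $|u-v|\leq w$ it suffices, in the Banach lattice $E$, to prove the two one-sided bounds $u-v\leq w$ and $v-u\leq w$. Testing the first against an arbitrary $x^{\ast}\in E_{+}^{\ast}$ and using (\ref{functcomm}) reduces it to the scalar inequality $(\operatorname*{C})\int_{A}f\mathrm{d}\mu^{\ast}-(\operatorname*{C})\int_{A}g\mathrm{d}\mu^{\ast}\leq(\operatorname*{C})\int_{A}|f-g|\mathrm{d}\mu^{\ast}$, which is immediate from $f\leq g+|f-g|$ together with the monotonicity and subadditivity of the scalar Choquet integral relative to the submodular $\mu^{\ast}$. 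The bound $v-u\leq w$ is obtained by interchanging $f$ and $g$. By Lemma \ref{lem1}$(c)$ these two scalar families of inequalities give $-w\leq u-v\leq w$ in $E$, which in a vector lattice is equivalent to $|u-v|\leq w$. The final ``in particular'' statement is the special case $g=0$, for which $|f-0|=|f|$ and $(\operatorname*{CB})\int_{A}0\,\mathrm{d}\boldsymbol{\mu}=0$.

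I expect the genuine obstacle to be the integrability bookkeeping in the vector-valued setting rather than the inequalities themselves: one must make sure that the $E$-valued layer function $t\mapsto\boldsymbol{\mu}(\{|f-g|\geq t\})$ is strongly measurable and Bochner integrable, and it is precisely here that the two hypotheses ``$f-g$ and $g-f$ Choquet-Bochner integrable'' together with the submodularity of $\boldsymbol{\mu}$ are consumed. Scalarization keeps the analytic content at the level of the already-established scalar theory (Denneberg), so the only subtlety left on the scalar side is to apply the scalar subadditivity theorem to functions that need not be bounded; if $f$ and $g$ happen to be bounded, Lemma \ref{lem6} makes every integrability claim automatic and the argument becomes routine.
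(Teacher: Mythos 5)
Your proposal is correct and follows essentially the same route as the paper's proof: scalarization through arbitrary $x^{\ast}\in E_{+}^{\ast}$, Lemma \ref{lemweak} plus Denneberg's scalar subadditivity theorem, and recovery of the vector inequality via formula (\ref{functcomm}) and Lemma \ref{lem1}$(c)$, with the modulus inequality obtained from the two one-sided bounds coming from $f\le g+|f-g|$ and symmetry (the paper runs this last step at the vector level using its just-proved subadditivity and monotonicity, while you re-scalarize — an immaterial difference). Your explicit verification that $|f-g|$ is Choquet-Bochner integrable is a detail the paper's proof leaves implicit, and it is a worthwhile addition.
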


\begin{proof}
According to Lemma \ref{lemweak}, if $\boldsymbol{\mu}$ is submodular, then
every real-valued capacity $\mu^{\ast}=x^{\ast}\circ\boldsymbol{\mu}$ also is
submodular, whenever $x^{\ast}\in E_{+}^{\ast}$. Then
\[
\left(  \operatorname*{C}\right)  \int_{A}f\mathrm{d}\mu^{\ast}+\left(
\operatorname*{C}\right)  \int_{A}g\mathrm{d}\mu^{\ast}\leq\left(
\operatorname*{C}\right)  \int_{A}(f+g)\mathrm{d}\mu^{\ast},
\]
as a consequence of Theorem\emph{ }6.3, p. 75, in \cite{Denn}. Therefore the
first inequality in the statement of Theorem \ref{subadthm} is now a direct
consequence of Lemma \ref{lem1} $(c)$ and the formula (\ref{functcomm}).

For the second inequality, notice that the subadditivity property implies
\begin{align*}
(\operatorname*{CB})\int_{A}f\mathrm{d}\boldsymbol{\mu}  &
=(\operatorname*{CB})\int_{A}\left(  f-g+g\right)  \mathrm{d}\boldsymbol{\mu
}\\
&  \leq\left(  \operatorname*{CB}\right)  \int_{A}\left(  f-g\right)
\mathrm{d}\boldsymbol{\mu}+\left(  \operatorname*{CB}\right)  \int
_{A}g\mathrm{d}\boldsymbol{\mu}\mathbf{\boldsymbol{,}}%
\end{align*}
and taking into account that $f-g\leq\left\vert f-g\right\vert $ we infer that%
\[
(\operatorname*{CB})\int_{A}f\mathrm{d}\boldsymbol{\mu}-\left(
\operatorname*{CB}\right)  \int_{A}g\mathrm{d}\boldsymbol{\mu}\leq\left(
\operatorname*{CB}\right)  \int_{A}\left\vert f-g\right\vert \mathrm{d}%
\boldsymbol{\mu}.
\]
Interchanging $f$ and $g$ we also obtain%
\[
\pm((\operatorname*{CB})\int_{A}f\mathrm{d}\boldsymbol{\mu}-\left(
\operatorname*{CB}\right)  \int_{A}g\mathrm{d}\boldsymbol{\mu}%
\mathbf{\boldsymbol{)}}\leq\left(  \operatorname*{CB}\right)  \int
_{A}\left\vert f-g\right\vert \mathrm{d}\boldsymbol{\mu}%
\]
and the proof is done.
\end{proof}

\section{The integral representation of Choquet operators defined on a space
$C(X)$}

The special case when $X$ is a compact Hausdorff space and $\Sigma
=\mathcal{B}(X),$ the $\sigma$-algebra of all Borel subsets of $X,$ allows us
to shift the entire discussion concerning the Choquet-Bochner integral from
the vector space $B(\Sigma)$ (of all real-valued Borel measurable functions$)$
to the Banach lattice $C(X)$ (of all real-valued continuous functions defined
on $X$. Indeed, in this case $C(X)$ is a subspace of $B(\Sigma)$ and all nice
properties stated in Lemma \ref{lemgenprop} and in Theorem \ref{subadthm}
remain true when restricting the integral to the space $C(X).$ As a
consequence the integral operator%
\begin{equation}
\operatorname*{CB}\nolimits_{\boldsymbol{\mu}}:C(X)\rightarrow E,\text{\quad
}\operatorname*{CB}\nolimits_{\boldsymbol{\mu}}(f)=(\operatorname*{CB}%
)\int_{X}f\mathrm{d}\boldsymbol{\mu,} \label{CPoper}%
\end{equation}
associated to a vector-valued submodular capacity $\boldsymbol{\mu
}:\mathcal{B}(X)\rightarrow E,$ is a Choquet operator.

The linear and positive functionals defined on $C(X)$ can be represented
either as integrals with respect to a unique regular Borel measure (the
Riesz-Kakutani representation theorem) or as Choquet integrals (see Epstein
and Wang \cite{EW1996}).

\begin{example}
The space $c$ of all convergent sequences of real numbers can be identified
with the space of continuous functions on $\mathbb{\hat{N}}=\mathbb{N}%
\cup\{\infty\}$ $($the one-point compactification of the discrete space
$\mathbb{N)}$. The functional
\[
I:c\rightarrow\mathbb{R},\text{\quad}I(x)=\lim_{n\rightarrow\infty}x(n)
\]
is linear and monotone \emph{(}therefore continuous by Lemma \emph{\ref{lem1*}%
)} and its Riesz representation is%
\[
I(x)=\int_{\mathbb{\hat{N}}}x(n)\mathrm{d}\delta_{\infty}(n),
\]
where $\delta_{\infty}$ is the Dirac measure concentrated at $\infty,$ that
is, $\delta_{\infty}(A)=1$ if $A\in\mathcal{P}(\mathbb{\hat{N}})$ and
$\{\infty\}\in A$ and $\delta_{\infty}(A)=0$ if $\{\infty\}\notin A.$
Meantime, $I$ admits the Choquet representation%
\[
I(x)=(\operatorname*{C})\int x(n)d\mu(n)
\]
where $\mu$ is the capacity defined on the power set $\mathcal{P}%
(\mathbb{\hat{N}})$ by $\mu(A)=0$ if $A$ is finite and $\mu(A)=1$ otherwise.
\end{example}

As the Riesz-Kakutani representation theorem also holds in the case of linear
and positive operators $T:C(X)\rightarrow E$, it seems natural to search for
an analogue in the case of Choquet operators. The answer is provided by the
following representation theorem that extends a result due to Epstein and Wang
\cite{EW1996} from functionals to operators:

\begin{theorem}
\label{thmEWgen}Let $X$ be a compact Hausdorff space and $E$ be a Banach
lattice with order continuous norm. Then for every comonotonic additive and
monotone operator $I:C(X)\rightarrow E$ with $I(1)>0$ there exists a unique
upper continuous vector capacity $\boldsymbol{\mu}:\Sigma_{up}^{+}%
(X)\rightarrow E_{+}$ such that%
\begin{equation}
I(f)=(\operatorname*{CB})\int_{X}f\mathrm{d}\boldsymbol{\mu}\text{\quad for
all }f\in C(X). \label{reprformula}%
\end{equation}
Moreover, $\boldsymbol{\mu}$\ admits a unique extension to $\mathcal{B}(X)$
$($also denoted $\boldsymbol{\mu})$ that fulfils the following two properties
of regularity:

$(R1)$ $\boldsymbol{\mu}(A)=\sup\left\{  \boldsymbol{\mu}(K):K\text{ closed,
}K\subset A\right\}  $\ for all $A\in\mathcal{B}(X);$

$(R2)$ $\boldsymbol{\mu}(K)=\inf\left\{  \boldsymbol{\mu}(O):O\text{ open,
}O\supset K\right\}  $\ for$\ $all closed sets $K.$
\end{theorem}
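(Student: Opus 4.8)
The plan is to reduce the vector-valued statement to the scalar Epstein–Wang theorem by composing with positive functionals $x^{\ast}\in E_{+}^{\ast}$, and then to reassemble the scalar capacities into a single $E_{+}$-valued capacity. The order-continuity of the norm on $E$ is what makes the reassembly legitimate, so the main work lies in showing that the pointwise-defined vector capacity genuinely takes values in $E$ (not merely in a bidual) and that the resulting Choquet–Bochner integral reproduces $I$.

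\medskip

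First I would fix an arbitrary $x^{\ast}\in E_{+}^{\ast}$ and observe that $x^{\ast}\circ I:C(X)\to\mathbb{R}$ inherits comonotonic additivity and monotonicity from $I$, with $(x^{\ast}\circ I)(1)=x^{\ast}(I(1))\geq0$. By the scalar representation theorem of Epstein and Wang \cite{EW1996}, there is a unique upper continuous capacity $\mu_{x^{\ast}}:\Sigma_{up}^{+}(X)\to\mathbb{R}_{+}$ with $(x^{\ast}\circ I)(f)=(\operatorname*{C})\int_{X}f\,\mathrm{d}\mu_{x^{\ast}}$ for all $f\in C(X)$. The natural candidate for the vector capacity is then defined, for each closed set $S=\{f\geq t\}\in\Sigma_{up}^{+}(X)$, by requiring
\[
x^{\ast}\bigl(\boldsymbol{\mu}(S)\bigr)=\mu_{x^{\ast}}(S)\quad\text{for all }x^{\ast}\in E_{+}^{\ast}.
\]
I would first verify consistency: by uniqueness in the scalar theorem, the map $x^{\ast}\mapsto\mu_{x^{\ast}}(S)$ is additive and positively homogeneous on the cone $E_{+}^{\ast}$, hence extends (since $E^{\ast}=E_{+}^{\ast}-E_{+}^{\ast}$ by Lemma \ref{lem1}$(b)$) to a linear functional on $E^{\ast}$; that functional is weak-$\ast$ continuous because $x^{\ast}\mapsto\mu_{x^{\ast}}(S)$ is bounded by $\|x^{\ast}\|\cdot I(1)$-type estimates, so it is represented by a genuine element $\boldsymbol{\mu}(S)\in E$. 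Positivity of each $\mu_{x^{\ast}}$ together with Lemma \ref{lem1}$(c)$ gives $\boldsymbol{\mu}(S)\in E_{+}$, and monotonicity $A\subset B\Rightarrow\boldsymbol{\mu}(A)\leq\boldsymbol{\mu}(B)$ follows the same way from the scalar monotonicity, so $\boldsymbol{\mu}$ is a vector capacity with $\boldsymbol{\mu}(\emptyset)=0$.

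\medskip

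Next I would prove the representation formula \eqref{reprformula}. For each $f\in C(X)$ and each $x^{\ast}\in E_{+}^{\ast}$, formula \eqref{functcomm} gives
\[
x^{\ast}\Bigl((\operatorname*{CB})\int_{X}f\,\mathrm{d}\boldsymbol{\mu}\Bigr)=(\operatorname*{C})\int_{X}f\,\mathrm{d}(x^{\ast}\circ\boldsymbol{\mu})=(\operatorname*{C})\int_{X}f\,\mathrm{d}\mu_{x^{\ast}}=x^{\ast}(I(f)),
\]
where the middle equality uses $x^{\ast}\circ\boldsymbol{\mu}=\mu_{x^{\ast}}$ by construction. Since this holds for every $x^{\ast}\in E_{+}^{\ast}$ and the positive cone is separating by Lemma \ref{lem1}$(c)$, we conclude $(\operatorname*{CB})\int_{X}f\,\mathrm{d}\boldsymbol{\mu}=I(f)$. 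Upper continuity of $\boldsymbol{\mu}$ then follows from Lemma \ref{lemweak}: each $\mu_{x^{\ast}}$ is upper continuous from Epstein–Wang, so $x^{\ast}\circ\boldsymbol{\mu}$ is upper continuous for all $x^{\ast}\in E_{+}^{\ast}$, and Lemma \ref{lemweak} transfers this to $\boldsymbol{\mu}$. Uniqueness is immediate: any two representing capacities agree after composition with every $x^{\ast}\in E_{+}^{\ast}$ (by the scalar uniqueness), hence coincide.

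\medskip

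For the regular extension to $\mathcal{B}(X)$, I would apply the scalar Epstein–Wang extension to each $\mu_{x^{\ast}}$, obtaining regular Borel capacities $\mu_{x^{\ast}}$ on $\mathcal{B}(X)$ satisfying $(R1)$–$(R2)$ in the scalar sense, and then define $\boldsymbol{\mu}(A)\in E$ for $A\in\mathcal{B}(X)$ by the same weak-$\ast$ procedure. The point requiring care is that the \emph{supremum} in $(R1)$ must be realized in the norm topology of $E$, not merely weakly. This is precisely where order continuity of the norm enters: the net $\{\boldsymbol{\mu}(K):K\subset A\text{ closed}\}$ is increasing and order-bounded by $\boldsymbol{\mu}(X)=I(1)$, so by Lemma \ref{lem2} it has a supremum in $E$, and the weak-$\ast$ value I constructed agrees with this order-supremum because $x^{\ast}(\sup_{K}\boldsymbol{\mu}(K))=\sup_{K}\mu_{x^{\ast}}(K)=\mu_{x^{\ast}}(A)$ for every $x^{\ast}\in E_{+}^{\ast}$.

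\medskip

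\textbf{The main obstacle} I anticipate is the step identifying the pointwise-defined functional $x^{\ast}\mapsto\mu_{x^{\ast}}(S)$ with an actual element of $E$ rather than of the bidual $E^{\ast\ast}$. Establishing weak-$\ast$ continuity of this functional — equivalently, that it lies in the canonical image of $E$ inside $E^{\ast\ast}$ — is the crux, and I would handle it by exhibiting $\boldsymbol{\mu}(S)$ directly as the order-supremum $\sup\{(\operatorname*{CB})\int_{X}f\,\mathrm{d}\boldsymbol{\mu}:f\in C(X),\ 0\leq f\leq\chi_{S}\}$-type expression obtained from the scalar data, using Lemma \ref{lem2} to guarantee its existence in $E$, and only afterward verifying $x^{\ast}\circ\boldsymbol{\mu}=\mu_{x^{\ast}}$. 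The compatibility of the regularity suprema with the norm topology, via order continuity, is the secondary delicate point.
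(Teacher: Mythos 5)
Your reduction-to-scalar strategy runs in the opposite direction from the paper's proof, and the step you yourself flag as ``the main obstacle'' is a genuine gap that your proposal does not close. The claim that the linear functional $x^{\ast}\mapsto\mu_{x^{\ast}}(S)$ on $E^{\ast}$ is weak-$\ast$ continuous ``because it is bounded'' is false: boundedness only places this functional in $E^{\ast\ast}$, and weak-$\ast$ continuity is precisely the property that distinguishes the canonical image of $E$ inside $E^{\ast\ast}$; it cannot be deduced from norm estimates. Your fallback --- realizing $\boldsymbol{\mu}(S)$ as an order-supremum of the form $\sup\{(\operatorname{CB})\int_X f\,\mathrm{d}\boldsymbol{\mu}: f\in C(X),\ 0\leq f\leq\chi_{S}\}$ --- is circular as written (it uses $\boldsymbol{\mu}$ to define $\boldsymbol{\mu}$), and even read as $\sup\{I(f): f\in C(X),\ 0\leq f\leq\chi_{S}\}$ it approximates from the wrong side: for a closed set $S$, every continuous $f$ with $0\leq f\leq\chi_{S}$ satisfies $f\leq 0$ at each boundary point of $S$ (any neighborhood of such a point meets the complement of $S$, where $f\leq 0$), so the pointwise supremum of such functions is $\chi_{\operatorname{int}S}$, not $\chi_{S}$; for closed sets with empty interior this definition collapses entirely.

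The paper closes exactly this gap by constructing $\boldsymbol{\mu}(K)$ directly as a norm limit in $E$, approximating $\chi_K$ from \emph{above}: for $K\in\Sigma_{up}^{+}(X)$ one takes Urysohn-type functions $f_{n}^{K}\in C(X)$ decreasing pointwise to $\chi_{K}$ (formula (\ref{Ury})); then $(I(f_{n}^{K}))_{n}$ is nonincreasing and bounded below by $0$, hence order bounded, hence norm convergent by the order continuity of the norm of $E$, and one sets $\boldsymbol{\mu}(K)=\lim_{n\rightarrow\infty}I(f_{n}^{K})$, the limit being independent of the chosen sequence by the Dini-type Lemma \ref{lem-aux-2} (which in turn needs the positive homogeneity and upper continuity of $I$ supplied by Lemmas \ref{lem-aux-1} and \ref{lem-aux-3}). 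Only afterwards does the paper compose with $x^{\ast}\in E_{+}^{\ast}$ and invoke the scalar theorem of Zhou: since the scalar capacity $\nu^{\ast}$ representing $x^{\ast}\circ I$ is built by the same approximation process, $\nu^{\ast}(K)=\lim_{n}x^{\ast}(I(f_{n}^{K}))=x^{\ast}(\boldsymbol{\mu}(K))$, which yields both the consistency $x^{\ast}\circ\boldsymbol{\mu}=\mu_{x^{\ast}}$ you wanted and then the representation formula via Lemma \ref{lem1} $(c)$. In other words, the scalar theory is used to \emph{verify} the representation, not to \emph{construct} the capacity; membership of $\boldsymbol{\mu}(K)$ in $E$ comes from order continuity applied to a decreasing sequence, not from any weak-$\ast$ argument. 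If you reorganize your proof so that $\boldsymbol{\mu}$ is defined by this norm limit first, the remainder of your outline (representation, uniqueness, Borel extension and the regularity properties) goes through essentially as you describe.
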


Recall that $\Sigma_{up}^{+}(X)$ represents the lattice of upper contour sets
associated to the continuous functions defined on $X.$ This lattice coincides
with the lattice of all closed subsets of $X$ when $X$ is compact and metrizable.

The proof of Theorem \ref{thmEWgen} needs several auxiliary results and will
be detailed at the end of this section.

\begin{lemma}
\label{lem-aux-1*}Let $E$ be an ordered Banach space. Then every monotone and
translation invariant operator $I:C(X)\rightarrow E$ is Lipschitz continuous.
\end{lemma}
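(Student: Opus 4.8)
The plan is to exploit the order structure of $C(X)$: any two continuous functions sit between constant translates of one another, and this sandwich can be pushed through $I$ using only monotonicity and translation invariance. Throughout I read translation invariance as $I(h+c\cdot 1)=I(h)+c\,I(1)$ for all $h\in C(X)$ and $c\in\mathbb{R}$, consistent with the calibration of the Choquet--Bochner integral. As a preliminary remark, applying this with $h=0$ and $c=1$ gives $I(1)=I(0)+I(1)$, so $I(0)=0$; since $0\leq 1$ in $C(X)$, monotonicity then yields $I(1)\geq I(0)=0$, i.e. $I(1)\in E_{+}$.

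First I would fix $f,g\in C(X)$ and set $\delta=\left\Vert f-g\right\Vert _{\infty}$, which is finite because $X$ is compact. Since $\left\vert f(x)-g(x)\right\vert \leq\delta$ for every $x\in X$, we have the pointwise, hence order, inequalities $g-\delta\cdot 1\leq f\leq g+\delta\cdot 1$ in $C(X)$. Applying the monotonicity of $I$, and then translation invariance in the form $I(g\pm\delta\cdot 1)=I(g)\pm\delta\,I(1)$, I obtain the two-sided order bound $-\delta\,I(1)\leq I(f)-I(g)\leq\delta\,I(1)$.

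It remains to convert this order bound $-y\leq u\leq y$, where $u=I(f)-I(g)$ and $y=\delta\,I(1)\in E_{+}$, into a norm estimate. The only point requiring care is that the monotonicity of the norm on an ordered Banach space controls only \emph{positive} elements, whereas $u$ itself need not be positive; this is the sole obstacle, and it is elementary. I would sidestep it by the shift $u\mapsto u+y$: from $-y\leq u\leq y$ we get $0\leq u+y\leq 2y$, whence $\left\Vert u+y\right\Vert \leq 2\left\Vert y\right\Vert $ by monotonicity of the norm, and therefore $\left\Vert u\right\Vert \leq\left\Vert u+y\right\Vert +\left\Vert y\right\Vert \leq 3\left\Vert y\right\Vert =3\left\Vert I(1)\right\Vert \,\delta$. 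This proves $\left\Vert I(f)-I(g)\right\Vert \leq 3\left\Vert I(1)\right\Vert \,\left\Vert f-g\right\Vert _{\infty}$, so $I$ is Lipschitz. Alternatively, one may first renorm $E$ to be regularly ordered via Lemma \ref{Lem0} and use $\left\Vert u\right\Vert =\inf\{\left\Vert z\right\Vert :-z\leq u\leq z\}$ to read off the sharper constant $\left\Vert I(1)\right\Vert$ directly from the bound $-y\leq u\leq y$; either route delivers the Lipschitz continuity of $I$.
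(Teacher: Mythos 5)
Your proof is correct and follows essentially the same route as the paper's: sandwich $f$ between the constant translates $g\pm\Vert f-g\Vert\cdot 1$, then push this through monotonicity and translation invariance to obtain the two-sided order bound $-\Vert f-g\Vert\, I(1)\leq I(f)-I(g)\leq\Vert f-g\Vert\, I(1)$. The one place you go beyond the paper is the final step: the paper concludes by writing $\vert I(f)-I(g)\vert\leq I(1)\cdot\Vert f-g\Vert$ and stops, which is imprecise since $E$ is only an ordered Banach space, where the modulus $\vert\cdot\vert$ need not exist; your explicit conversion of the order bound into the norm estimate $\Vert I(f)-I(g)\Vert\leq 3\Vert I(1)\Vert\,\Vert f-g\Vert$, via the shift $u\mapsto u+y$ and monotonicity of the norm on $E_{+}$, supplies exactly the step the paper leaves implicit. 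You also rightly dispense with the paper's superfluous decreasing sequence $\alpha_{n}\downarrow\Vert f-g\Vert$, working directly with $\delta=\Vert f-g\Vert$; your preliminary observation that $I(0)=0$ and hence $I(1)\geq 0$ is likewise a detail the paper uses tacitly but never records.
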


\begin{proof}
Given $f,g\in C(X),$ one can choose a decreasing sequence $(\alpha_{n})_{n}$
of positive numbers such that $\alpha_{n}\downarrow\left\Vert f-g\right\Vert
.$ Then $f\leq g+\left\Vert f-g\right\Vert \leq g+\alpha_{n}\cdot1,$ which
implies%
\[
I(f)\leq I(g+\alpha_{n}\cdot1)=I(g)+\alpha_{n}I(1)
\]
due to the properties of monotonicity and translation invariance and positive
homogeneity of $I.$ Since the role of $f$ and $g$ is symmetric, this leads to
the fact that $\left\vert I(f)-I(g)\right\vert \leq I(1)\cdot\alpha_{n}$ for
all $n,$ whence by passing to the limit as $n\rightarrow\infty,$ we conclude
that%
\[
\left\vert I(f)-I(g)\right\vert \leq I(1)\cdot\left\Vert f-g\right\Vert .
\]

\end{proof}

\begin{lemma}
\label{lem-aux-1}Let $E$ be an ordered Banach space. Then every comonotonic
additive and monotone operator $I:C(X)\rightarrow E$ is positively homogeneous.
\end{lemma}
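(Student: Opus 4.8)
The plan is to extract everything from two facts about comonotonicity: a function is always comonotone with itself, and every constant function is comonotone with every continuous function. With these, I would first obtain rational homogeneity for arbitrary $f$, and then promote it to real homogeneity by a monotonicity squeeze, handling nonnegative functions directly and reducing the general case to that one by a constant shift. To begin, since $f$ is comonotone with itself, comonotonic additivity gives $I(2f)=2I(f)$, and an induction yields $I(nf)=nI(f)$ for all $n\in\mathbb{N}$, because at each step $kf$ and $f$ are comonotone (as $(kf(s)-kf(t))(f(s)-f(t))=k(f(s)-f(t))^{2}\ge0$). From $mI(f)=I(mf)=I(n\cdot\tfrac{m}{n}f)=nI(\tfrac{m}{n}f)$ I then get $I(qf)=qI(f)$ for every rational $q\ge0$ and every $f\in C(X)$. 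Taking $f=0$ shows $I(0)=0$, so monotonicity upgrades to the positivity statement $h\ge0\Rightarrow I(h)\ge I(0)=0$.

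Next I would treat the case $f\ge0$. For a real $a\ge0$ and rationals $q\le a\le r$ the pointwise inequalities $qf\le af\le rf$ hold (this is exactly where $f\ge0$ is needed), so monotonicity together with the previous step gives $qI(f)\le I(af)\le rI(f)$ in $E$. Letting $q\uparrow a$ and $r\downarrow a$ and using that the positive cone of $E$ is closed, I obtain both $I(af)-aI(f)\ge0$ and $aI(f)-I(af)\ge0$, and properness of the cone forces $I(af)=aI(f)$.

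To pass to a general $f$ I would first pin down $I$ on constants by setting $\varphi(t)=I(t\cdot1)$. Any two constants are comonotone, so $\varphi$ is additive on $\mathbb{R}$, while monotonicity of $I$ makes $\varphi$ nondecreasing; the same rational-squeeze argument (now with $I(1)\ge0$) then gives $\varphi(t)=tI(1)$ for all real $t$. For arbitrary $f\in C(X)$, compactness of $X$ makes $f$ bounded, so $g:=f+\left\Vert f\right\Vert\cdot1\ge0$. Since the constant $(-a\left\Vert f\right\Vert)\cdot1$ is comonotone with $ag$, comonotonic additivity combined with the homogeneity already proved for the nonnegative function $g$ gives $I(af)=I\bigl(ag+(-a\left\Vert f\right\Vert)1\bigr)=aI(g)+\varphi(-a\left\Vert f\right\Vert)=aI(g)-a\left\Vert f\right\Vert I(1)$, and likewise $I(f)=I(g)-\left\Vert f\right\Vert I(1)$. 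Multiplying the second identity by $a$ and comparing yields $I(af)=aI(f)$, which completes the proof.

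The step I expect to be the genuine obstacle is the passage from rational to real scalars, since this is where both structures of $E$ are actually used: the order structure, through monotonicity applied to the pointwise comparisons $qf\le af\le rf$ (which is precisely why the sign condition $f\ge0$ must be arranged first, via the shift), and the topology, through the closedness and properness of the cone, which are what convert the two-sided limit of order inequalities into an equality. The remaining manipulations are routine bookkeeping with the comonotonicity relations.
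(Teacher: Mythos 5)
Your proof is correct and takes essentially the same route as the paper's: comonotonic additivity plus induction gives rational homogeneity, a rational squeeze via monotonicity (using that the positive cone is closed and proper) gives real homogeneity for nonnegative $f$, and a constant shift reduces the general case to the nonnegative one. The only cosmetic difference is that you evaluate $I$ on negative constants through the additive function $\varphi(t)=I(t\cdot 1)$, whereas the paper avoids negative constants by writing $\alpha I(f+\lambda)$ in two ways and cancelling the term $\alpha\lambda I(1)$.
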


\begin{proof}
Let $f\in C(X)_{+}.$ Since $I$ is comonotonic additive, we get
$I(0)=I(0+0)=2\cdot I(0),$ which implies $I(0)=0$. As a consequence, $I(0\cdot
f)=0=0\cdot I(f).$ Then the same argument shows that $I(2f)=I(f+f)=2I(f)$ and
by mathematical induction we infer that $I(pf)=pI(f)$ for all $p\in\mathbb{N}$.

Now, consider the case of positive rational numbers $r=p/q,$ where
$p,q\in\mathbb{N}$. Then $I(f)=I\left(  q\cdot\frac{1}{q}f\right)  =qI\left(
\frac{1}{q}f\right)  $, which implies $I\left(  \frac{1}{q}f\right)  =\frac
{1}{q}\cdot I(f)$. Therefore
\[
I\left(  \frac{p}{q}f\right)  =pI\left(  \frac{1}{q}f\right)  =\frac{p}%
{q}\cdot I(f).
\]

Passing to the case of an arbitrary positive number $\alpha,$ let us choose a
decreasing sequence $(r_{n})_{n}$ of rationals converging to $\alpha$. Then
$r_{n}f\geq\alpha f$ for all $n,$ which yields $r_{n}I(f)=I(r_{n}f)\geq
I(\alpha f)$ $n\in\mathbb{N}$. Passing here to limit (in the norm of $E$) it
follows $\alpha I(f)\geq I(\alpha f)$. On the other hand, considering a
sequence of positive rational numbers $s_{n}\nearrow\alpha$ and reasoning as
above, we easily obtain $\alpha I(f)\leq I(\alpha f)$, which combined with the
previous inequality proves the assertion of Lemma \ref{lem-aux-1} in the case
of nonnegative functions.

When $f\in C(X)$ is arbitrary, one can choose a positive number $\lambda$ such
that $f+\lambda\geq0.$ By the above reasoning and the property of comonotonic
additivity, for all $\alpha\geq0,$
\begin{align*}
\alpha I(f)+\alpha\lambda I(1)  &  =\alpha I\left(  f+\lambda\right)
=I(\alpha\left(  f+\lambda\right)  )\\
&  =I(\alpha f+\alpha\lambda)=I(\alpha f)+\alpha\lambda I(1),
\end{align*}
which ends the proof of Lemma \ref{lem-aux-1}.
\end{proof}

\begin{lemma}
\label{lem-aux-3}Let $E$ be a Banach lattice with order continuous norm. Then
every monotone, positively homogeneous and translation invariant operator
$I:C(X)\rightarrow E$ is weakly compact and upper continuous.
\end{lemma}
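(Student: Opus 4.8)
The plan is to establish the two conclusions independently: the weak compactness will rest on the order structure of $E$, while the upper continuity will reduce to the classical Dini theorem together with the Lipschitz estimate already recorded in Lemma \ref{lem-aux-1*}.

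For weak compactness, I would observe that monotonicity converts a norm bound in $C(X)$ into an order bound in $E$. Indeed, if $f\in C(X)$ satisfies $\|f\|\le 1$, then $-1\le f\le 1$ pointwise, and monotonicity of $I$ gives $I(-1)\le I(f)\le I(1)$; thus $I$ carries the closed unit ball of $C(X)$ into the closed order interval $[I(-1),I(1)]$. (Positive homogeneity forces $I(0)=0$, and then translation invariance applied to the constant $-1$ with shift $c=1$ gives $I(0)=I(-1)+I(1)$, so that $I(-1)=-I(1)$ and the interval is symmetric; this is a convenience, not a necessity.) Since $E$ is a Banach lattice with order continuous norm, Lemma \ref{lem2} ensures that every closed order interval of $E$ is weakly compact. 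Hence the image of the unit ball lies in a weakly compact set and is therefore relatively weakly compact, which is exactly the assertion that $I$ is weakly compact. More generally, positive homogeneity sends the ball of radius $r$ into $[rI(-1),rI(1)]$, so every bounded subset of $C(X)$ has relatively weakly compact image.

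For upper continuity, let $(f_n)_n$ be a nonincreasing sequence in $C(X)$ converging pointwise to some $f\in C(X)$. Because $X$ is compact and all the $f_n$ and $f$ are continuous, the classical Dini theorem promotes this pointwise monotone convergence to uniform convergence, so that $\|f_n-f\|\to 0$. The operator $I$ is monotone and translation invariant, hence Lipschitz continuous by Lemma \ref{lem-aux-1*}; consequently $\|I(f_n)-I(f)\|\le \|I(1)\|\cdot\|f_n-f\|\to 0$, which is precisely the upper continuity of $I$.

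The only genuinely substantive ingredient is Lemma \ref{lem2}: it is the order continuity of the norm that renders closed order intervals weakly compact, and this is indispensable for the weak compactness claim. The main conceptual step is the realization that monotonicity alone transforms the norm constraint $\|f\|\le 1$ into an order constraint on $I(f)$, after which the order-theoretic lemma does all the work. The upper continuity, by contrast, is a routine combination of Dini's theorem with the previously established Lipschitz bound and presents no real obstacle.
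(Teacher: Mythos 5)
Your proof is correct, and for weak compactness it is exactly the paper's argument: monotonicity turns the norm bound $\|f\|\le 1$ into the order bound $I(-1)\le I(f)\le I(1)$, and Lemma \ref{lem2} (closed order intervals are weakly compact when the norm is order continuous) finishes the job; your extra observation that $I(-1)=-I(1)$ is harmless but not needed. For upper continuity the two arguments share the first step (Dini's theorem upgrading pointwise monotone convergence to uniform convergence) but then diverge: you invoke the Lipschitz estimate of Lemma \ref{lem-aux-1*} to get $\|I(f_n)-I(f)\|\le\|I(1)\|\,\|f_n-f\|\to 0$, whereas the paper uses monotonicity to sandwich $I(f)\le I(f_n)\le I(f)+\varepsilon I(1)$ and then appeals to the order continuity of the norm of $E$ to produce the limit $\lim_n I(f_n)$ before squeezing. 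Your route is leaner and makes transparent that order continuity of the norm is needed only for the weak-compactness half; indeed, even the paper's own sandwich $0\le I(f_n)-I(f)\le\varepsilon I(1)$ yields $\|I(f_n)-I(f)\|\le\varepsilon\|I(1)\|$ directly from monotonicity of the norm on the positive cone, so the appeal to order continuity is dispensable there as well. One small point worth making explicit in your write-up: Lemma \ref{lem-aux-1*} delivers the \emph{order} inequality $|I(f)-I(g)|\le\|f-g\|\cdot I(1)$, and passing to the norm inequality you use requires the lattice-norm property ($|x|\le|y|$ implies $\|x\|\le\|y\|$), which is available here precisely because $E$ is a Banach lattice rather than a general ordered Banach space.
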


\begin{proof}
The weak compactness of $I$ follows from the fact that $I$ maps the closed
order interval in $C(X)$ (that is, all closed balls) into closed order
intervals in $E$ and all such intervals are weakly compact in $E$. See Lemma 3.

For the property of upper continuity, let $(f_{n})_{n}$ be any nonincreasing
sequence of functions in $C(X)$, which converges pointwise to a continuous
function $f.$ By Dini's lemma, the sequence $(f_{n})_{n}$ is convergent to $f$
in the norm topology of $C(X).$ Therefore, for each $\varepsilon>0$ there is
an index $N$ such that $f\leq f_{n}<f+\varepsilon$ for all $n\geq N.$ Since
$I$ is monotone it follows that%
\begin{equation}
I(f)\leq I(f_{n})\leq I(f+\varepsilon)=I(f)+\varepsilon\cdot I(1)\text{ for
all }n\geq N, \label{consDini}%
\end{equation}
where $1$ is the unit of $C(X).$ Taking into account that $E$ has order
continuous norm and $I(f_{n})\geq I(f_{n+1})\geq I(f)$ for all $n$ it follows
that the limit $\lim_{n\rightarrow\infty}I(f_{n})$ exists in $E$ and
$\lim_{n\rightarrow\infty}I(f_{n})\geq I(f)$. Combining this fact with
(\ref{consDini}) we infer that $I(f)\leq\lim_{n\rightarrow\infty}I(f_{n})\leq
I(f)+\varepsilon I(1).$ Since $\varepsilon>0$ was chosen arbitrarily we
conclude that $\lim_{n\rightarrow\infty}I(f_{n})=I(f).$
\end{proof}

The next result, was stated for real-valued functionals in \cite{Zhou}, Lemma
1 (and attributed by him to Masimo Marinacci). For the convenience of the
reader we include here the details.

\begin{lemma}
\label{lem-aux-2}Let $E$ be an ordered Banach space.

$(a)$ Suppose that $I:C(X)\rightarrow E$ is a monotone, positively homogeneous
and translation invariant operator. The following two properties are equivalent:

$(a_{1})$ $\lim_{n\rightarrow\infty}I(f_{n})=I(f)$ for any nonincreasing
sequence $(f_{n})_{n}$ in $C(X)$ that converges pointwise to a function $f$
also in $C(X);$

$(a_{2})$ $\lim_{n\rightarrow\infty}I(f_{n})\leq I(f)$ for any nonincreasing
sequence $(f_{n})_{n}$ in $C(X)$ and any $f$ in $C(X)$ such that for each
$x\in X$ there is an index $n_{x}\in\mathbb{N}$ such that $f_{n}(x)\leq f(x)$
whenever $n\geq n_{x}$.

$(b)$ For any vector capacity $\boldsymbol{\mu}:\mathcal{B}(X)\rightarrow
E_{+}$, the following two properties are equivalent :

$(b_{1})$ $\lim_{n\rightarrow\infty}\boldsymbol{\mu}(A_{n})=\boldsymbol{\mu
}(A)$, for any nonincreasing sequence $(A_{n})_{n}$ of sets in $\mathcal{B}%
(X)$ such that $A=\cap_{n=1}^{\infty}A_{n};$

$(b_{2})$ $\lim_{n\rightarrow\infty}\boldsymbol{\mu}(A_{n})\leq\boldsymbol{\mu
}(A)$, for any nonincreasing sequence $(A_{n})_{n}$ of sets in $\mathcal{B}%
(X)$ and any $A\in\mathcal{B}(X)$ such that $\cap_{n=1}^{\infty}A_{n}\subset
A$.

All the limits above are considered in the norm topology of $E$.
\end{lemma}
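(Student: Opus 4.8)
The plan is to prove each of the two equivalences by the same mechanism. The ``limit'' forms $(a_1)$ and $(b_1)$ are the exact (two-sided) statements of upper continuity, while $(a_2)$ and $(b_2)$ are one-sided and \emph{decoupled}: the comparison function (resp. set) need not be the pointwise (resp. set-theoretic) limit. In each case I would obtain the implication $(\cdot_1)\Rightarrow(\cdot_2)$ by manufacturing, out of the decoupled data, a genuinely convergent sequence to which the limit form applies, and then releasing the inequality by monotonicity; the reverse implication $(\cdot_2)\Rightarrow(\cdot_1)$ I would prove by splitting the desired equality into two inequalities, one immediate from monotonicity and the other being exactly what $(\cdot_2)$ delivers (after an $\varepsilon$-shift in the functional case). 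Throughout, all limits are norm limits in $E$, and passing an order inequality to the limit uses that $E_{+}$ is closed.

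For part $(a)$, to show $(a_1)\Rightarrow(a_2)$ I would take $(f_n)_n$ nonincreasing in $C(X)$ and $f\in C(X)$ with $f_n(x)\le f(x)$ for $n\ge n_x$, and set $g_n=\sup\{f_n,f\}$. Then $g_n\in C(X)$, the sequence $(g_n)_n$ is nonincreasing (because $(f_n)_n$ is), and for each $x$ one has $g_n(x)=f(x)$ as soon as $n\ge n_x$, so $g_n\to f$ pointwise with $f\in C(X)$; hence $(a_1)$ gives $I(g_n)\to I(f)$. Since $f_n\le g_n$, monotonicity yields $I(f_n)\le I(g_n)$, and letting $n\to\infty$ gives $\lim_n I(f_n)\le I(f)$, which is $(a_2)$. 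For $(a_2)\Rightarrow(a_1)$ I would let $(f_n)_n$ decrease pointwise to $f\in C(X)$; then $f_n\ge f$, so monotonicity gives $I(f_n)\ge I(f)$. For the opposite inequality I would fix $\varepsilon>0$: for each $x$ we have $f_n(x)\downarrow f(x)<f(x)+\varepsilon$, so eventually $f_n(x)\le f(x)+\varepsilon$, and $(a_2)$ applied to $(f_n)_n$ and $f+\varepsilon\in C(X)$ gives $\lim_n I(f_n)\le I(f+\varepsilon)=I(f)+\varepsilon I(1)$ by translation invariance. Letting $\varepsilon\downarrow0$ and combining the two bounds yields $(a_1)$.

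Part $(b)$ is the set-theoretic mirror image. To prove $(b_1)\Rightarrow(b_2)$ I would take $(A_n)_n$ nonincreasing in $\mathcal{B}(X)$ and $A\in\mathcal{B}(X)$ with $\cap_n A_n\subset A$, and set $C_n=A_n\cup A$; then $(C_n)_n$ is nonincreasing and $\cap_n C_n=(\cap_n A_n)\cup A=A$, so $(b_1)$ gives $\boldsymbol{\mu}(C_n)\to\boldsymbol{\mu}(A)$, while $A_n\subset C_n$ and monotonicity give $\boldsymbol{\mu}(A_n)\le\boldsymbol{\mu}(C_n)$; passing to the limit yields $(b_2)$. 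For $(b_2)\Rightarrow(b_1)$ I would let $(A_n)_n$ decrease to $A=\cap_n A_n$; since $A\subset A_n$, monotonicity gives $\boldsymbol{\mu}(A)\le\boldsymbol{\mu}(A_n)$, and $(b_2)$ applied to this same $(A_n)_n$ together with $A$ (which trivially satisfies $\cap_n A_n\subset A$) gives $\lim_n\boldsymbol{\mu}(A_n)\le\boldsymbol{\mu}(A)$; the two bounds give $(b_1)$.

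The one genuinely delicate point is that in a general ordered Banach space a nonincreasing sequence need not converge in norm, so the limits appearing in $(a_2)$ and $(b_2)$ must be known to exist before the inequalities can be read off. In part $(a)$ this is automatic: by Lemma~\ref{lem-aux-1*} the operator $I$ is Lipschitz, and by Dini's lemma $f_n\to f$ uniformly whenever $f_n\downarrow f$ pointwise with $f\in C(X)$, so $I(f_n)\to I(f)$ in norm (and the auxiliary sequences $I(g_n)$ converge for the same reason). In part $(b)$ there is no Dini/Lipschitz mechanism, so the existence of $\lim_n\boldsymbol{\mu}(A_n)$ must be read as part of $(b_1)$, or secured by order continuity of the norm as in the setting of Theorem~\ref{thmEWgen}; once existence is granted, the closedness of $E_{+}$ is what converts the pointwise order inequalities into inequalities between limits. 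The main conceptual step, and the only nonroutine one, is the choice of the comparison sequences $g_n=\sup\{f_n,f\}$ and $C_n=A_n\cup A$, which turn the decoupled one-sided data into honestly convergent sequences on which the exact limit forms can be invoked.
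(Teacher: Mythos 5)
Your proof is correct and takes essentially the same route as the paper's: $(a_1)\Rightarrow(a_2)$ via the auxiliary nonincreasing sequence $\max\{f_n,f\}$ plus monotonicity, and $(a_2)\Rightarrow(a_1)$ via the $\varepsilon$-shift $f+\varepsilon\cdot 1$ together with translation invariance and the trivial reverse inequality from monotonicity, with part $(b)$ treated as the set-theoretic mirror image (the paper merely states that $(b_1)\Leftrightarrow(b_2)$ ``can be proved in a similar way''). Your explicit write-up of part $(b)$ and your remark about securing the existence of the norm limits (via Lemma \ref{lem-aux-1*} and Dini in case $(a)$, and by reading existence into $(b_1)$/$(b_2)$ in case $(b)$) address points the paper leaves implicit, but they do not alter the approach.
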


\begin{proof}
$(a_{1})\Rightarrow(a_{2})$. Let an arbitrary sequence $(f_{n})_{n}$ in $C(X)$
and $f\in C(X)$, be such that $(f_{n})_{n}$ is nonincreasing and, for all
$x\in X$, there is an $n_{x}$ with $f_{n}(x)\leq f(x)$ for all $n\geq n_{x}$.
Since the sequence $(\max\{f_{n},f\})$ is also a nonincreasing sequence in
$C(X)$ and $\lim_{n\rightarrow\infty}\max\{f_{n}(x),f(x)\}=f(x)$ for all $x\in
X$, $(a_{i})$ implies $\lim_{n\rightarrow\infty}I(\max\{f_{n},f\})=I(f)$. By
the monotonicity of $I$ it follows $\lim_{n\rightarrow\infty}I(f_{n})\leq
\lim_{n\rightarrow\infty}I(\max\{f_{n},f\})=I(f)$.

$(a_{2})\Rightarrow(a_{1})$. Let $f_{n},f\in C(X)$, $n\in\mathbb{N}$, be such
that $(f_{n})_{n}$ is nonincreasing and $\lim_{n\rightarrow\infty}%
f_{n}(x)=f(x)$, for all $x\in X$. Fix $\varepsilon>0$ arbitrary. Since for all
$x\in X$, there exists $n_{x}\in\mathbb{N}$ such that $f_{n}(x)\leq
f(x)+\varepsilon$, for all $n\geq n_{x}$, $(a_{ii})$ implies (from
monotonicity, positive homogeneity and translation invariance) $\lim
_{n\rightarrow\infty}I(f_{n})\leq I(f+\varepsilon\cdot1)=I(f)+\varepsilon
I(1)$. Passing with $\varepsilon\rightarrow0$ it follows $\lim_{n\rightarrow
\infty}I(f_{n})\leq I(f)$. But since $(f_{n})_{n}$ is nonincreasing and $I$ is
monotone, we also have $\lim_{n\rightarrow\infty}I(f_{n})\geq I(f)$, which
combined with the previous inequality implies $\lim_{n\rightarrow\infty
}I(f_{n})=I(f)$.

The equivalence $(b_{1})\Leftrightarrow(b_{2})$ can be proved in a similar way.
\end{proof}

Recall that in the case of compact Hausdorff space $X$ the lattice
$\Sigma_{up}^{+}(X)$ represents the lattice of upper contour sets associated
to the continuous functions defined on $X.$ This lattice coincides with the
lattice of all closed subsets of $X$ when $X$ is compact and metrizable;
indeed, if $d$ is the metric of $X,$ then every closed subset $A\subset X$
admits the representation $A=\left\{  x:-d(x,A)\geq0\right\}  .$

\begin{proof}
[Proof of Theorem $3$]Notice first that according to Lemma \ref{lem-aux-1} and
Lemma \ref{lem-aux-3} the operator $I$ is also positively homogeneous and
upper continuous.

Every set $K\in\Sigma_{up}^{+}(X)$ admits a representation of the form%
\[
K=\left\{  x:f(x)\geq\alpha\right\}  ,
\]
for suitable $f\in C(X)$ and $\alpha\in\mathbb{R}.$ As a consequence its
characteristic function $\chi_{K}$ is the pointwise limit of a nonincreasing
sequence $(f_{n}^{K})_{n}$ of continuous and nonnegative functions. For
example, one may choose
\begin{equation}
f_{n}^{K}(x)=1-\inf\left\{  1,n(\alpha-f)^{+}\right\}  =\left\{
\begin{array}
[c]{cl}%
0 & \text{if }f(x)\leq\alpha-1/n\\
\in(0,1) & \text{if }\alpha-1/n<\varphi(x)<\alpha\\
1 & \text{if }f(x)\geq\alpha\text{ (i.e., }x\in K).\text{ }%
\end{array}
\right.  \label{Ury}%
\end{equation}

See \cite{Zhou}, p. 1814. Since $I$ is monotone, the sequence $(I(f_{n}%
^{K}))_{n}$ is also nonincreasing and bounded from below by $0$, which implies
(due to the order continuity of the norm of $E)$, that it is also convergent
in the norm topology of $E.$

This allows us to define $\boldsymbol{\mu}$ on the sets $K\in\Sigma_{up}%
^{+}(X)$ by the formula
\begin{equation}
\boldsymbol{\mu}(K)=\lim_{n\rightarrow\infty}I(f_{n}^{K}). \label{defmu}%
\end{equation}

The definition of $\boldsymbol{\mu}(K)$ is independent of the particular
sequence $(f_{n}^{K})_{n}$ with the aforementioned properties. Indeed, if
$(g_{n}^{K})_{n}$ is another such sequence, fix a positive integer $m$, and
infer from Lemma \ref{lem-aux-2} $(a)$ that $\lim_{n\rightarrow\infty}%
I(g_{n}^{A})\leq I(f_{m}^{A})$. Taking the limit as $m\rightarrow\infty$ on
the right-hand side we get
\[
\lim_{n\rightarrow\infty}I(g_{n}^{A})\leq\lim_{m\rightarrow\infty}I(f_{m}%
^{A}).
\]
Then, interchanging $(f_{n}^{K})_{n}$ and $(g_{n}^{K})_{n}$ we conclude that
actually equality holds.

Clearly, the set function $\mathbf{\mu}:\Sigma_{up}^{+}(X)\rightarrow E_{+}$
is a vector capacity and it takes values in the order interval $[0,I(1)].$

We next show that $\boldsymbol{\mu}$ is upper continuous on $\Sigma_{up}%
^{+}(X)$, that is,%
\[
\boldsymbol{\mu}(K)=\lim_{n\rightarrow\infty}\boldsymbol{\mu}(K_{n})
\]
whenever $(K_{n})_{n}$ is a nonincreasing sequence of sets in $\Sigma_{up}%
^{+}(X)$ such that $K=\cap_{n=1}^{\infty}K_{n}$. Indeed, using formula
(\ref{Ury}) one can choose a nonincreasing sequence of continuous function
$g_{n}$ such that $g_{n}\geq\chi_{K_{n}},$ $g_{n}=0$ outside the neighborhood
of radius $1/n$ of $K_{n}$ and $I(g_{n})-\boldsymbol{\mu}(K_{n})\rightarrow0$.
See formula (\ref{Ury}) and using analogous reasonings with those concerning
relations (7)-(9) in \cite{Zhou}, pp. 1814-1815. Then $\boldsymbol{\mu
}(K)=\lim_{n\rightarrow\infty}I(g_{n}),$ which implies the equality
$\boldsymbol{\mu}(K)=\lim_{n\rightarrow\infty}\boldsymbol{\mu}(K_{n}).$

The next goal is the representation formula (\ref{reprformula}). For this, let
$x^{\ast}\in E_{+}^{\ast}$ be arbitrarily fixed and consider the comonotonic
additive and monotone functional
\[
x^{\ast}\circ I:C(X)\rightarrow\mathbb{R}.
\]
It verifies $I^{\ast}(1)=x^{\ast}(I(1))>0,$ so by Theorem 1 in \cite{Zhou}
there is a unique upper continuous capacity $\nu^{\ast}:\Sigma_{up}%
^{+}(X)\rightarrow\lbrack0,I^{\ast}(1)]$ such that
\[
\left(  x^{\ast}\circ I\right)  (f)=(C)\int_{X}f\mathrm{d}\nu^{\ast}\text{ for
all }f\in C(X).
\]
The capacity $\nu^{\ast}$ is obtained via an approximation process similar to
(\ref{defmu}). Therefore%
\[
\nu^{\ast}(K)=\lim_{n\rightarrow\infty}x^{\ast}(I(f_{n}^{K}))=x^{\ast}%
(\lim_{n\rightarrow\infty}[I(f_{n}^{K}))=x^{\ast}(\boldsymbol{\mu}(K))
\]
for all $K\in\Sigma_{up}^{+}(X),$ which implies
\[
x^{\ast}(I(f))=(C)\int_{X}f\mathrm{d}(x^{\ast}\circ\boldsymbol{\mu}).
\]
Since $x^{\ast}\in E_{+}^{\ast}$ was arbitrarily fixed, an appeal to Lemma
\ref{lem1} $(c)$ easily yields the equality
\[
I(f)=(\operatorname*{CB})\int_{X}f\mathrm{d}\boldsymbol{\mu}.
\]

For the second part of Theorem \ref{thmEWgen},since $\boldsymbol{\mu}$ takes
values in an order bounded interval, one can extend it to all Borel subsets of
$X$ via the formula%
\[
\boldsymbol{\mu}(A)=\sup\left\{  \boldsymbol{\mu}(K):K\text{ closed, }K\subset
A\right\}  ,\text{\quad}A\in\mathcal{B}(X).
\]

The fact that the resulting set function $\boldsymbol{\mu}$ is a vector
capacity is immediate.

This set function $\boldsymbol{\mu}$ also verifies the regularity condition
$(R2)$. Indeed, given a closed set $K$, we can consider the sequence of open
sets%
\[
O_{n}=\left\{  x\in X:d(x,K)<1/n\right\}  .
\]
Clearly, $\boldsymbol{\mu}(K)\leq\boldsymbol{\mu}(O_{n})\leq\boldsymbol{\mu
}(K_{n})$ where
\[
K_{n}=\left\{  x\in X:d(x,K)\leq1/n\right\}  .
\]

Since $\boldsymbol{\mu}$ is upper continuous on closed sets, it follows that
$\lim_{n\rightarrow\infty}\boldsymbol{\mu}(K_{n})=\boldsymbol{\mu}(K)$, whence
$\lim_{n\rightarrow\infty}\boldsymbol{\mu}(O_{n})=\boldsymbol{\mu}(K).$
Therefore $\boldsymbol{\mu}$ verifies the regularity condition $(R2)$. The
uniqueness of the extension of $\boldsymbol{\mu}$ to $\mathcal{B}(X)$ is
motivated by the condition $(R1).$
\end{proof}

\begin{remark}
\label{rem2}If the operator $I$ is submodular, that is,%
\[
I(\sup\left\{  {f,g}\right\}  )+I(\inf\left\{  {f,g}\right\}  )\leq
I(f)+I(g)\text{\quad for all }f,g\in C(X),
\]
then the vector capacity $\boldsymbol{\mu}:\Sigma_{up}^{+}(X)\rightarrow
E_{+}$ stated by Theorem \emph{\ref{thmEWgen}} is submodular. This is a
consequence of Theorem $13$ $(c)$ in \emph{\cite{CMMM2012}}. For the
convenience of the reader we will recall here the argument. Let $A,B\in
\Sigma_{up}^{+}(X)$ and consider the sequences $(f_{n}^{A}(x))_{n}$ and
$(f_{n}^{B}(x))_{n}$ of continuous functions associated respectively to $A$
and $B$ by the formula \emph{(\ref{defmu})}. Then
\[
\boldsymbol{\mu}(A)=\lim_{n\rightarrow\infty}I(f_{n}^{A}),\ \text{
}\boldsymbol{\mu}(B)=\lim_{n\rightarrow\infty}I(f_{n}^{B}),\ \text{
}\boldsymbol{\mu}(A\cup B)=\lim_{n\rightarrow\infty}I(\sup\left\{  f_{n}%
^{A},f_{n}^{B}\right\}  \ \text{ }%
\]
and $\boldsymbol{\mu}(A\cap B)=\lim_{n\rightarrow\infty}I(\inf\left\{
f_{n}^{A},f_{n}^{B}\right\}  $. Since $I$ is submodular, it follows that
\[
\boldsymbol{\mu}(A\cup B)+\boldsymbol{\mu}(A\cap B)\leq\boldsymbol{\mu
}(A)+\boldsymbol{\mu}(B),
\]
and the proof is done.
\end{remark}

\section{The case of operators with bounded variation}

The representation Theorem \ref{thmEWgen} can be extended outside the
framework of monotone operators by considering the class of operators with
bounded variation.

As above, $X$ is a compact Hausdorff space and $E$ is a Banach lattice with
order continuous norm.

\begin{definition}
\label{defOpBV}An operator $I:C(X)\rightarrow E$ has \emph{bounded variation}
\emph{over an order interval} $[f,g]$ if%
\begin{equation}
\vee_{f}^{g}I=\sup%
{\displaystyle\sum\nolimits_{k=0}^{n}}
\left\vert I(f_{k})-I(f_{k-1})\right\vert \text{\quad exists in }E,
\label{sumvar}%
\end{equation}
the supremum being taken over all finite chains $f=f_{0}\leq f_{1}\leq
\cdots\leq f_{n}=g$ of functions in the Banach lattice $C(X).$ The operator
$I$ is said to have \emph{bounded variation} if it has bounded variations on
all\emph{ }order intervals $[f,g]$ in $C(X).$
\end{definition}

Clearly, if $I$ is monotone, then $\vee_{f}^{g}I=I(g)-I(f)$ for all $f\leq g$
in $C(X)$ and thus $I$ has bounded variation.

More generally, every operator $I:C(X)\rightarrow E$ which can be represented
as the difference $I=I_{1}-I_{2}$ of two monotone operators $I_{1}%
,I_{2}:C(X)\rightarrow E$ has bounded variation. This follows from the order
completeness of $E$ and the modulus inequality, which provides an upper bound
for the sums appearing in formula (\ref{sumvar}):
\[%
{\displaystyle\sum\nolimits_{k=0}^{n}}
\left\vert I(f_{k})-I(f_{k-1})\right\vert \leq I_{1}(g)-I_{1}(f)+I_{2}%
(g)-I_{2}(f).
\]
Remarkably, the converse also holds. The basic ingredient is the following result.

\begin{lemma}
\label{lemtech}Suppose that $I:C(X)\rightarrow E$ is a comonotonic additive
operator with bounded variation. Then there exist two positively homogeneous,
translation invariant and monotone operators $I_{1},I_{2}:C(X)\rightarrow E$
such that $I=I_{1}-I_{2}.$

Moreover, if $I$ is upper continuous, then both operators $I_{1}$ and $I_{2}$
can be chosen to be upper continuous.
\end{lemma}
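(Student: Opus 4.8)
The plan is to construct the positive part $I_1$ directly as a variation-type supremum, then set $I_2 = I_1 - I$ and verify it has the desired properties. First I would define, for each $f \in C(X)$, the candidate
\[
I_1(f) = \vee_0^{f^+} I - \vee_0^{f^-} I + I(0)\,?
\]
but the cleaner route, mirroring the scalar Jordan decomposition, is to exploit translation invariance. Because $I$ is comonotonic additive with $I(1)$ available, Lemma \ref{lem-aux-1} already gives positive homogeneity once $I$ is monotone; here $I$ need not be monotone, but comonotonic additivity still yields $I(0)=0$ and translation invariance of $I$ in the form $I(f+c) = I(f) + cI(1)$ for $c \ge 0$ (since $f$ and the constant $c$ are comonotone). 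So I would first record these structural facts. The core construction is then to set, for $f \ge 0$,
\[
I_1(f) = \vee_0^{f} I,
\]
the variation of $I$ over the order interval $[0,f]$, which exists in $E$ by the bounded-variation hypothesis and by the order completeness of the Banach lattice $E$ (closed order intervals are weakly compact, Lemma \ref{lem2}, so bounded monotone nets of partial sums converge).

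The key steps, in order, are: (1) show $I_1$ so defined is \emph{additive on comonotone pairs} and \emph{monotone} on $C(X)_+$ — monotonicity is immediate since refining the chain from $[0,f]$ to $[0,g]$ with $f \le g$ only adds nonnegative modulus terms; (2) establish positive homogeneity of $I_1$ on $C(X)_+$, which follows from $\vee_0^{\alpha f} I = \alpha \vee_0^f I$ by rescaling the chains $0 = f_0 \le \cdots \le f_n = f$ to $0 = \alpha f_0 \le \cdots \le \alpha f_n = \alpha f$ together with the positive homogeneity of $I$ inherited via Lemma \ref{lem-aux-1}; (3) extend $I_1$ to all of $C(X)$ by translation, setting $I_1(f) = I_1(f + \lambda\cdot 1) - \lambda I_1(1)$ for any $\lambda$ with $f + \lambda \ge 0$, and check this is well defined and translation invariant; (4) define $I_2 = I_1 - I$ and verify $I_2$ is monotone, i.e. $I(g) - I(f) \le I_1(g) - I_1(f) = \vee_f^g I$ for $f \le g$, which is exactly the statement that a single-term chain is dominated by the supremum defining the variation. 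Finally, both $I_1$ and $I_2$ are positively homogeneous and translation invariant by construction, and the comonotonic additivity of $I$ passes to $I_1$ (hence to $I_2$) because comonotone functions admit common monotone chains.

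The main obstacle will be step (1)–(2): proving that the variation functional $f \mapsto \vee_0^f I$ is itself comonotonic additive and positively homogeneous, rather than merely subadditive. The delicate point is that for comonotone $f,g \ge 0$ one must match chains in $[0, f+g]$ with pairs of chains in $[0,f]$ and $[0,g]$ so that the modulus terms split additively; this requires the comonotonicity of $f$ and $g$ to guarantee that the increments can be simultaneously ordered, and it is here that one genuinely uses the hypothesis that $I$ (not just its variation) is comonotonic additive. For the last clause, if $I$ is upper continuous I would verify that $I_1(f) = \vee_0^f I$ inherits upper continuity via Lemma \ref{lem-aux-2}: for a nonincreasing sequence $f_n \downarrow f$ in $C(X)_+$, approximate the variation over $[0,f_n]$ by finite chains and pass to the limit using the upper continuity of $I$ on each chain element together with the generalized Dini lemma (Lemma \ref{lem1}$(e)$), so that both $I_1$ and $I_2 = I_1 - I$ are upper continuous.
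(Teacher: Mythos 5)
Your construction coincides with the paper's: define $I_{1}(f)=\vee_{0}^{f}I$ on $C(X)_{+}$, extend by $I_{1}(f)=\vee_{0}^{f+\lambda}I-\lambda\vee_{0}^{1}I$, set $I_{2}=I_{1}-I$, get monotonicity of $I_{2}$ because the variation dominates single increments, and get positive homogeneity by rescaling chains. But two of the steps you wave through are where the real work lies, and one of them fails as stated. First, you invoke Lemma \ref{lem-aux-1} to obtain positive homogeneity of $I$ itself (needed for your step (2), the identity $\vee_{0}^{\alpha f}I=\alpha\vee_{0}^{f}I$), but that lemma assumes $I$ is monotone, which is precisely the hypothesis missing here. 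Comonotonic additivity alone gives only $I(qf)=qI(f)$ for rational $q\geq0$; passing to irrational scalars requires a regularity argument drawn from the bounded-variation hypothesis (for instance: for fixed $f\in C(X)_{+}$ and $x^{\ast}\in E_{+}^{\ast}$, the map $t\mapsto x^{\ast}(I(tf))$ is additive on $\mathbb{R}_{+}$ and of bounded variation on compact intervals, hence linear, and Lemma \ref{lem1}$(c)$ then yields $I(tf)=tI(f)$). This is the paper's fact $(a)$, and without it your rescaling of chains has no justification.

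Second, and more seriously, your step (3) --- ``check this is well defined and translation invariant'' --- is the crux of the lemma, and it is exactly the place where comonotonic additivity of $I$ must be used. Independence of the choice of $\lambda$ amounts to $\vee_{0}^{f+\beta}I-\vee_{0}^{f+\alpha}I=(\beta-\alpha)\vee_{0}^{1}I$, which reduces to the additivity of the variation across zero: $\vee_{-\alpha}^{f}I=\vee_{-\alpha}^{0}I+\vee_{0}^{f}I$ for $f\in C(X)_{+}$. The inequality $\geq$ is trivial, but the reverse needs the splitting of an arbitrary chain $-\alpha=f_{0}\leq\cdots\leq f_{n}=f$ into the two chains $(-f_{k}^{-})_{k}$ and $(f_{k}^{+})_{k}$, using that $-f_{k}^{-}$ and $f_{k}^{+}$ are comonotone, so that $I(f_{k})=I(-f_{k}^{-})+I(f_{k}^{+})$. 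You clearly possess this comonotone-splitting idea, but you aim it at proving comonotonic additivity of $I_{1}$ --- a property the lemma never asks for, since $I_{1},I_{2}$ need only be positively homogeneous, translation invariant and monotone --- while leaving the well-definedness, which genuinely needs it, unproved; your self-identified ``main obstacle'' is therefore a detour, and the actual obstacle goes unaddressed. Two smaller points: in step (4) the claimed equality $I_{1}(g)-I_{1}(f)=\vee_{f}^{g}I$ should be the inequality $I_{1}(g)-I_{1}(f)\geq\vee_{f}^{g}I$ (superadditivity of the variation over concatenated intervals), which is all you need; and for the final clause the paper's route is shorter than yours: once $I_{1}$ is monotone and translation invariant it is Lipschitz by Lemma \ref{lem-aux-1*}, and the classical Dini lemma converts pointwise decreasing convergence in $C(X)$ into uniform convergence, so $I_{1}$ and $I_{2}$ are automatically upper continuous.
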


\begin{proof}
The proof is done in the footsteps of Lemma 14 in \cite{CMMM2012} by noticing
first the following four facts:

$(a)$ According to our hypotheses,
\[
I(\alpha f+\beta)=\alpha I(f)+I(\beta)
\]
for all $f\in C(X),$ $\alpha\in\mathbb{R}_{+}$ and $\beta\in\mathbb{R}.$

$(b)$ $\vee_{0}^{f+\alpha}I=\vee_{-\alpha}^{f}I$ for all $f\in C(X)$ and
$\alpha\in\mathbb{R}$ with $f+\alpha\geq0.$

This follows from the definition of the variation.

$(c)$ $\vee_{0}^{\alpha f}I=\alpha\vee_{0}^{f}I$ for all $f\in C(X)_{+}$ and
$\alpha\in\mathbb{R}_{+}.$

Indeed for every $\varepsilon\in E,$ $\varepsilon>0,$ there exists a chain
$0=f_{0}\leq f_{1}\leq\cdots\leq f_{n}=f$ such that
\[%
{\displaystyle\sum\nolimits_{k=0}^{n}}
\left\vert I(f_{k})-I(f_{k-1})\right\vert \geq\vee_{0}^{f}I-\varepsilon.
\]
According to fact $(a)$ the chain $0=\alpha f_{0}\leq\alpha f_{1}\leq
\cdots\leq\alpha f_{n}=\alpha f$ verifies%
\[
\vee_{0}^{\alpha f}I\geq%
{\displaystyle\sum\nolimits_{k=0}^{n}}
\left\vert I(\alpha f_{k})-I(\alpha f_{k-1})\right\vert \geq\alpha\vee_{0}%
^{f}I-\alpha\varepsilon.
\]
As $\varepsilon>0$ was arbitrarily fixed it follows that $\vee_{0}^{\alpha
f}I\geq\alpha\vee_{0}^{f}I.$ By replacing $\alpha$ to $1/\alpha$ and then $f$
by $\alpha f$, one obtains the reverse inequality, $\vee_{0}^{\alpha f}%
I\leq\alpha\vee_{0}^{f}I.$

$(d)$ $\vee_{-\alpha}^{f}I=\vee_{-\alpha}^{0}I+\vee_{0}^{f}I=\vee_{0}^{\alpha
}I+\vee_{0}^{f}I$ for all $f\in C(X)_{+}$ and $\alpha\in\mathbb{R}_{+}.$

The fact that $\vee_{-\alpha}^{f}I\geq\vee_{-\alpha}^{0}I+\vee_{0}^{f}%
I=\vee_{0}^{\alpha}I+\vee_{0}^{f}I$ is a direct consequence of the definition
of variation. For the other inequality, fix arbitrarily $\varepsilon>0$ in $E$
and choose a chain $-\alpha=f_{0}\leq f_{1}\leq\cdots\leq f_{n}=f$ such that
\[%
{\displaystyle\sum\nolimits_{k=0}^{n}}
\left\vert I(f_{k})-I(f_{k-1})\right\vert \geq\vee_{-\alpha}^{f}%
I-\varepsilon.
\]
Then $-\alpha=-f_{0}^{-}\leq-f_{1}^{-}\leq\cdots\leq-f_{n}^{-}=0$ and
$0=f_{0}^{+}\leq f_{1}^{+}\leq\cdots\leq f_{n}^{+}=f.$ Since all pairs
$-f_{k}^{-},f_{k}^{+}$ are comonotonic,

we have%
\begin{align*}
\vee_{-\alpha}^{0}I+\vee_{0}^{f}I  &  \geq%
{\displaystyle\sum\nolimits_{k=0}^{n}}
\left\vert I(-f_{k}^{-})-I(-f_{k-1}^{-})\right\vert +%
{\displaystyle\sum\nolimits_{k=0}^{n}}
\left\vert I(f_{k}^{+})-I(f_{k-1}^{+})\right\vert \\
&  \geq%
{\displaystyle\sum\nolimits_{k=0}^{n}}
\left\vert I(-f_{k}^{-})-I(-f_{k-1}^{-})+I(f_{k}^{+})-I(f_{k-1}^{+}%
)\right\vert \\
&  =%
{\displaystyle\sum\nolimits_{k=0}^{n}}
\left\vert I(f_{k})-I(f_{k-1})\right\vert \geq\vee_{-\alpha}^{f}I-\varepsilon
\end{align*}
and it remains to take the supremum over $\varepsilon>0.$

Now we can proceed to the choice of the operators $I_{1}$ and $I_{2}.$ By
definition,
\[
I_{1}(f)=\vee_{0}^{f}I\text{\quad if }f\in C(X)_{+},
\]
while if $f\in C(X)$ is an arbitrary function, one choose $\alpha\in
\mathbb{R}_{+}$ such that $f+\alpha\geq0$ and put
\[
I_{1}(f)=\vee_{0}^{f+\alpha}I-\alpha\vee_{0}^{1}I.
\]
The fact that $I_{1}$ is well-defined (that is, independent of $\alpha)$ can
be proved as follows. Suppose that $\alpha,\beta>0$ are two numbers such that
$f+\alpha\geq0$ and $f+\beta\geq0.$ Without loss of generality we may assume
that $\alpha<\beta.$ Indeed, according to the facts $(b)-(d)$, we have%
\begin{align*}
\vee_{0}^{f+\beta}I-\beta\vee_{0}^{1}I  &  =\vee_{0}^{f+\alpha+(\beta-\alpha
)}I-\beta\vee_{0}^{1}I\overset{\text{fact }(b)}{=}\vee_{-(\beta-\alpha
)}^{f+\alpha}I-\beta\vee_{0}^{1}I\\
&  \overset{\text{fact }(d)}{=}\vee_{-(\beta-\alpha)}^{0}I+\vee_{0}^{f+\alpha
}I-\beta\vee_{0}^{1}I\\
&  \overset{\text{fact }(b)}{=}\vee_{0}^{\beta-\alpha}I+\vee_{0}^{f+\alpha
}I-\beta\vee_{0}^{1}I\\
&  \overset{\text{fact }(c)}{=}(\beta-\alpha)\vee_{0}^{1}I+\vee_{0}^{f+\alpha
}I-\beta\vee_{0}^{1}I\\
&  =\vee_{0}^{f+\alpha}I-\alpha\vee_{0}^{1}I.
\end{align*}

By definition,
\[
I_{2}=I-I_{1}.
\]

Let $f,g$ be two functions in $C(X)$ such that $f\leq g$ and let $\alpha>0$
such that $f+\alpha\geq0.$ Since $I$ is monotonic and has bounded variation,%
\[
0\leq I(g)-I(f)=I(g+\alpha)-I(f+\alpha)\leq\vee_{f+\alpha}^{g+\alpha}I\leq
\vee_{0}^{g+\alpha}I-\vee_{0}^{f+\alpha}I=I_{1}(g)-I_{1}(f),
\]
whence we infer that both operators $I_{1}$ and $I_{2}$ are monotonic.

Due to the fact $(c),$ the operator $I_{1}$ is positively homogeneous.
Therefore the same is true for $I_{2}.$

For the property of translation invariance, let $f\in C(X)$, $\beta
\in\mathbb{R}$ and choose $\alpha>0$ such that $f+\beta+\alpha\geq0$ and
$\beta+\alpha\geq0.$ Then%
\begin{align*}
I_{1}(f+\beta)  &  =\vee_{0}^{f+\beta+\alpha}I-\alpha\vee_{0}^{1}I\\
&  =I_{1}(f+\beta+\alpha)-\alpha\vee_{0}^{1}I
\end{align*}
while from facts $(b)\&(d)$ we infer that $I_{1}(f)=I_{1}(f+\beta
+\alpha)-\left(  \beta+\alpha\right)  \vee_{0}^{1}I.$ Therefore
\[
I_{1}(f+\beta)=I_{1}(f)+I_{1}(\beta)
\]
which proves that indeed $I_{1}$ is translation invariant. The same holds for
$I_{2}=I-I_{1}.$

As concerns the second part of Lemma \ref{lemtech}, it suffices to prove that
$I_{1}$ is upper continuous when $I$ has this property.

Let $(f_{n})_{n}$ be a decreasing sequence in $C(X)$ which converges pointwise
to a function $f$ also in $C(X).$ By Dini's lemma, $\left\Vert f_{n}%
-f\right\Vert \rightarrow0$. Since $I_{1}$ is a Lipschitz operator (see Lemma
\ref{lem-aux-1*}) we conclude that $\left\Vert I_{1}(f_{n})-I(f)\right\Vert
\rightarrow0.$

This ends the proof of Lemma \ref{lemtech}.
\end{proof}

Now it is clear that the representation Theorem \ref{thmEWgen} can be extended
to the framework of comonotonic additive operators $I:C(X)\rightarrow E$ with
bounded variation by considering Choquet-Bochner integrals associated to
differences of vector capacities (that is, to set functions with bounded
variation in the sense of Aumann and Shapley \cite{AS1974}).

Let $\Sigma$ be a lattice of sets of $X$ and $\boldsymbol{\mu}:\mathcal{\Sigma
}\rightarrow E$ a set function taking values in a Banach lattice $E$ with
order continuous norm.

The \emph{variation} of the set function $\boldsymbol{\mu}$ is the set
function $\left\vert \boldsymbol{\mu}\right\vert $ defined by the formula%
\[
\left\vert \mathbf{\mu}\right\vert (A)=\sup%
{\displaystyle\sum\nolimits_{k=0}^{n}}
\left\vert \boldsymbol{\mu}(A_{k})-\boldsymbol{\mu}(A_{k-1})\right\vert
\text{\quad for }A\in\Sigma,
\]
where the supremum is taken over all finite chains $A_{0}=\emptyset\subset
A_{1}\subset\cdots\subset A_{n}\subset A$ of sets in the lattice $\Sigma.$

The space of set functions with bounded variation,%
\[
\operatorname*{bv}(\Sigma,E)=\left\{  \boldsymbol{\mu}:\Sigma\rightarrow
E:\boldsymbol{\mu}(\emptyset)=0\text{ and }\left\vert \boldsymbol{\mu
}\right\vert (X)\text{ exists in }E\right\}  ,
\]
is a normed vector space when endowed with the norm%
\[
\left\Vert \boldsymbol{\mu}\right\Vert _{\operatorname*{bv}}=\left\Vert
\left\vert \boldsymbol{\mu}\right\vert (X)\right\Vert .
\]

Associated to every set function $\boldsymbol{\mu}\in\operatorname*{bv}%
(\Sigma,E)$ are two positive vector-valued set functions, the\emph{ inner
upper variation} of $\mu,$ defined by%
\[
\boldsymbol{\mu}^{+}(A)=\sup%
{\displaystyle\sum\nolimits_{k=0}^{n}}
\left(  \boldsymbol{\mu}(A_{k})-\boldsymbol{\mu}(A_{k-1})\right)
^{+}\text{\quad for }A\in\Sigma
\]
and the \emph{inner lower variation} of $\boldsymbol{\mu},$ defined by%
\[
\boldsymbol{\mu}^{-}(A)=\sup%
{\displaystyle\sum\nolimits_{k=0}^{n}}
\left(  \boldsymbol{\mu}(A_{k})-\boldsymbol{\mu}(A_{k-1})\right)
^{-}\text{\quad for }A\in\Sigma;
\]
in both cases the supremum is taken over all finite chains $A_{0}%
=\emptyset\subset A_{1}\subset\cdots\subset A_{n}\subset A$ of sets in
$\Sigma.$ Notice that%
\[
\boldsymbol{\mu}=\boldsymbol{\mu}^{+}-\boldsymbol{\mu}^{-}\text{ and
}\left\vert \boldsymbol{\mu}\right\vert =\boldsymbol{\mu}^{+}+\boldsymbol{\mu
}^{-}.
\]

\begin{lemma}
\label{lemmesbv}We assume that $E$ is a Banach lattice with order continuous
norm. The following conditions are equivalent for a set function
$\boldsymbol{\mu}:\Sigma\rightarrow E:$

$(a)$ $\boldsymbol{\mu}\in\operatorname*{bv}(\Sigma,E);$

$(b)$ $\boldsymbol{\mu}^{+}$\ and $\boldsymbol{\mu}^{-}$ are vector
capacities$;$

$(c)$ there exist two vector capacities $\boldsymbol{\mu}_{1}$ and
$\boldsymbol{\mu}_{2}$ on $\Sigma$ such that $\boldsymbol{\mu}=\boldsymbol{\mu
}_{1}-\boldsymbol{\mu}_{2}.$

Moreover, for any such decomposition we have $\boldsymbol{\mu}^{+}%
\leq\boldsymbol{\mu}_{1}$ and $\boldsymbol{\mu}^{-}\leq\boldsymbol{\mu}_{2}$.
\end{lemma}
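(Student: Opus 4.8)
The plan is to establish the cyclic chain of implications $(a)\Rightarrow(b)\Rightarrow(c)\Rightarrow(a)$ and then to verify the two concluding inequalities separately. Throughout, the two recurring tools are the \emph{order completeness} of $E$ (guaranteed by the order continuity of its norm), which secures the existence of the relevant suprema, and the elementary observation that the families of partial sums entering the definitions of $\left\vert\boldsymbol{\mu}\right\vert$, $\boldsymbol{\mu}^{+}$ and $\boldsymbol{\mu}^{-}$ are \emph{directed upward by refinement}: inserting an extra set into a chain $\emptyset=A_{0}\subset\cdots\subset A_{n}\subset A$ can only increase each of the sums $\sum\left\vert\boldsymbol{\mu}(A_{k})-\boldsymbol{\mu}(A_{k-1})\right\vert$, $\sum(\boldsymbol{\mu}(A_{k})-\boldsymbol{\mu}(A_{k-1}))^{+}$ and $\sum(\boldsymbol{\mu}(A_{k})-\boldsymbol{\mu}(A_{k-1}))^{-}$, by the triangle and lattice inequalities. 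In particular one may always assume the competing chains terminate at $A$.

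For $(a)\Rightarrow(b)$ I would first note that, for any chain inside $A$, the pointwise estimates $x^{+}\leq\left\vert x\right\vert$ and $x^{-}\leq\left\vert x\right\vert$ give $\sum(\boldsymbol{\mu}(A_{k})-\boldsymbol{\mu}(A_{k-1}))^{\pm}\leq\left\vert\boldsymbol{\mu}\right\vert(X)$, since a chain inside $A\subset X$ is also a chain inside $X$. Thus the defining families are bounded above in $E$, and order completeness yields that $\boldsymbol{\mu}^{+}(A)$ and $\boldsymbol{\mu}^{-}(A)$ exist. Positivity is immediate (each summand is $\geq0$); $\boldsymbol{\mu}^{\pm}(\emptyset)=0$ because the only admissible chain is the trivial one; and monotonicity holds because every chain inside $A$ is a chain inside $B$ whenever $A\subset B$, so the supremum can only grow. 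Hence $\boldsymbol{\mu}^{+}$ and $\boldsymbol{\mu}^{-}$ are vector capacities.

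The implication $(b)\Rightarrow(c)$ is then immediate upon taking $\boldsymbol{\mu}_{1}=\boldsymbol{\mu}^{+}$ and $\boldsymbol{\mu}_{2}=\boldsymbol{\mu}^{-}$, invoking the decomposition $\boldsymbol{\mu}=\boldsymbol{\mu}^{+}-\boldsymbol{\mu}^{-}$ recorded above (which itself follows, for chains ending at $A$, from the identity $\sum(\cdot)^{+}=\boldsymbol{\mu}(A)+\sum(\cdot)^{-}$ together with the translation invariance of the supremum, both $\boldsymbol{\mu}^{\pm}(A)$ being available under $(b)$). For $(c)\Rightarrow(a)$, suppose $\boldsymbol{\mu}=\boldsymbol{\mu}_{1}-\boldsymbol{\mu}_{2}$ with $\boldsymbol{\mu}_{1},\boldsymbol{\mu}_{2}$ vector capacities. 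Using the triangle inequality and the monotonicity of $\boldsymbol{\mu}_{1},\boldsymbol{\mu}_{2}$, every chain-sum over $X$ telescopes:
\[
\sum_{k}\left\vert\boldsymbol{\mu}(A_{k})-\boldsymbol{\mu}(A_{k-1})\right\vert\leq\sum_{k}\big[(\boldsymbol{\mu}_{1}(A_{k})-\boldsymbol{\mu}_{1}(A_{k-1}))+(\boldsymbol{\mu}_{2}(A_{k})-\boldsymbol{\mu}_{2}(A_{k-1}))\big]\leq\boldsymbol{\mu}_{1}(X)+\boldsymbol{\mu}_{2}(X).
\]
This uniform upper bound, combined with order completeness, yields the existence of $\left\vert\boldsymbol{\mu}\right\vert(X)$, i.e. $\boldsymbol{\mu}\in\operatorname*{bv}(\Sigma,E)$.

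Finally, for the concluding inequalities, fix a decomposition $\boldsymbol{\mu}=\boldsymbol{\mu}_{1}-\boldsymbol{\mu}_{2}$ as in $(c)$ and a chain $\emptyset=A_{0}\subset\cdots\subset A_{n}=A$. Since $\boldsymbol{\mu}_{2}(A_{k})-\boldsymbol{\mu}_{2}(A_{k-1})\geq0$, we have $\boldsymbol{\mu}(A_{k})-\boldsymbol{\mu}(A_{k-1})\leq\boldsymbol{\mu}_{1}(A_{k})-\boldsymbol{\mu}_{1}(A_{k-1})$, whence, the right-hand side being positive, $(\boldsymbol{\mu}(A_{k})-\boldsymbol{\mu}(A_{k-1}))^{+}\leq\boldsymbol{\mu}_{1}(A_{k})-\boldsymbol{\mu}_{1}(A_{k-1})$; summing and telescoping gives $\sum(\cdot)^{+}\leq\boldsymbol{\mu}_{1}(A)$, so $\boldsymbol{\mu}^{+}\leq\boldsymbol{\mu}_{1}$, and symmetrically $\boldsymbol{\mu}^{-}\leq\boldsymbol{\mu}_{2}$. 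I expect the only genuinely delicate point to be the systematic use of order completeness to pass from boundedness of these directed families of partial sums to the existence of their suprema (and the translation invariance of suprema used for the Jordan identity); everything else is telescoping and monotonicity.
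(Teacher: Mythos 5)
Your proof is correct, and it is in substance the argument the paper has in mind: the paper itself supplies no details for Lemma \ref{lemmesbv}, deferring to Aumann--Shapley \cite{AS1974}, Ch.~I, \S 4, and what you have written is precisely the vector-valued adaptation of that classical Jordan-decomposition argument, with Dedekind (order) completeness of $E$ --- a standard consequence of the order continuity of the norm, and a fact the paper itself invokes just before Lemma \ref{lemtech} --- playing the role of the completeness of $\mathbb{R}$.

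Two remarks, neither fatal. First, your opening claim that the families of partial sums are ``directed upward by refinement'' is not literally correct: two chains need not admit a common refinement that is again a chain, since sets belonging to different chains need not be comparable. Fortunately you never use directedness proper; all you actually use is that inserting sets into a \emph{single} chain increases its sums (true, by the triangle and lattice inequalities), which suffices for the reduction to chains terminating at $A$, while the existence of the suprema comes from Dedekind completeness, not from directedness. Had you needed genuine directedness --- for instance to prove the companion identity $\left\vert\boldsymbol{\mu}\right\vert=\boldsymbol{\mu}^{+}+\boldsymbol{\mu}^{-}$ stated before the lemma --- the argument would have required more care, but nothing in the lemma does. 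Second, the telescoping identity $\sum_{k}\bigl(\boldsymbol{\mu}(A_{k})-\boldsymbol{\mu}(A_{k-1})\bigr)=\boldsymbol{\mu}(A)$ underlying your Jordan identity in the step $(b)\Rightarrow(c)$ silently uses $\boldsymbol{\mu}(A_{0})=\boldsymbol{\mu}(\emptyset)=0$. This normalization must be a standing hypothesis on the set function (as it is in \cite{AS1974}): without it the implication $(b)\Rightarrow(c)$ is simply false --- for a nonzero constant set function $\boldsymbol{\mu}\equiv c$ one has $\boldsymbol{\mu}^{+}=\boldsymbol{\mu}^{-}=0$, so $(b)$ holds while $(a)$ and $(c)$ fail. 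This is a defect of the lemma's statement (shared with the paper, whose conditions $(a)$ and $(c)$ each force $\boldsymbol{\mu}(\emptyset)=0$ while $(b)$ does not), rather than of your argument; you should simply flag where $\boldsymbol{\mu}(\emptyset)=0$ enters.
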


The details are similar to those presented in \cite{AS1974}, Ch. 1, \S 4.

We are now in a position to state the main result of this section.

\begin{theorem}
\label{thmfinal}Let $X$ be a compact Hausdorff space and $E$ be a Banach
lattice with order continuous norm. Then for every comonotonic additive
operator $I:C(X)\rightarrow E$ with bounded variation there exists a unique
upper continuous set function $\boldsymbol{\mu}:\Sigma_{up}^{+}(X)\rightarrow
E$ with bounded variation such that%
\begin{align}
I(f)  &  =\int_{0}^{+\infty}\boldsymbol{\mu}\left(  \{x:f(x)\geq t\}\right)
\mathrm{d}t\label{genreprformula}\\
&  +\int_{-\infty}^{0}\left[  \boldsymbol{\mu}\left(  \{x:f(x)\geq t\}\right)
-\mu(A)\right]  \mathrm{d}t.\nonumber
\end{align}
for all $f\in C(X).$
\end{theorem}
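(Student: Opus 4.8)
The plan is to transport Theorem~\ref{thmEWgen} across the decomposition produced by Lemma~\ref{lemtech}. I would first write $I=I_{1}-I_{2}$ with $I_{1},I_{2}:C(X)\rightarrow E$ positively homogeneous, translation invariant and monotone, where (as in the proof of Lemma~\ref{lemtech}) $I_{1}(f)=\vee_{0}^{f}I$ for $f\in C(X)_{+}$; by Lemma~\ref{lem-aux-3} both $I_{j}$ are then automatically upper continuous, because $E$ has order continuous norm. The decisive point, which is \emph{not} part of the statement of Lemma~\ref{lemtech}, is that $I_{1}$ and $I_{2}$ are in addition \emph{comonotonic additive}; this is indispensable, since any operator representable as a Choquet--Bochner integral against a single capacity is comonotonic additive by Lemma~\ref{lemgenprop}~$(b)$. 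I expect this to be the main obstacle. It cannot be reduced to the scalar case by composing with functionals $x^{\ast}\in E_{+}^{\ast}$, because $x^{\ast}$ commutes neither with the lattice modulus nor with the supremum that defines the variation. Instead I would argue directly in $E$, following \cite{CMMM2012}, Lemma~14: for comonotone $f,g\in C(X)_{+}$ one has $f=u\circ(f+g)$ and $g=v\circ(f+g)$ with $u,v$ nondecreasing and $u+v=\mathrm{id}$, so the truncation chains $f_{k}=\min\{f,u(s_{k})\}$, $g_{k}=\min\{g,v(s_{k})\}$ attached to a partition $(s_{k})$ of the range of $f+g$ are comonotone and satisfy $f_{k}+g_{k}=\min\{f+g,s_{k}\}$; comonotonic additivity of $I$ then turns the increments into $I(\min\{f+g,s_{k}\})-I(\min\{f+g,s_{k-1}\})=[I(f_{k})-I(f_{k-1})]+[I(g_{k})-I(g_{k-1})]$, and a refinement argument, in which the order continuity of the norm aligns the lattice signs of the two summands, yields $\vee_{0}^{f+g}I=\vee_{0}^{f}I+\vee_{0}^{g}I$.

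Once $I_{1},I_{2}$ are known to be comonotonic additive and monotone, I would remove the normalization hypothesis $I_{j}(1)>0$ of Theorem~\ref{thmEWgen} by a harmless perturbation. Fixing $e\in E_{+}$ with $e>0$, set $J(f)=(\max_{X}f)\,e$; since comonotone functions attain their maximum at a common point, $J$ is monotone, positively homogeneous, translation invariant and comonotonic additive, with $J(1)=e>0$. Because $I_{1}(1)=\vee_{0}^{1}I\geq\left\vert I(1)\right\vert \geq I(1)$ and $I_{2}(1)=I_{1}(1)-I(1)\geq0$, the operators $\widetilde{I}_{j}=I_{j}+J$ are comonotonic additive, monotone and satisfy $\widetilde{I}_{j}(1)>0$. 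Applying Theorem~\ref{thmEWgen} to $\widetilde{I}_{1}$ and $\widetilde{I}_{2}$ furnishes upper continuous vector capacities $\boldsymbol{\mu}_{1},\boldsymbol{\mu}_{2}:\Sigma_{up}^{+}(X)\rightarrow E_{+}$ with $\widetilde{I}_{j}(f)=(\operatorname*{CB})\int_{X}f\,\mathrm{d}\boldsymbol{\mu}_{j}$ for all $f\in C(X)$.

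I would then put $\boldsymbol{\mu}=\boldsymbol{\mu}_{1}-\boldsymbol{\mu}_{2}$, which by Lemma~\ref{lemmesbv} is a set function of bounded variation on $\Sigma_{up}^{+}(X)$ and is upper continuous as a difference of upper continuous capacities. For fixed $f$ the right-hand side of the Choquet--Bochner formula is a Bochner integral of the values $\boldsymbol{\mu}(\{f\geq t\})$ and is therefore \emph{linear} in the underlying set function; subtracting the two representations and using $\widetilde{I}_{1}-\widetilde{I}_{2}=I$ gives the identity \eqref{genreprformula} for every $f\in C(X)$.

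It remains to prove uniqueness, which I would do by recovering $\boldsymbol{\mu}$ from $I$ intrinsically. Let $\boldsymbol{\mu}$ be any upper continuous set function of bounded variation satisfying \eqref{genreprformula}, and fix $K=\{f\geq\alpha\}\in\Sigma_{up}^{+}(X)$ together with the sequence $f_{n}^{K}\downarrow\chi_{K}$ from \eqref{Ury}. For each $t\in(0,1)$ one has $\{f_{n}^{K}\geq t\}=\{f\geq\alpha-(1-t)/n\}\downarrow K$, so upper continuity of $\boldsymbol{\mu}$ and dominated convergence for the Bochner integral (the integrands being norm bounded by $\left\Vert \left\vert \boldsymbol{\mu}\right\vert (X)\right\Vert $ over the bounded range $t\in(0,1)$) give
\[
\boldsymbol{\mu}(K)=\lim_{n\rightarrow\infty}(\operatorname*{CB})\int_{X}f_{n}^{K}\,\mathrm{d}\boldsymbol{\mu}=\lim_{n\rightarrow\infty}I(f_{n}^{K}).
\]
Hence $\boldsymbol{\mu}$ is determined on $\Sigma_{up}^{+}(X)$ by $I$ alone, which proves uniqueness and completes the plan.
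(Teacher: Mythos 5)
Your plan starts where the paper starts (the decomposition $I=I_{1}-I_{2}$ of Lemma \ref{lemtech}), but then turns onto a route whose load-bearing step is exactly the one you leave unproved. You correctly observe that, to apply Theorem \ref{thmEWgen} to $\widetilde{I}_{1},\widetilde{I}_{2}$ as black boxes, you must show that $I_{1}$ and $I_{2}$ are comonotonic additive, and you propose to get this from $\vee_{0}^{f+g}I=\vee_{0}^{f}I+\vee_{0}^{g}I$ for comonotone $f,g\in C(X)_{+}$. Your sketch, however, only controls \emph{truncation} chains $h_{k}=\min\{f+g,s_{k}\}$, for which the increments indeed split. The variation is a supremum over \emph{all} chains $0=h_{0}\leq\cdots\leq h_{n}=f+g$ in $C(X)$, and neither inequality survives for arbitrary chains: the natural splitting $h_{k}=\min\{f,h_{k}\}+(h_{k}-f)^{+}$ need not be a comonotone pair (take $f(x)=10$, $f(y)=1$, $g\equiv1$, $h_{k}(x)=10$, $h_{k}(y)=2$), so comonotonic additivity of $I$ cannot be used to split $I(h_{k})$; and in the reverse direction, concatenating near-optimal chains $(f_{k})$ for $f$ and $(g_{j})$ for $g$ into $0\leq\cdots\leq f\leq f+g_{1}\leq\cdots\leq f+g$ requires $I(f+g_{j})-I(f+g_{j-1})=I(g_{j})-I(g_{j-1})$, i.e., comonotonicity of $f$ with each $g_{j}$, which fails for arbitrary chains. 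The phrase ``a refinement argument, in which the order continuity of the norm aligns the lattice signs of the two summands'' is a placeholder, not a proof: in the scalar case this additivity of the variation is precisely the technical core of \cite{CMMM2012}, and in the vector-valued case one faces in addition the lattice issues (suprema of large families in $E$, non-commutation of suprema with sums and of $x^{\ast}$ with $|\cdot|$) that you yourself invoke to rule out a scalar reduction. So the central step of your proposal is a genuine gap.

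It is worth seeing how the paper's proof is engineered to avoid this step entirely: it never claims, nor needs, that $I_{1},I_{2}$ are comonotonic additive. From Lemma \ref{lemtech} it uses only monotonicity, positive homogeneity, translation invariance and upper continuity, which suffice to run the \emph{construction} from the proof of Theorem \ref{thmEWgen}, namely $\boldsymbol{\mu}_{j}(K)=\lim_{n}I_{j}(f_{n}^{K})$ (that construction uses order continuity of the norm, not comonotonic additivity). The representation (\ref{genreprformula}) is then proved \emph{directly for} $I$: one approximates $\int_{0}^{+\infty}\boldsymbol{\mu}(\{f\geq t\})\,\mathrm{d}t$ by Riemann sums $\sum_{k}\boldsymbol{\mu}(C_{k})(t_{k+1}-t_{k})$, replaces $\boldsymbol{\mu}(C_{k})$ by $I(f_{n}^{C_{k}})$, and exploits the fact that $f_{n}^{C_{i}}(t_{i+1}-t_{i})$ and $\sum_{k>i}f_{n}^{C_{k}}(t_{k+1}-t_{k})$ are comonotone, so that the comonotonic additivity of $I$ itself — the only comonotonic additivity available — splits $I$ across the sum; the error terms are then handled by monotonicity and translation invariance of $I_{1},I_{2}$. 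Your normalization trick $J(f)=(\max_{X}f)\,e$ is a sound observation, and your uniqueness argument matches the paper's, but both sit downstream of the gap. To salvage your route you would have to prove a vector-valued analogue of the CMMM decomposition into monotone \emph{comonotonic additive} parts, which is a substantial theorem in its own right; otherwise the repair is to fall back on the paper's direct approximation argument.
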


\begin{proof}
By Lemma \ref{lemtech}, there exist two functionals $I_{1},I_{2}%
:C(X)\rightarrow E$ that are monotone, translation invariant, positively
homogeneous, upper continuous and such that $I=I_{1}-I_{2}$. Define
$\boldsymbol{\mu}:\Sigma_{up}^{+}(X)\rightarrow E$ by $\boldsymbol{\mu
}=\boldsymbol{\mu}_{1}-\boldsymbol{\mu}_{2},$ where $\boldsymbol{\mu}%
_{1},\boldsymbol{\mu}_{2}$ are associated to the functionals $I_{1}$ and
$I_{2}$ via Theorem \ref{thmEWgen}. Then $\boldsymbol{\mu}$ is upper
continuous and by Lemma \ref{lemmesbv} it also has bounded variation. We now
prove that the representation formula (\ref{genreprformula}) holds.

Suppose that $f\in C(X)_{+}.$ The integral%
\[
\int_{0}^{+\infty}\boldsymbol{\mu}\left(  \{x:f(x)\geq t\}\right)
\mathrm{d}t
\]
is well defined as a Bochner integral (see Lemma \ref{lem6}). Given
$\varepsilon>0,$ one can choose an equidistant division $0=t_{0}<\cdots
<t_{m}=\left\Vert f\right\Vert $ of $[0,\left\Vert f\right\Vert ]$ such that%
\begin{align}
&  \left\Vert \int_{0}^{+\infty}\boldsymbol{\mu}\left(  \{x:f(x)\geq
t\}\right)  \mathrm{d}t-\sum_{k=0}^{m-1}\boldsymbol{\mu}\left(  \{x:f(x)\geq
t_{k}\}\right)  (t_{k+1}-t_{k})\right\Vert \nonumber\\
&  =\left\Vert \int_{0}^{\left\Vert f\right\Vert }\boldsymbol{\mu}\left(
\{x:f(x)\geq t\}\right)  \mathrm{d}t-\sum_{k=0}^{m-1}\boldsymbol{\mu}\left(
\{x:f(x)\geq t_{k}\}\right)  (t_{k+1}-t_{k})\right\Vert <\varepsilon
\label{cond0}%
\end{align}
and $\left\Vert f\right\Vert /m<\varepsilon$.

Denote $C_{k}=\{x:f(x)\geq t_{k}\}$ for $k=0,...,m-1.$ By the definition of
$\boldsymbol{\mu}_{1}$ and $\boldsymbol{\mu}_{2}$ (see the proof of Theorem
\ref{thmEWgen}) one can choose functions $f_{n}^{C_{k}}\in C(X)_{+}$ such that%
\begin{equation}
\left\Vert \boldsymbol{\mu}\left(  C_{k}\right)  -I(f_{n}^{C_{k}})\right\Vert
<\varepsilon/\left\Vert f\right\Vert \text{ and }n^{-1}<\left\Vert
f\right\Vert /m. \label{cond1}%
\end{equation}

Because the functions $f_{n}^{C_{k}}$ are defined by the formula (\ref{Ury})
and $n^{-1}<\left\Vert f\right\Vert /m$, it follows that the functions
$f_{n}^{C_{i}}(t_{i+1}-t_{i})$ and $\sum_{k=i+1}^{m-1}f_{n}^{C_{k}}%
(t_{k+1}-t_{k})$ are comonotonic for all indices $i,$ so that%
\begin{align*}
I(\sum_{k=0}^{m-1}f_{n}^{C_{k}}(t_{k+1}-t_{k}))  &  =I(f_{n}^{C_{0}}%
)(t_{1}-t_{0})+I(\sum_{k=1}^{m-1}f_{n}^{C_{k}}(t_{k+1}-t_{k}))\\
&  =\cdots=\sum_{k=0}^{m-1}I(f_{n}^{C_{k}})(t_{k+1}-t_{k});
\end{align*}
the property of positive homogeneity of $I$ is assured by Lemma \ref{lemtech}.

Notice that%
\[
f(x)\leq\sum_{k=0}^{m-1}f_{n}^{C_{k}}(x)(t_{k+1}-t_{k})\leq f(x)+2\varepsilon
\text{\quad for all }x\in X.
\]
Since the operators $I_{1},I_{2}$ are monotone and translation invariant, it
follows that%
\[
I_{j}(f)\leq I_{j}(\sum_{k=0}^{m-1}f_{n}^{C_{k}}(t_{k+1}-t_{k}))\leq
I_{j}(f)+2\varepsilon I_{j}(1)\text{\quad for }j\in\{1,2\}.
\]
whence%
\begin{equation}
\left\Vert I_{j}(\sum_{k=0}^{m-1}f_{n}^{C_{k}}(t_{k+1}-t_{k}))-I_{j}%
(f)\right\Vert \leq2\varepsilon\left\Vert I_{j}(1)\right\Vert \text{\quad for
}j\in\{1,2\}. \label{cond2}%
\end{equation}

Therefore%

\begin{align*}
&  \left\Vert \int_{0}^{+\infty}\boldsymbol{\mu}\left(  \{x:f(x)\geq
t\}\right)  \mathrm{d}t-I(f)\right\Vert \\
&  \leq\left\Vert \int_{0}^{+\infty}\boldsymbol{\mu}\left(  \{x:f(x)\geq
t\}\right)  \mathrm{d}t-\sum_{k=0}^{m-1}I(f_{n}^{C_{k}})(t_{k+1}%
-t_{k})\right\Vert \\
&  +\left\Vert \sum_{k=0}^{m-1}I(f_{n}^{C_{k}})(t_{k+1}-t_{k})-I(f)\right\Vert
\\
&  \overset{\text{see }(\ref{cond2})}{\leq}\left\Vert \int_{0}^{+\infty
}\boldsymbol{\mu}\left(  \{x:f(x)\geq t\}\right)  \mathrm{d}t-\sum_{k=0}%
^{m-1}I(f_{n}^{C_{k}})(t_{k+1}-t_{k})\right\Vert \\
&  +2\varepsilon\left\Vert I_{1}(1)\right\Vert +2\varepsilon\left\Vert
I_{2}(1)\right\Vert \\
&  \leq\left\Vert \int_{0}^{+\infty}\boldsymbol{\mu}\left(  \{x:f(x)\geq
t\}\right)  \mathrm{d}t-\sum_{k=0}^{m-1}\boldsymbol{\mu}\left(  C_{k}\right)
(t_{k+1}-t_{k})\right\Vert \\
&  +\left\Vert \sum_{k=0}^{n-1}\boldsymbol{\mu}\left(  C_{k}\right)
(t_{k+1}-t_{k})-\sum_{k=0}^{n-1}I(f_{n}^{C_{k}})(t_{k+1}-t_{k})\right\Vert \\
&  +2\varepsilon\left\Vert I_{1}(1)\right\Vert +2\varepsilon\left\Vert
I_{2}(1)\right\Vert \\
&  \overset{\text{see }(\ref{cond0})\&(\ref{cond1})}{\leq}2\varepsilon\left(
1+\left\Vert I_{1}(1)\right\Vert +\left\Vert I_{2}(1)\right\Vert \right)  .
\end{align*}
Since $\varepsilon>0$ was arbitrarily fixed, the above reasoning yields
formula (\ref{genreprformula}) in the case of positive functions.

If $f\notin C(X)_{+},$ then $f+\left\Vert f\right\Vert \in C(X)_{+~}$ and by
the preceding considerations we have%
\begin{multline*}
I(f)+\left\Vert f\right\Vert I(1)=I(f+\left\Vert f\right\Vert )=\int
_{0}^{+\infty}\boldsymbol{\mu}\left(  \{x:f(x)+\left\Vert f\right\Vert \geq
t\}\right)  \mathrm{d}t\\
=\int_{0}^{+\infty}\boldsymbol{\mu}\left(  \{x:f(x)\geq t\}\right)
\mathrm{d}t+\int_{-\left\Vert f\right\Vert }^{0}\boldsymbol{\mu}\left(
\{x:f(x)\geq t\}\right)  \mathrm{d}t\\
=\int_{0}^{+\infty}\boldsymbol{\mu}\left(  \{x:f(x)\geq t\}\right)
\mathrm{d}t+\int_{-\left\Vert f\right\Vert }^{0}\boldsymbol{\mu}\left(
\{x:f(x)\geq t\}\right)  -\boldsymbol{\mu}(X)\mathrm{d}t+\left\Vert
f\right\Vert \boldsymbol{\mu}(X)\\
=\int_{0}^{+\infty}\boldsymbol{\mu}\left(  \{x:f(x)\geq t\}\right)
\mathrm{d}t+\int_{-\infty}^{0}\boldsymbol{\mu}\left(  \{x:f(x)\geq t\}\right)
-\boldsymbol{\mu}(X)\mathrm{d}t+\left\Vert f\right\Vert I(1).
\end{multline*}
The proof of the representation formula (\ref{genreprformula}) is now complete.

As concerns the uniqueness of $\boldsymbol{\mu}$, suppose that
$\boldsymbol{\nu}$ is another upper continuous monotone set function with
bounded variations for which the formula (\ref{genreprformula}) holds. Given a
set $K=\left\{  x:f(x)\geq t\right\}  \in\Sigma_{up}^{+}(X),$ it is known that
the functions $f_{n}^{K}:X\rightarrow\lbrack0,1]$ defined by the formula
(\ref{Ury}) decrease to$\chi_{K}.$ This implies that $(\left\{  x:f_{n}%
^{K}(x)\geq t\right\}  )_{n}$ is decreasing to $K$ for each $t\in\lbrack0,1].$
Consider the sequence of functions
\[
\varphi_{n}:[0,1]\rightarrow E,\text{\quad}\varphi_{n}(t)=\boldsymbol{\nu
}(\left\{  x:f_{n}^{K}(x)\geq t\right\}  ).
\]
Notice that all these functions have bounded variation and their variation is
bounded by the variation of $|\boldsymbol{\nu}|(X).$

By Lebesgue dominated convergence,
\[
\lim_{n\rightarrow\infty}\int_{0}^{1}\varphi_{n}(t)dt=\boldsymbol{\nu}(K).
\]
On other hand, by (\ref{genreprformula}) and the definition of
$\boldsymbol{\mu},$%
\[
\boldsymbol{\mu}(K)=\lim_{n\rightarrow\infty}I(f_{n}^{K})=\lim_{n\rightarrow
\infty}\int_{0}^{1}\varphi_{n}(t)dt=\boldsymbol{\nu}(K),
\]
which ends the proof of the uniqueness.
\end{proof}

\section{Open problems}

We end our paper by mentioning few open problems that might be of interest to
our readers.

\begin{problem}
\label{probl1}Is the order continuity of the norm of $E$ a necessary condition
for the validity of Theorems \ref{thmEWgen} and \ref{thmfinal}?
\end{problem}

As was noticed by Bartle, Dunford and Schwartz \cite{BDS}, much of the theory
of weakly compact linear operators defined on a space $C(X)$ is dominated by
the concept of absolute continuity. For more recent contributions see
\cite{N1975}, \cite{N1979} and \cite{N2009}.

Suppose that $\mathcal{A}$ is a $\sigma$-algebra and $E$ is a Banach lattice
with order continuous norm.

A vector capacity $\boldsymbol{\mu}:\mathcal{A}\rightarrow E_{+}$ is called
absolutely continuous with respect to a capacity $\lambda:\mathcal{A}%
\rightarrow\lbrack0,\infty)$ (denoted $\mu\ll\lambda)$ if for every
$\varepsilon>0$ there is $\delta>0$ such that%
\[
A\in\mathcal{A},\text{ }\lambda(A)<\delta\Longrightarrow\left\Vert
\boldsymbol{\mu}(A)\right\Vert <\varepsilon.
\]
The following lemma extends a result proved by Pettis in the context of
$\sigma$-additive measures.

\begin{lemma}
\label{probl2}If $\mu$ is upper continuous and $\lambda$ is upper continuous
and supermodular then the condition $\mu\ll\lambda$ is equivalent to the
following one:
\[
A\in\mathcal{A}\text{ and }\lambda(A)=0\Longrightarrow\boldsymbol{\mu}(A)=0.
\]

\end{lemma}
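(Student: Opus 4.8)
The plan is to follow Pettis's classical scheme, adapting it to the vector-valued, non-additive setting. One implication is immediate: if $\boldsymbol{\mu}\ll\lambda$ in the $\varepsilon$-$\delta$ sense and $\lambda(A)=0$, then $\left\Vert \boldsymbol{\mu}(A)\right\Vert <\varepsilon$ for every $\varepsilon>0$, whence $\boldsymbol{\mu}(A)=0$. The whole difficulty lies in the converse, which I would establish by contradiction.

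Assume that $\boldsymbol{\mu}\ll\lambda$ fails. Then there are an $\varepsilon>0$ and, for every $n\in\mathbb{N}$, a set $A_{n}\in\mathcal{A}$ with $\lambda(A_{n})<2^{-n}$ and $\left\Vert \boldsymbol{\mu}(A_{n})\right\Vert \geq\varepsilon$. A decisive point is that the argument can be carried out directly with the norm of $\boldsymbol{\mu}$ and its upper continuity \emph{as a vector capacity}, so that no passage to the scalar capacities $x^{\ast}\circ\boldsymbol{\mu}$, and in particular no uniformity in $x^{\ast}\in E_{+}^{\ast}$, is required. Put $B_{n}=\bigcup_{k\geq n}A_{k}$, a nonincreasing sequence with intersection $A=\bigcap_{n}B_{n}$. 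Since $A_{n}\subset B_{n}$, the monotonicity of $\boldsymbol{\mu}$ together with the monotonicity of the norm on $E_{+}$ gives $\left\Vert \boldsymbol{\mu}(B_{n})\right\Vert \geq\left\Vert \boldsymbol{\mu}(A_{n})\right\Vert \geq\varepsilon$; the upper continuity of $\boldsymbol{\mu}$ then yields $\boldsymbol{\mu}(B_{n})\rightarrow\boldsymbol{\mu}(A)$ in norm, so that $\left\Vert \boldsymbol{\mu}(A)\right\Vert \geq\varepsilon$ and, in particular, $\boldsymbol{\mu}(A)\neq0$.

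It remains to check that $\lambda(A)=0$, for then the hypothesis forces $\boldsymbol{\mu}(A)=0$, the desired contradiction. As $\lambda$ is upper continuous, $\lambda(A)=\lim_{n}\lambda(B_{n})$, so everything reduces to proving that $\lambda(B_{n})=\lambda\!\left(\bigcup_{k\geq n}A_{k}\right)\rightarrow0$. This is precisely a countable subadditivity estimate for $\lambda$ along the tails $\bigcup_{k\geq n}A_{k}$, and I expect it to be the main obstacle. Were $\lambda$ submodular and lower continuous, the estimate would be routine: finite subadditivity comes from submodularity, and passing to the limit along the increasing unions $\bigcup_{k=n}^{N}A_{k}$ via lower continuity would give $\lambda(B_{n})\leq\sum_{k\geq n}\lambda(A_{k})<2^{-n+1}$. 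Under the stated hypotheses, however, supermodularity controls $\lambda$ of unions only from below, so the estimate cannot be read off directly; it must instead be extracted from the dual capacity $\overline{\lambda}(S)=\lambda(X)-\lambda(X\setminus S)$, which is submodular and lower continuous, after translating the $\lambda$-null condition on $A$ into a $\overline{\lambda}$-fullness condition on $X\setminus A$. Reconciling this complementation with the subadditivity of $\overline{\lambda}$ is the genuinely delicate step.

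Once $\lambda(B_{n})\rightarrow0$ is secured, the proof closes as above: $\lambda(A)=0$, hence $\boldsymbol{\mu}(A)=0$ by hypothesis, contradicting $\left\Vert \boldsymbol{\mu}(A)\right\Vert \geq\varepsilon$; therefore $\boldsymbol{\mu}\ll\lambda$. The two structural facts about the Banach lattice $E$ that make the vector step go through are the monotonicity of the norm on $E_{+}$ and the generalized Dini lemma (Lemma \ref{lem1} $(e)$), which underlies the norm upper continuity of $\boldsymbol{\mu}$; the subadditivity-type control of the scalar capacity $\lambda$ described above is the only place where real work is needed.
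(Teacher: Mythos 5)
Your skeleton is the right one, and it is surely the argument the paper has in mind (the paper's own ``proof'' is literally the sentence ``The proof is immediate, by reductio ad absurdum'', so there is nothing more to compare against): the easy direction, the witnesses $A_n$ with $\lambda(A_n)<2^{-n}$ and $\left\Vert\boldsymbol{\mu}(A_n)\right\Vert\geq\varepsilon$, the tails $B_n=\bigcup_{k\geq n}A_k\downarrow A$, and the use of monotonicity, the monotone norm on $E_+$ and upper continuity of $\boldsymbol{\mu}$ to get $\left\Vert\boldsymbol{\mu}(A)\right\Vert\geq\varepsilon$ are all correct. But the step you flag as ``genuinely delicate'' --- proving $\lambda(A)=0$ --- is not delicate, it is impossible under the stated hypotheses, and your proposed escape through the dual capacity cannot work. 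Writing $E_k=X\setminus A_k$, the lower bound you would need is $\overline{\lambda}(E_1\cap E_2)\geq\overline{\lambda}(E_1)+\overline{\lambda}(E_2)-\overline{\lambda}(X)$, and unwinding the duality this is exactly $\lambda(A_1\cup A_2)\leq\lambda(A_1)+\lambda(A_2)$, i.e.\ subadditivity of $\lambda$ itself --- precisely what a supermodular capacity lacks; submodularity of $\overline{\lambda}$ bounds $\overline{\lambda}(E_1\cap E_2)$ from \emph{above}, which is useless here.

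In fact the lemma is false as stated, so no repair of the proof exists. Take $X=\{0,1\}^{\mathbb{N}}$ with the product $\sigma$-algebra, let $m$ be a product probability making the events $A_n$ independent with $m(A_n)=\tfrac{1}{2}+\varepsilon_n$, $\varepsilon_n\downarrow 0$, and put $\lambda=(m-\tfrac{1}{2})^{+}$, a convex distortion of $m$, hence supermodular and upper continuous. Let $\mathcal{C}$ be the smallest family of measurable sets containing every $A_n$ that is closed under supersets and under intersections of nonincreasing sequences, and let $\boldsymbol{\mu}=\mathbf{1}_{\mathcal{C}}$, with values in $E=\mathbb{R}$. Then $\boldsymbol{\mu}$ is a capacity, upper continuous precisely because $\mathcal{C}$ is closed under decreasing countable intersections. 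Moreover every member of $\mathcal{C}$ either contains some $A_n$ or has $m$-measure $1$: the family of sets with this property is itself closed under the two generating operations, the decreasing-intersection case being settled by the second Borel--Cantelli lemma (any infinite subfamily of the $A_n$ has limsup of full measure). Hence $m>\tfrac{1}{2}$ on $\mathcal{C}$, so $\lambda(A)=0$ implies $A\notin\mathcal{C}$, i.e.\ $\boldsymbol{\mu}(A)=0$: the null-set condition holds. Yet $\lambda(A_n)=\varepsilon_n\rightarrow 0$ while $\boldsymbol{\mu}(A_n)=1$, so $\boldsymbol{\mu}\ll\lambda$ fails. The correct hypothesis is the one you yourself identified in passing: if $\lambda$ is submodular and lower continuous (hence countably subadditive), then $\lambda(B_n)\leq\sum_{k\geq n}\lambda(A_k)\leq 2^{-n+1}$, so $\lambda(A)=0$ by monotonicity alone, and your argument closes with no further input (upper continuity of $\lambda$ is not even needed). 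So sharpen your diagnosis: the paper's hypotheses on $\lambda$ are dualized by mistake, and its one-line proof does not survive scrutiny.
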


The proof is immediate, by reductio ad absurdum.

\begin{problem}
\label{probl3}Suppose $\boldsymbol{\mu}:\mathcal{A}\rightarrow E_{+}$ is an
upper continuous vector measure. Does there exist a capacity $\lambda
:\mathcal{A}\rightarrow\lbrack0,\infty)$ such that $\mu\ll\lambda?$

If \ Yes, is it possible to choose $\lambda$ of the form $\lambda=x^{\ast
}\circ\boldsymbol{\mu}$ for a suitable $x^{\ast}\in E_{+}^{\ast}?$
\end{problem}

It would be also interesting the following operator analogue of Problem
\ref{probl3}:

\begin{problem}
Does there exist for each Choquet operator $I:C(X)\rightarrow E$ a functional
$x^{\ast}\in E_{+}^{\ast}$ such that for every $\varepsilon>0$ there is
$\delta>0$ with the property%
\[
\left\Vert I(f)\right\Vert \leq\varepsilon\left\Vert f\right\Vert +\delta
x^{\ast}\left(  \left\vert f\right\vert \right)  \text{\quad for all }f\in
C(X)?
\]

\end{problem}

\textbf{Declaration of interests : none.}

\end{document}